\newtheorem{theorem}{Theorem}[section]
\newtheorem{lemma}[theorem]{Lemma}
\newtheorem{corollary}[theorem]{Corollary}
\newtheorem{proposition}[theorem]{Proposition}
\newtheorem{remark}[theorem]{Remark}
\theoremstyle{definition}
\newtheorem{definition}[theorem]{Definition}
\newtheorem{example}[theorem]{Example}
\newtheorem{remark/example}[theorem]{Remark/Example}
 \let\oldlabel=\label
\def\prellabel{\marginparsep=1em\marginparwidth=44pt
 \def\label##1{\oldlabel{##1}\ifmmode\else\ifinner\else
 \marginpar{{\footnotesize\ \\ \tt
 ##1}}\fi\fi}}
\numberwithin{equation}{section}
\def\PP{ {\bf P} }
\def\NN{ {\bf N} }
\def\ZZ{ {\bf Z} }
\def\QQ{ {\bf Q} }
\def\RR{ {\bf R} }
\def\aa{{ a}}
\def\cc{{\bf c}}
\def\ii{{\bf i}}
\def\jj{{\bf j}}
\def\id{id}
\def\ol#1{\overline{#1}}
\newcommand{\Rees}{\operatorname{Rees}}
\newcommand{\ini}{\operatorname{in}}
\newcommand{\GCD}{\operatorname{GCD}}
\newcommand{\mm}{\operatorname{{\mathbf m}}}
\newcommand{\depth}{\operatorname{depth}}
\newcommand{\Tor}{\operatorname{Tor}}
\newcommand{\reg}{\operatorname{reg}}
\newcommand{\gr}{\operatorname{gr}}
\newcommand{\hull}{\operatorname{New}}
\newcommand{\Syz}{\operatorname{Syz}}
\numberwithin{equation}{section}
\begin{document}

\title{Contracted ideals and the Gr\"obner fan of the rational normal curve
}
\author{ Aldo Conca, Emanuela De Negri, Maria Evelina Rossi}
\address{Dipartimento di Matematica, Universit\'a di Genova, Via Dodecaneso 35, I-16146 Genova, Italy }
\email{conca@dima.unige.it, denegri@dima.unige.it, rossim@dima.unige.it }
\subjclass{13A30, 13P10, 13D40}
\date{}
\keywords{Gr\"obner fan, contracted ideals, Rees algebras, rational normal curves, Cohen-Macaulay rings}

\begin{abstract} 
The paper has two goals: the study of the associated graded ring of contracted homogeneous ideals in $K[x,y]$ and the study of the Gr\"obner fan of the ideal $P$ of the rational normal curve in $\PP^d$. These two problems are, quite surprisingly, very tightly related. We completely classify the contracted ideals with a Cohen-Macaulay associated graded rings in terms of the numerical invariants arising from Zariski's factorization. We determine explicitly the initial ideals (monomial or not) of $P$ that are Cohen-Macaulay. 
\end{abstract}
 
\maketitle
 \section{Introduction} 

The goal of the paper is twofold: 
\begin{itemize} 
\item[(1)] to describe the Cohen-Macaulay initial ideals of the defining ideal $P$ of the rational normal curve in $\PP^d$ in its standard coordinate system and for every positive integer $d$. 
\item[(2)] to identify the homogeneous contracted ideals in $K[x,y]$ whose associated graded ring is Cohen-Macaulay. 
\end{itemize} 

The two problems are closely related. Indeed they are essentially equivalent as we explain in the following. Let $K$ be a field, $R=K[x,y]$ and $I$ be a homogeneous ideal of $R$ with $\sqrt I=\mm=(x,y)$. Denote by $\gr_I(R)$ the associated graded ring $\oplus_k I^k/I^{k+1}$ of $I$. The ideal $I$ is said to be contracted if it is contracted from a quadratic extension, that is, if there exists a linear form $z$ in $R$ such that $I=IR[\mm/z]\cap R$. Contracted ideals have been introduced by Zariski in his studies on the factorization property of integrally closed ideals, see \cite[App.5]{ZS} or \cite[Chap.14]{HS}. Every integrally closed ideal $I$ is contracted and has a Cohen-Macaulay associated graded ring $\gr_I(R)$, see \cite{LT,H}. On the contrary, the associated graded ring of a contracted ideal need not be Cohen-Macaulay. Zariski proved a factorization theorem for contracted ideals asserting that every contracted ideal $I$ has a factorization $I=L_1\cdots L_s$ where the $L_i$ are themselves contracted but of a very special kind. In the homogeneous case and assuming $K$ is algebraically closed, each $L_i$ is a lex-segment monomial ideal in a specific system of coordinates depending on $i$. Recall that a monomial ideal $L$ in $R$ is a lex-segment ideal if whenever $x^ay^b\in L$ with $b>0$ then also $x^{a+1}y^{b-1}\in L$. In \cite[Corollary 3.14]{CDJR} it is shown that the Cohen-Macaulayness of $\gr_I(R)$ is equivalent to the Cohen-Macaulayness of $\gr_{L_i}(R)$ for every $i=1,\dots,s$. Therefore to answer (2) one has to characterize the lex-segment ideals $L$ with Cohen-Macaulay associated graded ring. Any lex-segment ideal $L$ of initial degree $d$ can be encoded by a vector $\aa=(a_0,a_1,\dots,a_d)$ with increasing integral coordinates and $a_0=0$. Given $L$ associated to $\aa$, we show that $\gr_L(R)$ is Cohen-Macaulay if and only if $\ini_\aa(P)$ defines a Cohen-Macaulay ring. Here $\ini_\aa(P)$ denotes the ideal of the initial forms of $P$ with respect to the vector $\aa$. Therefore (1) and (2) are indeed equivalent problems. In Section \ref{CM for P} we solve problem (1) by showing first that $P$ has exactly $2^{d-1}$ Cohen-Macaulay initial monomial ideals, see Theorem \ref{riassunto}. Then we show that every Cohen-Macaulay initial ideal of $P$ has a Cohen-Macaulay monomial initial ideal, Theorem \ref{nonmon}. In terms of the Gr\"obner fan of $P$, Theorem \ref{nonmon} can be rephrased by saying that $\ini_\aa(P)$ is Cohen-Macaulay iff $\aa$ belongs to the union of $2^{d-1}$ maximal closed cones. These cones are explicitly described by linear homogeneous inequalities. The fact that $P$ has exactly $2^{d-1}$ Cohen-Macaulay monomial initial ideals can be derived by combining results of Hosten and Thomas \cite{HT} with results of O'Shea and Thomas \cite{OT}, see Remark \ref{Rekha}. In Section \ref{contraCM} we give an explicit characterization, in terms of the numerical invariants arising from Zariski's factorization, of the Cohen-Macaulay property of the associated graded ring to a contracted homogeneous ideal in $K[x,y]$. In Section \ref{HF} we describe the relationship between the Hilbert series of $\gr_L(R)$ and the multigraded Hilbert series of $\ini_\aa(P)$. We discuss also how the formulas for the Hilbert series and polynomials of $\gr_L(R)$ change by varying the corresponding cones of the Gr\"obner fan of $P$. This has a conjectural relation with the hypergeometric Gr\"obner fan introduced by Saito, Sturmfels and Takayama in \cite{SST}. In Section \ref{bigcone} we show that the union of a certain subfamily of the $2^{d-1}$ Cohen-Macaulay cones is itself a cone. We call it the big Cohen-Macaulay cone. Indeed, the big Cohen-Macaulay cone is the union of $f_{d}$ Cohen-Macaulay cones of the Gr\"obner fan of $P$, where $f_{d}$ denotes the $(d+1)$-th Fibonacci number. In Section \ref{smalld} we present some examples. 

We thank J\"urgen Herzog, Rekha Thomas, Greg Smith and Bernd Sturmfels for useful discussions and references related to the topic of the present paper. 
Most of the results of this paper have been discovered and confirmed by extensive CoCoA \cite{CoCoA} computations.

\section{Notation and preliminaries} 
\label{notpre} 
Let $S$ be a polynomial ring over a field $K$ with maximal homogeneous ideal $\mm$. 
For a homogeneous ideal $I$ of $S$ we denote by $\gr_I(S), \Rees(I)$ and $F(I)$ respectively the associated graded ring $\oplus_{k\in \NN} I^k/I^{k+1}$ , the Rees algebra $\oplus_{k\in \NN} I^k$ and the fiber cone $\oplus_{k\in \NN} I^k/\mm I^{k}$ of $I$. By the very definition $F(I)$ is a standard graded $K$-algebra. Furthermore $\Rees(I)$ can be identified with the $S$-subalgebra of the polynomial ring $S[t]$ generated by $ft$ with $f\in I$. 
 
Let $I\subset S=K[x_1,\dots, x_n]$ be a homogeneous ideal. We may consider the (standard) Hilbert function, Hilbert polynomial and Hilbert series of $S/I$. The Hilbert series of $S/I$ is $\sum_{i\geq 0} \dim_K (S/I)_i z^i$ and we denote it by $H_{S/I}(z)$. The series $H_{S/I}(z)$ has a rational expression $h(z)/(1-z)^{d}$ where $h(z) \in \ZZ[z]$ and $d$ is the Krull dimension of $S/I$. The polynomial $h(z) $ is called the (standard) $h$-polynomial of $S/I$. In particular, $h(0)=1$ and $h(1)$ is the ordinary multiplicity of $S/I$ denoted by $e(S/I)$.

If $I$ is $\mm$-primary, we may consider also the (local) Hilbert functions, Hilbert polynomials and Hilbert series of $I$ (or of $\gr_I(S)$). 
There are two Hilbert functions associated with $I$ in this context. We denote them by $H(I,k)$ and $H^1(I,k)$ and are defined by 
$$H(I,k)= \dim_K (I^k/I^{k+1}) \quad \mbox{ and } \quad H^1(I,k)=\dim_K (S/I^{k+1}).$$
The corresponding Hilbert series are: 
$$H_I(z)=\sum_{k\geq 0} H(I,k) z^k \quad \mbox{ and } \quad H^1_I(z)=\sum_{k\geq 0} H^1(I,k) z^k.$$ 
Obviously, $H_I(z)=(1-z)H^1_I(z)$. The series $H^1_I(z)$ has a rational expression 
$$H^1_I(z)=\frac{h(z)}{(1-z)^{n+1}}$$
where $h(z)$ is a polynomial with integral coefficients called the (local) $h$-polynomial of $I$ or of $\gr_I(S)$. The Hilbert functions $H(I,k)$ and $H^1(I,k)$ agree for large $k$ with polynomials $P_I(z)$ and $P^1_I(z)$ at $z=k$. The polynomials $P_I(z)$ and $P^1_I(z)$ are called the Hilbert polynomials of $I$. Their coefficients, with respect to an appropriate binomial basis, are integers called Hilbert coefficients of $I$ and are denoted by $e_i(I)$. Precisely, 
$$P^1_I(z)=\sum_{i=0}^n (-1)^i e_{i}(I) \binom{n-i+z}{n-i}.$$
 In particular, $h(0)=\dim_K S/I$ and $h(1)=e_0(I)$ is the multiplicity of $I$. 

\begin{definition} Let $I\subset S=K[x_1,\dots, x_n]$ be a homogeneous ideal of codimension $c$ and not containing linear forms. 
Then: 
\begin{itemize}
\item[(1)] $S/I$ has minimal multiplicity if $e(S/I)=c+1$. 
\item[(2)] $S/I$ has a short $h$-vector if its $h$-polynomial is $1+cz $, that is, if the Hilbert series of $S/I$ is $(1+cz)/(1-z)^{n-c}$. 
\end{itemize} 
\end{definition} 

We denote by $\reg(M)$ the Castelnuovo-Mumford regularity of a graded $S$-module $M$. For results on the Castelnuovo-Mumford regularity and minimal multiplicity we refer the reader to \cite{EG}.
We just recall that if $S/I$ has a short $h$-vector, then it has minimal multiplicity. On the other hand, if $S/I$ is Cohen-Macaulay with minimal multiplicity, then it has a short $h$-vector. 
We will need the next lemma whose easy proof follows from standard facts. 

\begin{lemma}
\label{CMvsCM} Let $I\subset S$ be a homogeneous ideal. Assume $S/I$ has a short $h$-vector. Then $S/I$ is Cohen-Macaulay iff $\reg(I)=2$. \end{lemma}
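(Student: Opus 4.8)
The plan is to pass from $I$ to $S/I$ via the identity $\reg(I)=\reg(S/I)+1$, so that the assertion becomes: $S/I$ is Cohen--Macaulay if and only if $\reg(S/I)=1$. Throughout I would fix $c=\operatorname{codim}(I)$ and assume $I\neq 0$ (so $c\geq 1$), in which case the short $h$-vector hypothesis reads $h_{S/I}(z)=1+cz$; and I would recall that, by the Auslander--Buchsbaum formula, $S/I$ is Cohen--Macaulay exactly when $\projdim_S(S/I)=c$.

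For the implication ``$S/I$ Cohen--Macaulay $\Rightarrow\reg(I)=2$'' it suffices to see that $\reg(S/I)=1$. Here I would simply invoke that a Cohen--Macaulay graded quotient has regularity equal to the degree of its $h$-polynomial (see \cite{EG}); since that degree is $1$, we are done. (If one prefers to argue from scratch: after enlarging $K$ if necessary, reduce modulo a general maximal linear regular sequence to an Artinian algebra with Hilbert series $1+cz$, whose regularity is visibly $1$, and use that such a reduction does not change regularity.)

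For the converse, assume $\reg(S/I)=1$. Since $I$ contains no linear forms, $\beta_{1,1}(S/I)=0$, while $\reg(S/I)=1$ forces $\beta_{1,j}(S/I)=0$ for $j\geq 3$; hence $I$ is generated in degree $2$. Using minimality of the graded free resolution of $S/I$ one shows $\beta_{i,j}(S/I)=0$ for $j<i+1$ whenever $i\geq 1$, and combined with $\reg(S/I)=1$ (which gives $\beta_{i,j}(S/I)=0$ for $j>i+1$) this pins the minimal graded free resolution of $S/I$ down to the pure shape
\[
0\longrightarrow S(-p-1)^{b_p}\longrightarrow\cdots\longrightarrow S(-3)^{b_2}\longrightarrow S(-2)^{b_1}\longrightarrow S\longrightarrow S/I\longrightarrow 0,
\]
with $p=\projdim_S(S/I)$ and $b_1,\dots,b_p>0$. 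I expect this step --- extracting the exact shape of the resolution from $\reg(S/I)=1$ and the absence of linear forms --- to be the only delicate point; the rest is bookkeeping.

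Finally I would read the Hilbert series off this resolution: $(1-z)^n H_{S/I}(z)=1+\sum_{i=1}^p(-1)^i b_i z^{i+1}$. The monomials on the right-hand side occur in pairwise distinct degrees ($0,2,3,\dots,p+1$), so no cancellation takes place and the polynomial has degree exactly $p+1$, with nonzero leading coefficient $(-1)^p b_p$. On the other hand $(1-z)^n H_{S/I}(z)=(1-z)^c\, h_{S/I}(z)=(1-z)^c(1+cz)$ has degree $c+1$. Comparing degrees gives $p=c$, hence $\projdim_S(S/I)=\operatorname{codim}(I)$, and $S/I$ is Cohen--Macaulay by Auslander--Buchsbaum, completing the equivalence.
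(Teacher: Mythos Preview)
Your argument is correct. The paper itself does not give a proof of this lemma, stating only that ``the easy proof follows from standard facts,'' so there is no approach to compare against; your write-up supplies exactly the kind of standard argument the authors had in mind. The forward direction is the well-known fact (from \cite{EG}) that for a Cohen--Macaulay graded quotient the regularity equals the degree of the $h$-polynomial. For the converse, your route via the pure $2$-linear shape of the minimal free resolution and the resulting degree comparison in $(1-z)^n H_{S/I}(z)=(1-z)^c(1+cz)$ is clean and correct; note that for the degree count you only need $b_p>0$, which is automatic from $p=\projdim_S(S/I)$, so the claim that all $b_i>0$ (while true by minimality) is not essential to the argument.
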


Every vector $\aa=(a_0,\dots,a_d)\in \QQ_{\geq 0}^{d+1}$ induces a graded structure on the polynomial ring $K[t_0,\dots,t_d]$ by putting $\deg t_i=a_i$. Every monomial $t^\alpha$ is then homogeneous of degree 

$$\deg_{\aa} t^\alpha=\aa\alpha=\sum_{i=0}^d a_i\alpha_i.$$
For every non-zero polynomial $f=\sum_{i=1}^k \lambda_i t^{\alpha_i}$ we set $\deg_\aa(f)=\max\{ \aa \alpha_i : \lambda_i\neq 0\} $ and 
$\ini_\aa(f)=\sum_{ \aa \alpha_i=\deg_\aa(f)} \lambda_i t^{\alpha_i}$. Then for every ideal $I$ one defines the initial ideal $\ini_\aa(I)$ of $I$ with respect to $\aa$ to be: 

$$\ini_\aa(I)=(\ini_\aa(f) : f\in I, \ \ f\neq 0).$$

Similarly, given a term order $\tau$, we denote by $\ini_\tau(I)$ the ideal of the initial monomials of elements of $I$. Given $\aa\in \QQ_{\geq 0}^{d+1}$ the term order defined by 
$$t^\alpha\geq t^\beta \mbox { iff }\aa\alpha>\aa\beta \mbox{ or } (\aa\alpha=\aa\beta \mbox{ and } t^\alpha\geq t^\beta \mbox{ wrt } \tau)$$
is denoted by $\tau a$. 

One easily shows that $\ini_\tau(\ini_\aa(I))=\ini_{\tau\aa}(I)$. Hence $\ini_\aa(I)$ and $I$ have a common monomial initial ideal. This shows part (1) of the following lemma. 

\begin{lemma} Let $I$ be a homogeneous ideal with respect to the ordinary grading $\deg t_i=1$ and let $\aa\in \QQ_{\geq 0}^{d+1}$. Then 
\begin{itemize} 
\item[(1)] $S/I$ and $S/\ini_\aa(I)$ have the same Hilbert function. 
\item[(2)] $ \depth S/\ini_\aa(I)\leq \depth S/I$.
\end{itemize} 
\end{lemma}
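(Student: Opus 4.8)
The plan is to prove the two parts separately, using the standard "flat degeneration" picture that the preceding discussion already set up: for a term order $\tau$ refining $\aa$ one has $\ini_\tau(\ini_\aa(I))=\ini_{\tau\aa}(I)$, so $I$, $\ini_\aa(I)$ and the monomial ideal $J:=\ini_{\tau\aa}(I)$ all sit in one picture, with $J$ a monomial initial ideal of both $I$ and $\ini_\aa(I)$.

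For part (1): I would invoke the usual fact that passing to an initial ideal with respect to a weight vector preserves the Hilbert function. Concretely, the vector $\aa$ together with the reference grading $\deg t_i = 1$ defines a one-parameter flat family (a $K[s]$-flat subalgebra of $K[s,t_0,\dots,t_d]$, or equivalently a homogeneous deformation via the extended weight $(\aa,1)$) whose special fiber is $S/\ini_\aa(I)$ and whose general fiber is $S/I$. Since $I$ is homogeneous for $\deg t_i=1$ and $\ini_\aa$ only regroups monomials of the same ordinary degree, the ideal $\ini_\aa(I)$ is again homogeneous for the ordinary grading and, degree by degree, $\dim_K(S/I)_j = \dim_K(S/\ini_\aa(I))_j$ by upper-semicontinuity together with flatness (equivalently: a Gröbner-basis argument shows the standard monomials of $I$ and of $\ini_\aa(I)$ with respect to $\tau\aa$ coincide). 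This gives (1).

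For part (2): the depth inequality follows by chaining the monomial case with (1). One has the classical semicontinuity of depth under Gröbner degeneration, $\depth S/\ini_{\tau'}(M) \le \depth S/M$, for any term order $\tau'$; applying it with $M=\ini_\aa(I)$ and $\tau'=\tau$ gives $\depth S/J \le \depth S/\ini_\aa(I)$, and applying it with $M=I$ and $\tau'=\tau\aa$ gives $\depth S/J \le \depth S/I$. That alone is not quite the assertion. Instead, the right route is: the extended weight $(\aa,1)$ realizes $S/\ini_\aa(I)$ as the special fiber of a flat family over $\mathbf{A}^1$ with general fiber $S/I$; for a flat family of graded modules, depth of the special fiber is at most depth of the generic fiber (equivalently, one compares via local cohomology $H^i_\mm$, whose Hilbert functions are semicontinuous in the family, so vanishing of $H^i_\mm$ for the generic fiber in low degrees forces the corresponding partial vanishing for the special fiber, whence $\depth$ can only drop). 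I would phrase this using the standard reference fact that $\depth S/\ini_\aa(I) \le \depth S/I$, which is exactly the weight-vector analogue of the term-order statement and is proved the same way.

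The only mildly delicate point — the "main obstacle", such as it is — is making the flatness of the $(\aa,1)$-degeneration explicit when $\aa$ has rational (not integer) entries; but this is harmless, since one may clear denominators and replace $\aa$ by a positive integer multiple without changing $\ini_\aa(I)$, reducing to the integral-weight case where the Rees-type deformation $\bigoplus_{k} (\ini_{\ge k})$ is manifestly $K[s]$-flat. After that reduction both (1) and (2) are the textbook statements about homogeneous Gröbner degenerations, so no further calculation is needed.
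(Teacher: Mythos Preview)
Your proposal is correct and follows essentially the same route as the paper. The paper proves (1) by the ``common monomial initial ideal'' observation you set up at the start (and mention parenthetically), rather than by invoking the flat family directly; for (2) it simply cites the standard one-parameter flat family argument, exactly as you do after discarding the inconclusive chaining-through-$J$ attempt.
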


Part (2) follows from the standard one-parameter flat family argument, see \cite[Chap.15]{E} or \cite{BC} for details.

\begin{definition} \label{P} 
Let $P$ be the ideal of the rational normal curve of $\PP^d$ in its standard embedding, i.e. $P$ is the
kernel of the $K$-algebra map $S=K[t_0, t_1,\dots , t_d]\to K[x,y]$ sending $t_i$ to $x^{d-i}y^i$. 
\end{definition} 
The ideal $P$ is minimally generated by the $2$-minors of the matrix 

$$T_d=\left( \begin{array}{cccccc}
t_0 & t_1 & t_2 & \dots &\dots & t_{d-1} \\
t_1 & t_2 & \dots &\dots & t_{d-1}& t_{d} 
\end{array} 
\right) $$
and it contains the binomials of the form 
$$t_{i_1}t_{i_2}\cdots t_{i_k}- t_{j_1}t_{j_2}\cdots t_{j_k}$$ 

with 

$$0\leq i_v, j_v \leq d \quad \mbox{ and } \quad i_1+i_2+\cdots+ i_k=j_1+j_2+\cdots+ j_k.$$

The Hilbert series of $S/P$ is: 

$$\frac{ 1+(d-1)z}{(1-z)^2}.$$

\section{Contracted ideals in dimension $2$} \label{contra2} 
 
We briefly recall from \cite[App.5]{ZS}, \cite[Chap.14]{HS} and \cite{CDJR} few facts about contracted ideals. As we deal only with homogeneous ideals, we will state the results in the graded setting. 

Assume $K$ is an algebraically closed field. Let $R=K[x,y]$ and denote by $\mm$ its maximal homogeneous ideal. 
An $\mm$-primary homogeneous ideal $I$ of $R$ is said to be contracted if it is contracted from a quadratic extension, that is, if there exists a linear form $z\in R$ such that $I=IS\cap R,$ where $S=R[\mm/z]$. The property of being contracted can be described in several ways, see \cite[Prop.3.3]{CDJR} for instance. To a contracted ideal $I$ one associates a form, the characteristic form of $I$, defined as $\GCD(I_d)$ where $d$ is the initial degree of $I$. For our goals, it is important to recall the following definition and theorem.

\begin{definition} 
\label{Smodu}
Let $I$ be a homogeneous $\mm$-primary ideal in $R$ and let $Q=IR_{\mm}$. Let $J\subset R_{\mm}$ be a minimal reduction of $Q$. The deviation of $I$ is the length of $Q^2/JQ$. 
It will be denoted it by $V(I)$. 
\end{definition}

\begin{theorem} 
\label{redunum}
Let $I$ be a homogeneous $\mm$-primary ideal in $R$. One has: 
\begin{itemize} 
\item[(1)] $\gr_I(R)$ is Cohen-Macaulay if and only if $V(I)=0$. 
\item[(2)] $V(I)=e_0(I)-\dim_K (R/I^2)+2\dim_K(R/I)$. 
\end{itemize} 
 \end{theorem}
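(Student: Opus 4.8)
The plan is to pass to the local ring $R_{\mm}$, a two-dimensional regular (hence Cohen--Macaulay) local ring, so that a minimal reduction $J=(a,b)$ of $Q=IR_{\mm}$ is a parameter ideal and $a,b$ is a regular sequence, and $\gr_{Q}(R_{\mm})=\gr_{I}(R)$. For finite-length $R_{\mm}$-modules supported at $\mm$ I identify $\length(\,\cdot\,)$ with $\dim_{K}(\,\cdot\,)$. I would first prove (2) by a direct length count, and then obtain (1) from it together with the Valabrega--Valla criterion.

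For (2): since $J\subseteq Q$ we have $JQ\subseteq Q^{2}$, hence
\[
V(I)=\length(Q^{2}/JQ)=\length(R_{\mm}/JQ)-\length(R_{\mm}/Q^{2})=\length(R_{\mm}/JQ)-\dim_{K}(R/I^{2}).
\]
Splitting $\length(R_{\mm}/JQ)=\length(R_{\mm}/J)+\length(J/JQ)$, the first term equals $e_{0}(J)=e_{0}(Q)=e_{0}(I)$, because the multiplicity of a parameter ideal generated by a regular sequence equals its colength and a reduction preserves multiplicity. For the second term I would use the minimal free resolution $0\to R_{\mm}\to R_{\mm}^{2}\xrightarrow{(a,b)}J\to 0$, whose first syzygy is generated by $(-b,a)$: tensoring with $R_{\mm}/Q$ annihilates this syzygy (its entries lie in $Q$), so $J/JQ\cong(R_{\mm}/Q)^{2}$ and $\length(J/JQ)=2\dim_{K}(R/I)$. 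Adding up gives $V(I)=e_{0}(I)+2\dim_{K}(R/I)-\dim_{K}(R/I^{2})$.

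For (1), suppose first $V(I)=0$, so $Q^{2}=JQ$; then $Q^{n+1}=J^{n}Q\subseteq J$ and hence $J\cap Q^{n+1}=Q^{n+1}=JQ^{n}$ for all $n\geq 0$, and by Valabrega--Valla $\gr_{Q}(R_{\mm})=\gr_{I}(R)$ is Cohen--Macaulay. Conversely, suppose $\gr_{I}(R)$ is Cohen--Macaulay and let $h(z)=\sum_{i}h_{i}z^{i}$ be its local $h$-polynomial; then $h_{i}\geq 0$ for all $i$ (it is the Hilbert series of the Artinian quotient of $\gr_{I}(R)$ by a regular sequence of two linear forms). Because the ambient dimension is $2$, we have $e_{2}(I)=\sum_{i\geq 2}\binom{i}{2}h_{i}$; combined with the vanishing $e_{2}(I)=0$, which holds for every $\mm$-primary ideal of a two-dimensional regular local ring, this forces $h_{i}=0$ for $i\geq 2$. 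So $h(z)=h_{0}+h_{1}z$, whence $\dim_{K}(R/I)=h_{0}$, $\dim_{K}(R/I^{2})=3h_{0}+h_{1}$ and $e_{0}(I)=h_{0}+h_{1}$, and part (2) gives $V(I)=(h_{0}+h_{1})+2h_{0}-(3h_{0}+h_{1})=0$.

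The step I expect to be the real obstacle is the vanishing $e_{2}(I)=0$: this is precisely where the regularity — not merely the Cohen--Macaulayness — of $R$ is used (over the homogeneous coordinate ring of a smooth plane cubic, $\gr_{\mm}$ is Cohen--Macaulay but $e_{2}(\mm)=1$, so there $V(\mm)=1\neq 0$ while $\gr_{\mm}$ is Cohen--Macaulay). I would import this vanishing from the numerical theory of two-dimensional rational singularities, or from \cite{CDJR}; the rest is routine bookkeeping with lengths and Hilbert series.
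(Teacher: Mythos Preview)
The paper does not give a proof here; it simply cites \cite[Prop.~2.6, Thm.~A]{HM} for (1) and \cite[Lemma~1]{V} for (2). Your argument for (2) is correct and is essentially Valla's computation. The forward implication in (1) is also fine: $V(I)=0$ means $Q^{2}=JQ$, hence reduction number at most one, which forces $\gr_{I}(R)$ to be Cohen--Macaulay in a two-dimensional Cohen--Macaulay local ring.

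The gap is in the converse. You assert that $e_{2}(I)=0$ for \emph{every} $\mm$-primary ideal of a two-dimensional regular local ring and plan to import this from the theory of rational surface singularities. That vanishing, however, is a theorem about \emph{integrally closed} ideals, not arbitrary ones, and your universal claim is false. The paper's own Section~\ref{smalld} supplies counterexamples: for $d=3$ the lex-segment ideal $L\subset K[x,y]$ with $\aa=(0,2,6,7)$ lies in the non-Cohen--Macaulay cone labelled (e) there, and its local $h$-polynomial is $15+4z+z^{2}$, so $e_{2}(L)=\binom{2}{2}\cdot 1=1\neq 0$. What you actually need is the weaker statement ``$\gr_{I}(R)$ Cohen--Macaulay $\Rightarrow e_{2}(I)=0$'', but proving that directly is tantamount to proving $V(I)=0$, so the argument as written is circular.

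A clean repair combines Brian\c{c}on--Skoda with the Valabrega--Valla criterion you already invoke. In a regular ring, Brian\c{c}on--Skoda gives $\overline{J^{2}}\subseteq J$ for any two-generated ideal $J$; since $Q\subseteq\overline{J}$ one obtains $Q^{2}\subseteq\overline{J}^{\,2}\subseteq\overline{J^{2}}\subseteq J$. Now if $\gr_{I}(R)$ is Cohen--Macaulay, choose the minimal reduction $J=(a,b)$ so that $a^{*},b^{*}$ is a regular sequence in $\gr_{I}(R)$; Valabrega--Valla then gives $J\cap Q^{2}=JQ$, and together with $Q^{2}\subseteq J$ this yields $Q^{2}=J\cap Q^{2}=JQ$, i.e.\ $V(I)=0$. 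This is precisely the point at which regularity of $R$ enters, replacing your appeal to $e_{2}$.
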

 
 See \cite[Prop.2.6, Thm.A]{HM} for a proof of (1) and \cite[Lemma 1]{V} for a proof of (2). 
Similar results are proved also in \cite{Ver}. 

We recall now Zariski's factorization theorem for contracted ideals and a related statement, \cite[Cor.3.14]{CDJR}, concerning associated graded rings. In our setting they can be stated as follows. 
 
\begin{theorem} 
\label{ZaGr}
\begin{itemize}
\item[(1)] Any contracted ideal $I$ has a factorization $I=L_1\cdots L_s$ where the $L_i$ are homogeneous $\mm$-primary contracted ideals with characteristic form of type $\ell_i^{\alpha_i}$ for pairwise linearly independent linear forms $\ell_1,\dots,\ell_s$. 
\item[(2)] With respect to the factorization in (1) one has 
$$\depth \gr_I(R)=\min\{\depth \gr_{L_i}(R) : i=1,\dots,s\}.$$
\end{itemize} 
 \end{theorem}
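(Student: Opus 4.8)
The statement to prove is Theorem~\ref{ZaGr}, Zariski's factorization theorem for contracted homogeneous ideals together with the depth formula for the associated graded ring. Since the excerpt explicitly says these results "can be stated as follows" and cites \cite[App.5]{ZS}, \cite[Chap.14]{HS} and \cite[Cor.3.14]{CDJR}, my plan is to recall rather than reprove: the point is to package the classical theory in the homogeneous/graded language used in this paper. So the "proof" I would write is mostly a guided tour of where each piece comes from.

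For part (1), the plan is: first observe that $I$ contracted means $I = IS \cap R$ for $S = R[\mm/z]$, and recall (e.g. from \cite[Prop.3.3]{CDJR}) the characterization of contractedness in terms of the characteristic form $\GCD(I_d)$, $d$ the initial degree. Over an algebraically closed field this GCD factors as a product of powers of distinct linear forms, $\ell_1^{\alpha_1}\cdots\ell_s^{\alpha_s}$. I would then invoke Zariski's structure theorem: a contracted ideal decomposes uniquely, up to order, as a product $I = L_1\cdots L_s$ of contracted ideals $L_i$ whose characteristic forms are the pure powers $\ell_i^{\alpha_i}$, the $\ell_i$ pairwise linearly independent — this is exactly \cite[App.5]{ZS} (see also \cite[Chap.14]{HS}). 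The only thing to check is that all participants are $\mm$-primary and homogeneous, which follows because $I$ is and because the factorization respects the grading (the characteristic form is homogeneous). I would not reprove Zariski's theorem itself; I'd cite it and spell out the translation into the homogeneous setting.

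For part (2), the depth formula, the plan is to quote \cite[Cor.3.14]{CDJR} directly: for a factorization as in (1) into contracted ideals with pairwise coprime (pure-power) characteristic forms, one has $\depth\gr_I(R) = \min_i \depth\gr_{L_i}(R)$. If I wanted to indicate the idea rather than just cite, I would point to the mechanism behind it: the $\ell_i$ being pairwise independent forces the products $L_i$ to behave like a "transversal" product, so that there is an exact sequence (or a filtration) relating $\gr_I(R)$ to the $\gr_{L_i}(R)$, and depth of the middle term is the minimum of the depths of the pieces. But since the excerpt permits assuming earlier-cited results, the clean move is: "This is \cite[Cor.3.14]{CDJR}."

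The main obstacle, honestly, is not mathematical difficulty but scope: Zariski's factorization theorem is a substantial classical result whose full proof (via complete ideals, the theory of the Zariski monoid of integrally closed ideals, transform/adjacent point arguments) is well beyond what belongs here. So the real work is curatorial — making sure the hypotheses ($K = \ol K$, homogeneous, $\mm$-primary, "contracted from a quadratic extension" in the precise sense of the definition above) line up exactly with what is proved in \cite{ZS} and \cite{HS}, and that the notion of characteristic form $\GCD(I_d)$ used here is the homogeneous shadow of the classical one. Once that alignment is verified, both parts are immediate citations, so I would keep the written proof to a short paragraph that does exactly this bookkeeping and then refers the reader to \cite{ZS}, \cite{HS} and \cite{CDJR} for the proofs.
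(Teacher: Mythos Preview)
Your proposal is correct and matches the paper exactly: Theorem~\ref{ZaGr} is stated there without proof, being introduced as a recall of Zariski's factorization theorem from \cite[App.5]{ZS}, \cite[Chap.14]{HS} together with \cite[Cor.3.14]{CDJR} for the depth formula. Your plan to cite rather than reprove, with only the bookkeeping of aligning the homogeneous hypotheses, is precisely what the paper does.
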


\begin{lemma}
\label{Fshort} 
The fiber cone $F(I)$ of a contracted ideal $I$ has a short $h$-vector. Its Hilbert series is
$(1+(d-1)z)/(1-z)^2$, where $d$ is the initial degree of $I$. 
\end{lemma}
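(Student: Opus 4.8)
The plan is to compute the Hilbert function of the fiber cone $F(I)=\bigoplus_{k\ge 0}I^k/\mm I^k$ directly. Since $F(I)$ is a standard graded $K$-algebra, its $k$-th graded component $I^k/\mm I^k$ has dimension $\mu(I^k)$, the minimal number of generators of the ideal $I^k$. Hence it suffices to show
$$\mu(I^k)=dk+1 \qquad \text{for all } k\ge 0,$$
because this yields $H_{F(I)}(z)=\sum_{k\ge 0}(dk+1)z^k=\dfrac{1+(d-1)z}{(1-z)^2}$.

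The crucial point is that powers of a contracted ideal are again contracted. Recall from \cite[App.5]{ZS} and \cite[Chap.14]{HS} that for an $\mm$-primary homogeneous ideal $J$ of $R$ of initial degree $e$ one always has $\mu(J)\le e+1$, with equality if and only if $J$ is contracted, and moreover that the $\mm$-primary ideals contracted from a fixed quadratic extension $R[\mm/z]$ are closed under products. So, writing $I=IR[\mm/z]\cap R$, each power satisfies $I^k=I^kR[\mm/z]\cap R$ and is therefore contracted. As $I$ has initial degree $d$ and $R$ is a domain, $I^k$ has initial degree exactly $dk$. Hence $\mu(I^k)=dk+1$, which is the desired equality, and the Hilbert series of $F(I)$ is as asserted.

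It remains to see that the $h$-polynomial $1+(d-1)z$ is precisely the one required by the definition of a short $h$-vector. The fiber cone $F(I)$ has Krull dimension equal to the analytic spread of the $\mm$-primary ideal $I$, which is $2$, while $\dim_K F(I)_1=\mu(I)=d+1$; thus $F(I)$ is a standard graded quotient of a polynomial ring in $d+1$ variables by an ideal generated in degrees $\ge 2$, of codimension $(d+1)-2=d-1$. Therefore $1+(d-1)z=1+(\operatorname{codim}F(I))\,z$, which is exactly the short $h$-vector condition.

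The only step requiring real care is the closure of the class of ideals contracted from $R[\mm/z]$ under products, equivalently the identity $I^kR[\mm/z]\cap R=I^k$: the inclusion $\subseteq$ is immediate from $IR[\mm/z]\cap R=I$, whereas the reverse inclusion rests on $\mm R[\mm/z]$ being principal (generated by $z$), which is the mechanism behind Zariski's factorization theory and is recorded in \cite[Chap.14]{HS}. Alternatively, one could bypass this by analyzing $F(I)$ as a finite module over the polynomial subring generated by a minimal reduction of $I$, but the route above is shorter.
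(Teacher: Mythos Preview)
Your proof is correct and follows essentially the same route as the paper: both argue that powers of a contracted ideal are contracted, that a contracted ideal of initial degree $e$ has exactly $e+1$ minimal generators, and hence $\mu(I^k)=dk+1$, giving the claimed Hilbert series. Your version is simply more detailed, explicitly checking the short $h$-vector condition via the codimension of $F(I)$; one small slip in your final paragraph is that the inclusion $I^k\subseteq I^kR[\mm/z]\cap R$ is the trivial one, not the other way around, but this does not affect the argument.
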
 
\begin{proof} A contracted ideal of initial degree $d$ is minimally generated by $d+1$ elements and products of contracted ideals are contracted. The initial degree of $I^k$ is $kd$. Hence $I^k$ has $kd+1$ minimal generators. It follows that the Hilbert series of $F(I)$ is $(1+(d-1)z)/(1-z)^2$. 
\end{proof}

A monomial $\mm$-primary ideal $I$ of $R=K[x,y]$ can be encoded in various ways. We use the following. Let $d\in \NN$ be such that $x^d\in I$ and, for $i=0,\dots, d,$ set $a_i(I)=\min\{ j : x^{d-i}y^j\in I \}$. Then we have $0=a_0(I)\leq a_1(I) \leq \dots \leq a_d(I)$. Obviously, the map 
$$I\to \aa=(a_0(I),\dots,a_d(I))$$ 
establishes a bijective correspondence between the set of $\mm$-primary monomial ideals containing $x^d$ and the set of weakly increasing sequences $\aa=(a_0,\dots,a_d)$ of non-negative integers with $a_0=0$. The inverse map is 
$$\aa=(0=a_0\leq a_1\dots \leq a_d) \to (x^{d-i}y^{a_i} : i=0,\dots, d).$$
It is easy to see that if $\aa$ corresponds to $I$, then 
$\dim_K R/I=\sum_{i=0}^d a_i$. 
Furthermore, if $\aa'$ corresponds to $J$, then the sequence associated to the product $IJ$ is $(c_0,c_1,c_2,\dots)$ where 
$c_i=\min\{ a_j+a'_k : j+k=i\}$. In particular we have the following lemma.

\begin{lemma}\label{formH}
Let $I$ be a monomial ideal and $\aa=(a_0,\dots,a_d)$ be the corresponding sequence. Then the Hilbert function of $I$ is given by: 
$$H^1(I,k)=\sum_{i=0}^{(k+1)d} \min\{ a_{j_1}+\dots +a_{j_{k+1}} : j_1+\dots+j_{k+1}=i\}.$$
\end{lemma}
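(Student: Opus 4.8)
The plan is to compute $H^1(I,k) = \dim_K R/I^{k+1}$ by describing the monomial basis of $R/I^{k+1}$ explicitly. First I would recall that since $I$ is a monomial ideal with associated sequence $\aa = (a_0, \dots, a_d)$, the monomials outside $I$ are exactly the $x^{d-i}y^j$ with $0 \le i \le d$ and $j < a_i$ (together with all $x^m y^j$ for $m > d$ and $j < a_0 = 0$, which contribute nothing), so $\dim_K R/I = \sum_{i=0}^d a_i$, as already noted in the excerpt. To handle $I^{k+1}$ I would use the observation, stated just before the lemma, that the sequence associated to a product $IJ$ of monomial ideals is the ``infimal convolution'' $c_i = \min\{a_j + a'_k : j+k = i\}$. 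Iterating this $k$ times, the sequence $\cc = (c_0, c_1, \dots, c_{(k+1)d})$ associated to $I^{k+1}$ has
$$
c_i = \min\{ a_{j_1} + \dots + a_{j_{k+1}} : j_1 + \dots + j_{k+1} = i\},
$$
where each $j_v$ ranges over $\{0, 1, \dots, d\}$. (I should note $x^{(k+1)d} \in I^{k+1}$, so the index $d$ for $I^{k+1}$ is $(k+1)d$, matching the upper limit in the displayed sum.)

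The main step is then just to apply the formula $\dim_K R/J = \sum_i c_i(J)$ to $J = I^{k+1}$: this immediately gives
$$
H^1(I,k) = \dim_K R/I^{k+1} = \sum_{i=0}^{(k+1)d} c_i = \sum_{i=0}^{(k+1)d} \min\{ a_{j_1} + \dots + a_{j_{k+1}} : j_1 + \dots + j_{k+1} = i\},
$$
which is the assertion.

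Two small points deserve care, and I expect the ``infimal convolution is associative'' bookkeeping to be the only mildly delicate part. First, one must check that iterating the two-fold product formula $k$ times really yields the stated $(k+1)$-fold minimum; this is a straightforward induction on $k$ using associativity of $\min$ and additivity of the sums, i.e. $\min_{j+j'=i}(a_j + \min_{j_2+\dots+j_{k+1}=j'}(a_{j_2}+\dots)) = \min_{j_1+\dots+j_{k+1}=i}(a_{j_1}+\dots)$. Second, one should make sure the range of the outer index $i$ and of the inner indices $j_v$ is correct: each $j_v \in \{0,\dots,d\}$ forces $i \le (k+1)d$, and for $i$ in this range the set $\{(j_1,\dots,j_{k+1}) : \sum j_v = i,\ 0\le j_v \le d\}$ is nonempty, so the minimum is well-defined; for $i > (k+1)d$ the corresponding monomials all lie in $I^{k+1}$ and contribute $0$, consistent with truncating the sum at $(k+1)d$. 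No genuine obstacle arises; the lemma is essentially a restatement of the product rule for the encoding sequences combined with the length formula $\dim_K R/J = \sum c_i(J)$.
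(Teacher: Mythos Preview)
Your proposal is correct and matches the paper's approach exactly: the paper states the lemma without proof, introducing it with ``In particular we have the following lemma'' immediately after recording the two ingredients you use, namely $\dim_K R/I=\sum_i a_i(I)$ and the infimal-convolution formula for the sequence associated to a product. Your write-up simply makes explicit the one-line iteration that the paper leaves to the reader.
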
 
\bigskip
 
 We set 
$$b_i(I)=a_i(I)-a_{i-1}(I)$$
 for $i=1,\dots,d$ and observe that the ideal $I$ can be as well described via the sequence $b_1(I),\dots, b_d(I)$ of non-negative integers. 

A monomial ideal $I$ is a lex-segment ideal if $x^iy^j\in I$ for some $j>0$ implies $x^{i+1}y^{j-1}\in I$. The $\mm$-primary lex-segment ideals are contracted and correspond exactly to strictly increasing $a$-sequences (equivalently, positive $b$-sequences) in the above correspondence, provided one takes $d=\min\{ j\in \NN : x^j\in I\}$.

\begin{remark} 
\label{sonolex}
With respect to suitable coordinate systems the ideals $L_i$ appearing in Theorem \ref{ZaGr} are lex-segment ideals. 
\end{remark} 

It follows from Theorem \ref{ZaGr} and Remark \ref{sonolex} that the study of the depth of the associated graded ring of contracted ideals boils down to the study of the depth of $\gr_{L}(R)$ for a lex-segment ideal $L$. 
Since $R$ is regular, one has $\depth \gr_{L}(R)=\depth \Rees(L)-1$, see \cite[Cor.2.7]{HM}. Therefore we can as well study the depth of $ \Rees(L)$ for lex-segment ideals $L$. 
 Trung and Hoa gave in \cite{TH} a characterization of the Cohen-Macaulay property of affine semigroup rings. Since $\Rees(L)$ is an affine semigroup ring one could hope to use their results to describe the lex-segment ideals $L$ such that $\Rees(L)$ is Cohen-Macaulay. In practice, however, we have not been able to follow this idea. 
 
Let $L$ be the lex-segment ideal with associated $a$-sequence $\aa=(a_0,\dots,a_d)$ and $b$-sequence $(b_1,\dots,b_d)$. We present $\Rees(L)$ as a quotient of $R[t_0,\dots, t_d]$ by the $R$-algebra map 
$$\psi: R[t_0,\dots, t_d]\to \Rees(L)\subset R[t]$$
obtained by sending $t_i\to x^{d-i}y^{a_i}t$. 
Set ${\bf 1}=(1,1,\dots,1)\in \NN^{d+1}$ and ${\bf d}=(d,d-1,d-2,\dots,0)\in \NN^{d+1}$. 

\begin{lemma} \label{genera}
 With the above notation, $\ker \psi$ is generated by the following binomials: 
\begin{itemize}
\item[(1)] $xt_i-y^{b_i}t_{i-1}$ with $i=1,\dots, d.$ 
\item[(2)] $t^{\alpha}-y^u t^{\beta}$ where $\alpha,\beta \in \NN^{d+1}$ satisfy
$$\left\{
\begin{array}{l}
{\bf 1}(\alpha-\beta)=0\\
{\bf d}(\alpha-\beta)=0\\ 
u=\aa(\alpha-\beta) \geq 0. 
\end{array}
\right.
$$
\end{itemize}
\end{lemma}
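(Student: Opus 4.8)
The plan is to identify $\Rees(L)$ with the affine semigroup ring $K[\mathcal S]$, where $\mathcal S\subset\NN^3$ is generated by $(1,0,0)$, $(0,1,0)$ and $(d-i,a_i,1)$ for $i=0,\dots,d$: the map $\psi$ sends the variables $x,y,t_0,\dots,t_d$ of $R[t_0,\dots,t_d]$ to the monomials $x$, $y$ and $x^{d-i}y^{a_i}t$ of $K[x,y,t]$, so $\ker\psi$ is a toric ideal. In particular $\ker\psi$ is generated by binomials $m-m'$ with $m,m'$ monomials of $R[t_0,\dots,t_d]$ satisfying $\psi(m)=\psi(m')$: writing an element of $\ker\psi$ as a $K$-linear combination of monomials and grouping the monomials according to the value of $\psi$ (equal values are equal monomials of $K[x,y,t]$), within each group the coefficients sum to $0$. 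A direct computation gives $\psi(x^ay^bt^\gamma)=x^{a+{\bf d}\gamma}\,y^{b+\aa\gamma}\,t^{{\bf 1}\gamma}$, and from this it is immediate that the binomials in (1) and (2) lie in $\ker\psi$ (for (1) use $a_{i-1}+b_i=a_i$; for (2) use the three displayed conditions, which say exactly that the $t$-, $x$- and $y$-degrees match). Let $J$ be the ideal generated by the binomials in (1) and (2); then $J\subseteq\ker\psi$, and it remains to show every binomial $m-m'$ as above lies in $J$.

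The first step is to reduce monomials modulo the relations in (1). If $x^ay^bt^\gamma$ has $a\ge 1$ and $\gamma_i\ge 1$ for some $i\ge 1$, then replacing the factor $xt_i$ by $y^{b_i}t_{i-1}$ changes the monomial by a multiple of the generator $xt_i-y^{b_i}t_{i-1}$ of $J$, while strictly decreasing its $x$-degree and leaving its $\psi$-image unchanged. Iterating, we obtain from any monomial $m$ a monomial $\bar m$ with $m-\bar m\in J$, $\psi(\bar m)=\psi(m)$, and such that either the $x$-degree of $\bar m$ is $0$ or the $t$-part of $\bar m$ is a power of $t_0$; call such monomials reduced. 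No uniqueness of $\bar m$ is needed: any terminating choice of reductions will do.

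The key point is then: if $\bar m$ and $\bar{m'}$ are reduced with $\psi(\bar m)=\psi(\bar{m'})$, then $\bar m-\bar{m'}\in J$. Write the common image as $x^Py^Qt^N$, so that any reduced preimage $x^ay^bt^\gamma$ has $N={\bf 1}\gamma$. Using the formula for $\psi$ together with the identity ${\bf d}\gamma=dN-{\bf e}\gamma$, where ${\bf e}=(0,1,\dots,d)=d\cdot{\bf 1}-{\bf d}$ and ${\bf e}\gamma\ge 0$, one checks: if $P\ge dN$ the only reduced preimage is $x^{P-dN}y^Qt_0^N$, so $\bar m=\bar{m'}$; if $P<dN$ every reduced preimage has $x$-degree $0$ and has the form $y^{Q-\aa\gamma}t^\gamma$ with $\gamma\in\NN^{d+1}$, ${\bf 1}\gamma=N$ and ${\bf d}\gamma=P$. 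In the second case, write $\bar m=y^{Q-\aa\gamma}t^\gamma$ and $\bar{m'}=y^{Q-\aa\gamma'}t^{\gamma'}$ and assume $\aa\gamma\ge\aa\gamma'$; then $\alpha=\gamma$, $\beta=\gamma'$ and $u=\aa(\gamma-\gamma')\ge 0$ satisfy the conditions of (2), so $t^\gamma-y^ut^{\gamma'}\in J$, and multiplying by $y^{Q-\aa\gamma}$ yields exactly $\bar m-\bar{m'}\in J$. Finally $m-m'=(m-\bar m)-(m'-\bar{m'})+(\bar m-\bar{m'})\in J$, as wanted.

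The step I expect to be the main obstacle is the bookkeeping in the third paragraph: verifying that the reduced preimages of a prescribed monomial $x^Py^Qt^N$ are either unique or all of the $x$-free shape, and then matching the resulting difference of reduced monomials precisely with the form of the relations in (2). Everything else is routine — in particular the reduction in the second paragraph needs only termination, not confluence, and the identification of $\ker\psi$ as a toric ideal generated by binomials is standard.
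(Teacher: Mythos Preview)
Your proof is correct and follows essentially the same approach as the paper: both identify $\ker\psi$ as a toric ideal generated by binomials, reduce monomials modulo the type-(1) relations until either the $x$-degree is zero or only $t_0$ occurs, and then observe that the remaining difference is (a $y$-multiple of) a type-(2) binomial. Your explicit case split on $P\ge dN$ versus $P<dN$ is a cleaner replacement for the paper's terse contradiction argument based on $\GCD(M_1,M_2)=1$, and in particular it sidesteps the small bookkeeping issue of why the coprimality condition survives the reduction.
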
 

\begin{proof} Let $J$ be the ideal generated by the binomials of type (1) and (2). Obviously $J\subseteq \ker\psi$. Since $\ker \psi$ is generated by the binomials it contains, it is enough to show that every binomial $M_1-M_2\in \ker \psi$ with $\GCD(M_1,M_2)=1$ belongs to $J$. Up to multiples of elements of type (1), we may assume that if $x$ divides one of the $M_i$, say $M_1$, then $M_1=x^iy^jt_0^ k$. But this clearly contradicts the fact that $M_1-M_2\in \ker \psi$. In other words, every binomial in $\ker\psi$ is, up to multiples of elements of type (1), a multiple of an element of type (2). 
\end{proof}

\begin{lemma} Let $L$ be a lex-segment ideal and $\aa=(a_0,\dots,a_d)$ its associated $a$-sequence, then we have: 
\begin{itemize}
\item[(1)] $\Rees(L)/(y)\Rees(L) =K[x,t_0,\dots,t_d]/x(t_1,\dots,t_d)+\ini_\aa(P)$,
\item[(2)] $F(L)=K[t_0,\dots,t_d]/\ini_\aa(P)$,
\item[(3)] $\depth \gr_L(R)=\depth \Rees(L)-1=\depth F(L)$, 
\end{itemize} 
where $P$ is the ideal introduced in Definition \ref{P}. \end{lemma}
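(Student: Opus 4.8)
The plan is to read off all three claims from the presentation $\psi\colon R[t_0,\dots,t_d]\to\Rees(L)$ of Lemma \ref{genera} by reducing modulo $y$ and by inverting $x$ respectively.

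First I would prove (1). By Lemma \ref{genera}, $\Rees(L)=R[t_0,\dots,t_d]/\ker\psi$ with $\ker\psi$ generated by the binomials $xt_i-y^{b_i}t_{i-1}$ of type (1) and the binomials $t^\alpha-y^ut^\beta$ of type (2). Reducing modulo $(y)$ kills $y$, so each type (1) binomial becomes $xt_i$, i.e. it contributes the generators $xt_1,\dots,xt_d$; note $xt_0$ is \emph{not} forced since $b_i\ge 1$ makes $y^{b_i}t_{i-1}\equiv 0$ only for $i\ge1$. Each type (2) binomial $t^\alpha-y^ut^\beta$ becomes $t^\alpha$ if $u>0$ and $t^\alpha-t^\beta$ if $u=0$; here $u=\aa(\alpha-\beta)$, and the conditions ${\bf 1}(\alpha-\beta)=0$, ${\bf d}(\alpha-\beta)=0$ are exactly the two linear relations cutting out $P$ (recall $P$ is the kernel of $t_i\mapsto x^{d-i}y^i$, whose exponent lattice is generated by vectors orthogonal to $(1,\dots,1)$ and $(d,d-1,\dots,0)$). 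Among such $\alpha-\beta$, the ones with $\aa(\alpha-\beta)=0$ give the binomials of $P$ lying in $\ini_\aa(P)$ unchanged, while those with $\aa(\alpha-\beta)>0$ give the \emph{initial} monomial $t^\alpha=\ini_\aa(t^\alpha-t^\beta)$; together these generate $\ini_\aa(P)$. One should check that modulo $x(t_1,\dots,t_d)$ one may always normalize $\alpha-\beta$ so that type (1) relations do not interfere — this is precisely the normalization already carried out in the proof of Lemma \ref{genera}. Hence $\Rees(L)/(y)\Rees(L)=K[x,t_0,\dots,t_d]/\bigl(x(t_1,\dots,t_d)+\ini_\aa(P)\bigr)$.

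Next, (2): the fiber cone is $F(L)=\Rees(L)/\mm\Rees(L)=\Rees(L)/(x,y)\Rees(L)$. Setting both $x=0$ and $y=0$ in the type (1) and type (2) generators, the type (1) binomials vanish identically and the type (2) binomials reduce, as above, to $\ini_\aa(P)$; thus $F(L)=K[t_0,\dots,t_d]/\ini_\aa(P)$. Alternatively, this follows from (1) by further quotienting by $x$, since $x(t_1,\dots,t_d)\subset(x)$.

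Finally (3): the first equality $\depth\gr_L(R)=\depth\Rees(L)-1$ is \cite[Cor.2.7]{HM}, valid because $R=K[x,y]$ is regular, as already noted in the text preceding Lemma \ref{genera}. For $\depth\Rees(L)-1=\depth F(L)$ I would argue that $x,y$ (in the appropriate order) form a regular sequence on $\Rees(L)$: indeed $\Rees(L)\subset R[t]$ is a domain so $y$ is a nonzerodelta on it, and from the explicit description in (1) one sees that $x$ is a nonzerodivisor on $\Rees(L)/(y)\Rees(L)$ — the ring $K[x,t_0,\dots,t_d]/(x(t_1,\dots,t_d)+\ini_\aa(P))$ has $x$-torsion only inside $(t_1,\dots,t_d)$, but one checks $x$ is regular on the relevant component using that $\ini_\aa(P)$ involves only the $t_i$. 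Consequently $\depth F(L)=\depth\bigl(\Rees(L)/(x,y)\Rees(L)\bigr)=\depth\Rees(L)-2$, and combining with the first equality gives $\depth\gr_L(R)=\depth\Rees(L)-1=\depth F(L)$.

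The main obstacle I anticipate is the bookkeeping in (1): one must verify carefully that the type (1) relations $xt_i-y^{b_i}t_{i-1}$, upon reduction mod $y$, contribute \emph{exactly} the ideal $x(t_1,\dots,t_d)$ and nothing more, and that after adding these one can reduce every type (2) binomial to its $\aa$-initial form without producing spurious extra generators — equivalently, that the normalization of binomials used in the proof of Lemma \ref{genera} is compatible with passing to the quotient. The regular-sequence claim in (3) is the other delicate point, but it reduces to a direct inspection of the monomial ideal $x(t_1,\dots,t_d)+\ini_\aa(P)$ and should follow from the fact that $\ini_\aa(P)$, being the initial ideal of a prime of dimension $2$, defines a ring of dimension $2$, so that killing the extra variable $x$ modulo $y$ drops depth by at most one while dimension drops by exactly one.
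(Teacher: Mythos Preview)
Your arguments for (1) and (2) are essentially the paper's: reduce the generators of $\ker\psi$ from Lemma~\ref{genera} modulo $(y)$, identify the images with $x(t_1,\dots,t_d)$ and with the $\aa$-initial forms of binomials in $P$, and then pass to the further quotient by $x$ for the fiber cone. The paper is slightly more explicit about checking both inclusions of $\ker\psi+(y)=x(t_1,\dots,t_d)+\ini_\aa(P)+(y)$, but your sketch covers the same ground.

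Part (3), however, has a genuine error. Your claim that $x,y$ is a regular sequence on $\Rees(L)$ is false: in $\Rees(L)/(y)\Rees(L)=K[x,t_0,\dots,t_d]/\bigl(x(t_1,\dots,t_d)+\ini_\aa(P)\bigr)$ one has $x\cdot t_1=0$ while $t_1\neq 0$ (since $\ini_\aa(P)$ is generated in degree $\ge 2$), so $x$ is a zerodivisor. You even note the $x$-torsion and then wave it away with ``$x$ is regular on the relevant component'', which is not a statement that rescues the depth count. Worse, the arithmetic does not close: from a regular sequence you would get $\depth F(L)=\depth\Rees(L)-2$, which combined with $\depth\gr_L(R)=\depth\Rees(L)-1$ yields $\depth\gr_L(R)=\depth F(L)+1$, not the desired equality.

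The paper's route is different and is what you need. From the computation $(x(t_1,\dots,t_d)+\ini_\aa(P)):x=(t_1,\dots,t_d)$ one obtains a short exact sequence
\[
0\to K[x,t_0](-1)\to \Rees(L)/(y)\Rees(L)\to F(L)\to 0,
\]
with the left-hand term of depth $2$. The depth lemma for short exact sequences then gives $\depth F(L)=\depth\bigl(\Rees(L)/(y)\Rees(L)\bigr)$ when the latter is $0$ or $1$; in the remaining case $\Rees(L)/(y)$ is Cohen--Macaulay of dimension $2$, hence has regularity $1$, the sequence forces $\reg F(L)=1$, and Lemma~\ref{CMvsCM} together with Lemma~\ref{Fshort} shows $F(L)$ is Cohen--Macaulay too. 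Since $y$ is a nonzerodivisor on the domain $\Rees(L)$, one has $\depth\Rees(L)-1=\depth\bigl(\Rees(L)/(y)\bigr)=\depth F(L)$, and \cite[Cor.~2.7]{HM} gives the remaining equality. The point is precisely that $x$ is \emph{not} regular modulo $y$: the kernel of multiplication by $x$ supplies the polynomial-ring summand $K[x,t_0]$ that makes the depth comparison work.
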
 

\begin{proof} Set $F=F(L)$, $G=\gr_L(R)$ and ${\mathcal R}=\Rees(L)$. First note that (2) follows from (1) since $F={\mathcal R}/(x,y){\mathcal R}$. To prove (1) we have to show that 

$$\ker\psi+(y)=x(t_1,\dots,t_d)+\ini_\aa(P)+(y).$$

For the inclusion $\subseteq$ we show that the generators of $\ker\psi$ of type (1) and (2) in Lemma \ref{genera} belong to the ideal on the right hand side. This is obvious for those of type (1). For those of type (2), note that for any such $t^\alpha-y^u t^\beta$ one has $t^\alpha- t^\beta\in P$ and $\ini_\aa(t^\alpha- t^\beta)=t^\alpha$ if $u>0$ and $\ini_\aa(t^\alpha- t^\beta)=t^\alpha- t^\beta$ if $u=0$. 

The inclusion $\supseteq$ for the elements of $x(t_1,\dots,t_d)$ is obvious. Further, since $P$ is generated by binomials, one knows that $\ini_\aa(P)$ is generated by $\ini_\aa(t^\alpha-t^\beta)$ with $t^\alpha-t^\beta\in P$, see \cite[Chap.1]{St}. If $\aa(\alpha-\beta)=0$, then $\ini_\aa(t^\alpha-t^\beta)=t^\alpha-t^\beta$ and $t^\alpha-t^\beta\in \ker\psi$. If instead $u=\aa(\alpha-\beta)>0$, then $\ini_\aa(t^\alpha-t^\beta)=t^\alpha$ and $t^\alpha-y^ut^\beta\in \ker\psi$ and so $t^\alpha\in \ker\psi+(y)$. 

To prove (3) note that $P\subseteq (t_1,\dots, t_d)$ and hence $\ini_\aa(P)\subseteq (t_1,\dots, t_d)$. It follows that 

$$(t_1,\dots, t_d)\subseteq (x(t_1,\dots,t_d)+\ini_\aa(P)):x\subseteq 
(t_1,\dots,t_d):x=(t_1,\dots,t_d).$$
Hence 
$$(t_1,\dots, t_d)=(x(t_1,\dots,t_d)+\ini_\aa(P)):x$$
and we get a short exact sequence 
$$0\to K[x,t_0](-1)\to {\mathcal R}/(y){\mathcal R}\to F\to 0.$$

By Corollary \ref{Fshort} the ring $F$ is $2$-dimensional with short h-vector. It follows that the same is true for ${\mathcal R}/(y){\mathcal R}$ (with respect to the standard grading). Using the depth formula for short exact sequences \cite[Prop.1.2.9]{BH}, we have that if $\depth {\mathcal R}/(y){\mathcal R}$ is $0$ or $1$, then $\depth F=\depth {\mathcal R}/(y){\mathcal R}$. Finally, if ${\mathcal R}/(y){\mathcal R}$ is Cohen-Macaulay then $\reg( {\mathcal R}/(y){\mathcal R})=1$ and it follows that $\reg F=1$. Then from Lemma \ref{CMvsCM} we can conclude that $F$ is Cohen-Macaulay. 

We have shown that $\depth {\mathcal R}-1=\depth {\mathcal R}/(y){\mathcal R}=\depth F$. Since by \cite[Cor.2.7]{HM} $\depth G=\depth {\mathcal R}-1$, the proof of (3) is complete. 
\end{proof} 

Summing up, we have shown the following proposition. 

\begin{proposition} \label{veryimpo}
Let $L$ be a lex-segment ideal in $R=K[x,y]$ with associated $a$-sequence $\aa=(a_0,\dots,a_d)$. Then 
$$\depth \gr_L(R)=\depth K[t_0,\dots,t_d]/\ini_\aa(P)$$
where $P$ is the ideal of Definition \ref{P}. 
\end{proposition}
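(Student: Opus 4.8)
The plan is simply to quote the preceding lemma: its part (3) gives $\depth\gr_L(R) = \depth F(L)$, and its part (2) identifies $F(L)$ with $K[t_0,\dots,t_d]/\ini_\aa(P)$, so the two equalities combine to the stated formula. In that sense the proposition is a repackaging and can be disposed of in one line. It is nevertheless worth recording how I would assemble the two ingredients of that lemma from scratch, since that is where the content sits.

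First I would present $\Rees(L)$ as $R[t_0,\dots,t_d]/\ker\psi$ with $\psi(t_i) = x^{d-i}y^{a_i}t$ and pin down $\ker\psi$, obtaining the generators of Lemma \ref{genera}: the linear binomials $xt_i - y^{b_i}t_{i-1}$ for $i=1,\dots,d$, together with the toric binomials $t^\alpha - y^u t^\beta$ where ${\bf 1}(\alpha-\beta) = {\bf d}(\alpha-\beta) = 0$ and $u = \aa(\alpha-\beta)\ge 0$. The mechanism is that, working modulo the linear binomials, any binomial in $\ker\psi$ can be cleared of the variable $x$, and what is left is forced to be a binomial of the toric type.

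Next I would reduce modulo $y$. Using those generators, each linear binomial becomes $xt_i$, while each toric binomial $t^\alpha - y^u t^\beta$ becomes $t^\alpha$ when $u>0$ and $t^\alpha - t^\beta$ when $u=0$; but that is precisely $\ini_\aa(t^\alpha-t^\beta)$, and since $P$ is toric the ideal $\ini_\aa(P)$ is generated by such initial forms of its binomials. Hence $\Rees(L)/(y) = K[x,t_0,\dots,t_d]/(x(t_1,\dots,t_d)+\ini_\aa(P))$, and killing $x$ as well gives part (2), namely $F(L) = K[t_0,\dots,t_d]/\ini_\aa(P)$.

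Finally, to line up the depths I would use $\ini_\aa(P)\subseteq(t_1,\dots,t_d)$, which forces $\big(x(t_1,\dots,t_d)+\ini_\aa(P)\big):x = (t_1,\dots,t_d)$ and hence a short exact sequence $0\to K[x,t_0](-1)\to\Rees(L)/(y)\to F(L)\to 0$. By Lemma \ref{Fshort} the ring $F(L)$ is two-dimensional with a short $h$-vector, so $\Rees(L)/(y)$ is too; the depth lemma for short exact sequences \cite[Prop.1.2.9]{BH} then makes the two depths equal whenever their common value is $0$ or $1$, while in the Cohen-Macaulay case one compares regularities across the sequence and applies Lemma \ref{CMvsCM} to conclude. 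Combined with $\depth\gr_L(R) = \depth\Rees(L)-1$ from \cite[Cor.2.7]{HM} and $\depth\Rees(L)-1 = \depth\Rees(L)/(y)$ (valid because $\Rees(L)$ is a domain, so $y$ is a nonzerodivisor on it), this yields $\depth\gr_L(R) = \depth F(L)$. I expect the genuinely delicate points to be the explicit description of $\ker\psi$ in Lemma \ref{genera} and the check that reduction modulo $y$ returns exactly $\ini_\aa(P)$; once that dictionary is fixed, the depth bookkeeping is routine thanks to the short $h$-vector from Lemma \ref{Fshort}.
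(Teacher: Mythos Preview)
Your proposal is correct and follows exactly the paper's approach: the proposition is indeed just a repackaging of parts (2) and (3) of the preceding lemma, and your sketch of how that lemma is proved (generators of $\ker\psi$ from Lemma~\ref{genera}, reduction modulo $y$ to identify $F(L)$ with $K[t_0,\dots,t_d]/\ini_\aa(P)$, then the short exact sequence together with Lemma~\ref{Fshort}, Lemma~\ref{CMvsCM}, and \cite[Cor.~2.7]{HM}) matches the paper line by line. Your explicit remark that $y$ is a nonzerodivisor on $\Rees(L)$ because it is a domain is the one step the paper leaves implicit.
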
 

\section{Cohen-Macaulay initial ideals of the ideal of the rational normal curve}\label{CM for P}

The results of the previous sections show that the study of the contracted ideals of $K[x,y]$ whose associated graded ring is Cohen-Macaulay is equivalent to the study of the initial ideals of $P$ defining Cohen-Macaulay rings. 
In this section we describe the initial ideals of $P$ (with respect to vectors and in the given coordinates) defining Cohen-Macaulay rings. 
 In the following we will say that an ideal $I$ is Cohen-Macaulay if the quotient ring defined by $I$ is Cohen-Macaulay. 
 The steps of the classification are:

\begin{itemize}
\item[(1)] Classify the $2$-dimensional Cohen-Macaulay monomial ideals with minimal multiplicity. 
\item[(2)] Among the ideals of (1) identify those of the form $\ini_\tau(P)$ for some term order $\tau$.
\item[(3)] Identify the vectors $\aa\in \QQ_{\geq 0}^{d+1}$ such that $\ini_\aa(P)$ is Cohen-Macaulay (monomial or not).
\end{itemize} 

 We start by classifying the $2$-dimensional Cohen-Macaulay square-free monomial ideals with minimal multiplicity. 
Square-free monomial ideals are in bijective correspondence with simplicial complexes. In particular,
square-free monomial ideals defining algebras of Krull dimension $2$ are in bijective correspondence
with simplicial complexes of dimension $1$, that is, graphs. The correspondence goes like this: to any
graph with vertex set $V$ and edge set $E$ one associates the monomial ideal on variables $V$ whose
generators are the products $xy$ such that $\{x,y\}$ is not in $E$ and by the square-free monomials
of degree $3$. 

Recall that a graph $G$ with $n$ vertices and $e$ edges is a tree if it satisfies the following equivalent conditions: 

\begin{itemize} 
\item[(1)] For every $x$ and $y$ distinct vertices there exists exactly one path in $G$ connecting
$x$ to $y$. 
\item[(2)] $G$ is connected and $n-e=1$. 
\item[(3)] $G$ is connected and if we remove any edge the resulting graph is disconnected. 
\end{itemize}

\begin{lemma}\label{trees} The $2$-dimensional Cohen-Macaulay square-free monomial ideals with minimal multiplicity correspond to trees. 
\end{lemma}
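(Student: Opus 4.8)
The plan is to translate the statement into a purely combinatorial condition on the graph $G$ associated to a square-free monomial ideal $I$ defining a $2$-dimensional ring, and then check that "Cohen-Macaulay with minimal multiplicity" corresponds exactly to "$G$ is a tree." First I would record the numerical dictionary. If $G$ has $n$ vertices (hence $I \subset K[x_1,\dots,x_n]$) and $e$ edges, then the Stanley-Reisner ring $K[t_0,\dots,t_d]/I$ (with $n = d+1$ variables here, though the exact bookkeeping is irrelevant) has Krull dimension $2$ precisely when $G$ has at least one edge, and its $h$-polynomial is $1 + (n-2)z + (e - n + 1)z^2$: the linear coefficient is (number of variables) minus the codimension $= n - (n-2) = n-2$, and the quadratic coefficient counts the edges minus what a tree would have. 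Minimal multiplicity, per the Definition in the excerpt, means $e(S/I) = c + 1 = (n-2) + 1 = n-1$; since $e(S/I) = h(1) = 1 + (n-2) + (e-n+1) = e$, the minimal-multiplicity condition is exactly $e = n - 1$, equivalently $n - e = 1$. By the tree characterization (2) recalled just above the statement, this is precisely the condition "$G$ is connected and $n - e = 1$" once we also know $G$ is connected.

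So the real content is: among $1$-dimensional simplicial complexes, the Stanley-Reisner ring is Cohen-Macaulay if and only if the graph is connected, and then imposing minimal multiplicity $n - e = 1$ upgrades "connected" to "tree." For the Cohen-Macaulay direction I would invoke Reisner's criterion (or simply Hochster's formula / the fact that a $1$-dimensional complex is Cohen-Macaulay iff it is connected): a graph, viewed as a $1$-dimensional simplicial complex, is Cohen-Macaulay over any field iff it is connected, because the only nonvanishing reduced homology in the relevant range is $\widetilde H_0$, which vanishes exactly when $G$ is connected, and links of vertices are $0$-dimensional hence automatically Cohen-Macaulay. Combined with $h(1) = e = n-1$, connectedness plus $n - e = 1$ gives a tree by characterization (2); conversely a tree is connected (hence CM) and satisfies $n - e = 1$ (hence minimal multiplicity). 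This closes the equivalence.

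The step I expect to be slightly delicate — though not hard — is the bookkeeping of which square-free monomials actually lie in $I$: the correspondence described in the excerpt puts \emph{all} square-free monomials of degree $\geq 3$ in $I$ together with the non-edges $x_ix_j$, so the associated simplicial complex is exactly the flag/clique complex truncated at dimension $1$, i.e. literally the graph $G$ itself with its vertices and edges as faces. One must be a little careful that isolated vertices of $G$ do not spoil dimension or connectedness claims: an isolated vertex is a connected component all by itself, so as soon as $G$ has an isolated vertex and at least one edge the complex is disconnected and not Cohen-Macaulay; this is consistent with trees, which have no isolated vertices once $n \geq 2$. After nailing that down, the proof is just the assembly of the numerical dictionary with Reisner's criterion, so I would keep it short: state the $h$-vector computation, apply the connectedness criterion for Cohen-Macaulayness of graphs, and read off the tree condition from $n - e = 1$.
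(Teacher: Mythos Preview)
Your proposal is correct and follows essentially the same route as the paper: compute the Hilbert series (equivalently the $h$-polynomial) in terms of $n$ and $e$ to see that minimal multiplicity is $n-e=1$, invoke the standard fact that a $1$-dimensional Stanley--Reisner ring is Cohen--Macaulay iff the graph is connected, and combine the two to get the tree characterization. The paper cites \cite[5.1.7, 5.1.26]{BH} for the Hilbert series and the connectedness criterion rather than Reisner's criterion directly, but the content is identical.
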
 

\begin{proof} Let $G$ be a graph with $n$ vertices and $e$ edges, let $A$ be the corresponding quotient ring. The Hilbert series of $A$ is given by 
$$1+\frac{nz}{(1-z)}+\frac{ez^2}{(1-z)^2}$$
see \cite[5.1.7]{BH}. This implies immediately that $A$ has minimal multiplicity if and only if $n-e=1$. The Cohen-Macaulay property of $A$ corresponds to connectedness of $G$, see \cite[5.1.26]{BH}. So we are looking at connected graphs with $n-e=1$, that is, trees. \end{proof} 

Next we extend our characterization from the square-free monomial ideals to general monomial ideals. We will make use of the following lemma whose easy proof belongs to the folklore of the subject. 

\begin{lemma}\label{sizi} Let $I$ be a monomial ideal generated in degree $2$ and with linear
syzygies. Let $x,y,z$ be variables. We have: 
\begin{itemize}
\item[(1)] If $x^2,y^2\in I$ then $xy\in I$.
\item[(2)] If $x^2, yz\in I$ then either $xy\in I$ or $xz\in I$. 
\end{itemize} 
\end{lemma}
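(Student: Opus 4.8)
\textbf{Proof plan for Lemma \ref{sizi}.}

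The plan is to exploit the hypothesis that $I$ is generated in degree $2$ and has linear syzygies, i.e. the first syzygy module of the degree-$2$ generators is generated by Koszul-type relations of the form $x_k\cdot(x_ix_j) - x_j\cdot(x_ix_k)$ whenever $x_ix_j$ and $x_ix_k$ are both minimal generators. Equivalently, for any minimal generator $m$ of $I$ and any variable $w$, if the monomial $wm$ lies in $I$ then it must be divisible by some minimal generator $m'\neq m$ that shares exactly all but one variable with $m$ — that is $m'$ differs from $m$ by swapping a single variable. This is the combinatorial reformulation of "linear syzygies in degree $2$'' that I would isolate as a preliminary observation (it is the folklore content the lemma refers to), and everything else follows by looking at a single cubic monomial.

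For part (1): assume $x^2,y^2\in I$ and suppose toward a contradiction that $xy\notin I$. Consider the monomial $x^2 y$. On one hand $x^2 y = x\cdot x^2 \in I$. By the linear-syzygy reformulation applied to the minimal generator $x^2$ multiplied by the variable $y$, the monomial $x^2y$ must be divisible by a minimal generator obtained from $x^2$ by swapping one of its two $x$'s for $y$; the only candidate is $xy$, so $xy\in I$, contradiction. Hence $xy\in I$. For part (2): assume $x^2,yz\in I$ and suppose neither $xy$ nor $xz$ lies in $I$. Consider the monomial $x^2 z = x\cdot x^2\in I$. Again applying the reformulation to $x^2$ multiplied by $z$: $x^2z$ must be divisible by a minimal generator got from $x^2$ by replacing one $x$ with $z$, i.e. by $xz$. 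Since $xz\notin I$ by assumption, the only other possibility is that $x^2z$ is already divisible by a minimal generator supported on $\{x,z\}$ other than through this swap — but any degree-$2$ monomial dividing $x^2z$ is among $x^2, xz, z^2$, and $z^2\in I$ would force (by part (1) applied to $x^2,z^2$) $xz\in I$, again a contradiction; so in fact $x^2z$'s only degree-$2$ divisor in $I$ is $x^2$ itself, and the linear-syzygy condition on the pair (generator $x^2$, variable $z$) is violated unless $xz\in I$. Symmetrically, looking at $x^2y$ forces $xy\in I$. So at least one of $xy,xz$ lies in $I$.

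Alternatively, and perhaps more cleanly, I would phrase the whole argument homologically: linear syzygies means $\reg(I)=2$, equivalently $\Tor_1^S(I,K)$ is concentrated in degree $3$; one then computes $\Tor_1$ in the relevant multidegrees ($2,1,0$ on the three variables $x,y,z$, or $1,1,1$) using the Taylor complex or the Lyubeznik/lcm-lattice description and reads off that the vanishing of the corresponding linear strand forces the claimed membership. I expect the purely combinatorial route above to be shorter to write, so I would go with that and keep the homological remark implicit.

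The only genuine obstacle is pinning down, cleanly and correctly, the combinatorial meaning of "generated in degree $2$ with linear syzygies'' — specifically verifying that if $wm\in I$ for a minimal generator $m$ and a variable $w$, then $wm$ is divisible by a minimal generator differing from $m$ in exactly one variable slot. This is standard (it is exactly the statement that the first syzygies among the quadrics are spanned by the "exchange'' relations $x_k\,e_{ij}-x_j\,e_{ik}$), but it must be stated carefully since $m$ could be $x^2$ and the "swap'' replaces one of the two repeated variables. Once that reformulation is in hand, both parts (1) and (2) are immediate case checks on a single cubic monomial, so there is no substantial calculation to grind through.
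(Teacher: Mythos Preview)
Your combinatorial reformulation is false as stated, and that is the gap. You claim: for any minimal generator $m$ and variable $w$, the monomial $wm$ must be divisible by some minimal generator $m'\neq m$ obtained from $m$ by a single variable swap. But the hypothesis ``$wm\in I$'' is vacuous (it always holds, since $m\in I$), so you are asserting this for \emph{every} pair $(m,w)$. Counterexample: $I=(x^2,xy,y^2)\subset K[x,y,z]$ has linear syzygies (it is $(x,y)^2$, whose syzygy module is generated by $ye_{x^2}-xe_{xy}$ and $ye_{xy}-xe_{y^2}$), yet with $m=x^2$ and $w=z$ the monomial $x^2z$ is divisible by no generator other than $x^2$.

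The symptom shows up in your argument: your proof of (1) never uses the hypothesis $y^2\in I$, and your proof of (2) never uses $yz\in I$. Since the conclusions are false without those hypotheses, any valid argument must invoke them. The paper does so by starting from the specific non-linear Koszul syzygy between the two given generators. For (1), the relation $y^2 e_{x^2}-x^2 e_{y^2}$ lies in $\Syz(I)$ and has degree $2$; writing it as an $S$-combination of the linear generators $ae_i-be_j$ and reading off the $e_{x^2}$-coefficient forces some linear syzygy $ye_{x^2}-be_{m}$ to occur, whence $m\mid yx^2$ with $m\neq x^2$, i.e.\ $m=xy\in I$. Part (2) is the same with the syzygy $yz\,e_{x^2}-x^2 e_{yz}$. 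Your instinct to examine a single cubic is not far off, but the correct trigger is the presence of a \emph{higher-degree syzygy} coming from the second generator, not mere membership of $wm$ in $I$.
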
 

\begin{proof} Say $I$ is generated by monomials $m_1=x^2$ , $m_2=y^2$ and other monomials $m_3,..., m_s$. 
Take a free module $F$ with basis $e_1,e_2,\dots e_s$ and map $e_j$ to $m_j$. The syzygy module
$\Syz(I)$ is generated by the reduced Koszul relations $ae_i-be_j$ with $a=m_j/\GCD(m_i,m_j)$ and
$b=m_i/\GCD(m_i,m_j)$. By assumption we know that $\Syz(I)$ is generated by the elements $ae_i-be_j$
with $\deg(a)=1$, call this set $G$. Now $y^2e_1-x^2e_2$ is in $\Syz(I)$ and so can be written as $\sum
v_{ij} (ae_i-be_j)$ where the sum is extended to the elements $ae_i-be_j\in G$. It follows that 
there must be in $G$ an element of the form $ye_1-be_i$. We deduce that $m_i/\GCD(m_i,x^2)=y$, forcing
$m_i$ to be $xy$. Similarly one proves (2). 
\end{proof} 

The crucial inductive step is encoded in the following lemma. 

\begin{lemma}
\label{induct} Let $I\subset S=K[x_1,\dots, x_n]$ be a monomial ideal such that $S/I$ is Cohen-Macaulay of dimension $2$ with minimal multiplicity. Let 
$x_i$ be a variable such that $x_i^2\in I$. Set $S'=K[x_j : 1\leq j\leq n, j\neq i]$. Then: 
\begin{itemize}
\item[(1)] $I:(x_i)$ is generated by exactly $n-2$ variables. 
\item[(2)] Write $I+(x_i)=J+(x_i)$ where $J$ is a monomial ideal of $S'$. 
 Then $S'/J$ is a $2$-dimensional Cohen-Macaulay ring with minimal multiplicity. 
\end{itemize} 
\end{lemma}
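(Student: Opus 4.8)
The plan is to use the structure of monomial ideals whose quotient rings are two-dimensional Cohen-Macaulay with minimal multiplicity, leveraging the fact (from Lemma~\ref{CMvsCM} and the discussion before it) that such an $S/I$ has a short $h$-vector and $\reg(I)=2$, so $I$ is generated in degree $2$ with linear syzygies. Note that since $S/I$ has minimal multiplicity and codimension $n-2$, its $h$-polynomial is $1+(n-2)z$, which forces $I$ to be generated by exactly $\binom{n}{2}-(n-1)=\binom{n-1}{2}$ quadrics. I record these structural consequences first, since both parts of the lemma will use them.

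For part (1): because $\reg(I)=2$ and $S/I$ is Cohen-Macaulay of dimension $2$, a generic linear form is a nonzerodivisor, and after going modulo it we reduce to the Artinian case; alternatively, I would argue directly. Since $x_i^2\in I$ and $I$ is generated by quadrics with linear syzygies, Lemma~\ref{sizi} controls the "neighbours" of $x_i$: for each other variable $x_j$, either $x_ix_j\in I$, or not. Let $W=\{x_j : j\neq i,\ x_ix_j\notin I\}$; I claim $I:(x_i)$ is generated exactly by the variables in $\{x_j : j\neq i\}\setminus W$. The inclusion $\supseteq$ of those variables is clear. For the converse, a colon of a monomial ideal by a variable is again monomial, and any minimal generator of $I:(x_i)$ that is not one of these variables would have to come from a degree-$2$ generator $m$ of $I$ not divisible by $x_i$; one then checks, using the linear-syzygy hypothesis and Lemma~\ref{sizi}(2) applied to $x_i^2$ and $m$, that such an $m$ cannot contribute a new generator (its "$x_i$-partners" are already in $W^c$). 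This pins down $I:(x_i)$ as generated by $|W^c| = (n-1)-|W|$ variables, and a Hilbert-function count — comparing $\dim_K(S/I)_2$ computed two ways, or using the exact sequence $0\to (S/(I:x_i))(-2)\xrightarrow{x_i} S/I\to S/(I+(x_i))\to 0$ together with the known $h$-vector $1+(n-2)z$ — forces $|W^c|=n-2$, equivalently $|W|=1$, giving exactly $n-2$ variables. (Geometrically: in the tree picture of Lemma~\ref{trees}, $x_i^2\in I$ means $x_i$ is a leaf, and $W$ is its unique neighbour.)

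For part (2): from part (1) we know $x_i$ has exactly one "partner" variable, say $x_k$, with $x_ix_k\notin I$. Write $I+(x_i)=J+(x_i)$ with $J\subseteq S'$ monomial, where $S'$ drops the variable $x_i$; concretely $J$ is generated by the degree-$2$ generators of $I$ not divisible by $x_i$. I want $S'/J$ to be two-dimensional Cohen-Macaulay with minimal multiplicity. The dimension and Cohen-Macaulayness: use the short exact sequence $0\to (S/(I:x_i))(-2)\to S/I\to S'/J\to 0$. By part (1), $I:(x_i)=(x_j : j\in W^c)$ with $|W^c|=n-2$, so $S/(I:x_i)\cong K[x_i,x_k]$ is a polynomial ring in two variables, hence Cohen-Macaulay of dimension $2$; since $S/I$ is Cohen-Macaulay of dimension $2$, the long exact sequence in local cohomology (or the Auslander--Buchsbaum/depth chase on the short exact sequence, as in \cite[Prop.1.2.9]{BH}) gives $\depth S'/J\geq 2$, while $\dim S'/J\leq \dim S/I=2$, and $S'/J$ is visibly at least $2$-dimensional (it is not Artinian: $x_k$ and at least one further variable survive, since $x_ix_k\notin I$ implies nothing forces a power of $x_k$ alone). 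Hence $S'/J$ is Cohen-Macaulay of dimension $2$. Minimal multiplicity: compare Hilbert series through the same exact sequence — $H_{S'/J}(z)=H_{S/I}(z)-z^2 H_{K[x_i,x_k]}(z)=\dfrac{1+(n-2)z}{(1-z)^2}-\dfrac{z^2}{(1-z)^2}=\dfrac{1+(n-2)z-z^2}{(1-z)^2}=\dfrac{1+(n-3)z}{(1-z)}$, so the $h$-polynomial of $S'/J$ is $1+(n-3)z$, which is a short $h$-vector for an ideal of codimension $n-3$ in $S'=K[x_1,\dots,\widehat{x_i},\dots,x_n]$; thus $S'/J$ has minimal multiplicity.

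The main obstacle is the identification of $I:(x_i)$ in part (1): one must rule out "unexpected" generators of the colon ideal coming from quadric generators of $I$ that involve neither $x_i$ nor are products with $x_i$-partners, and the clean way to do this is to exploit the linear-syzygy hypothesis via Lemma~\ref{sizi} rather than to argue combinatorially on monomials, since an ad hoc monomial argument risks missing cases when $I$ has generators like $x_jx_\ell$ with both $x_j,x_\ell\in W$. Once part (1) is secured with the precise statement $I:(x_i)=(x_j:j\neq i,k)$, part (2) is a routine Hilbert-series and depth computation through the displayed short exact sequence.
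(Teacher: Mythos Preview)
Your overall architecture matches the paper's: use Lemma~\ref{sizi} to see that $I:(x_i)$ is generated by variables, then run the short exact sequence
\[
0\to \bigl(S/(I:x_i)\bigr)(-1)\xrightarrow{\cdot x_i} S/I\to S/(I+(x_i))\to 0
\]
for part~(2). But several steps do not go through as written.

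\textbf{Part (1).} First, you dropped $x_i$ itself: since $x_i^2\in I$ one has $x_i\in I:(x_i)$, so the colon is generated by $W^c\cup\{x_i\}$, not $W^c$; the correct count is $|W|=2$, not $|W|=1$ (compare Corollary~\ref{caroca}). More importantly, your ``Hilbert-function count'' to pin down the number of generating variables is circular: the exact sequence involves $H_{S'/J}$, which is what part~(2) is supposed to determine. The paper's argument is different and clean: once $I:(x_i)$ is generated by variables it is \emph{prime}, hence it is an associated prime of $S/I$ (it is $\operatorname{Ann}_{S/I}(\bar x_i)$), and Cohen--Macaulayness forces every associated prime to have codimension $n-2$.

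\textbf{Part (2).} Two problems. The shift is $(-1)$, not $(-2)$; with the correct shift the Hilbert series of $S'/J$ is
\[
\frac{1+(n-2)z}{(1-z)^2}-\frac{z}{(1-z)^2}=\frac{1+(n-3)z}{(1-z)^2},
\]
whereas your displayed computation $\frac{1+(n-2)z-z^2}{(1-z)^2}=\frac{1+(n-3)z}{1-z}$ is algebraically false and would make $S'/J$ one-dimensional. Second, the depth chase does not give $\depth S'/J\ge 2$: \cite[Prop.~1.2.9]{BH} only yields $\depth C\ge\min(\depth B,\depth A-1)=1$, and in the long exact sequence one needs injectivity of $H^2_\mm(A)\to H^2_\mm(B)$, which is not automatic (note $x_i$ kills $A$). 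The paper avoids this entirely: from the exact sequence one gets $\reg(S'/J)\le 1$, and then the short $h$-vector together with Lemma~\ref{CMvsCM} gives Cohen--Macaulayness directly. That regularity route is the missing idea in your part~(2).
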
 

\begin{proof} First we show that $I:(x_i)$ is generated by variables. Let $m$ be one of the generators
of $I$. We have to show that if $x_i$ does not divide
$m$ then there exists a variable $x_j$ such that $x_j|m$ and $x_ix_j\in I$.
Let $V$ be the set of the variables whose square is not in $I$ and let $Q$ be the
set of the variables whose square is in $I$. If $m$ is divisible by a variable in $Q$, then we are done by
Lemma \ref{sizi}(1). Otherwise, if $m$ is not divisible by a variable in $Q$, then $m=x_jx_k$ with
$x_j,x_k\in V$, $j\not=k$. By Lemma \ref{sizi}(2) we have that either $x_ix_j$ or $x_ix_k$ is in $I$. Knowing that
$I:(x_i)$ is generated by variables we deduce that $I:(x)$ is a prime ideal, so an associated prime of the Cohen-Macaulay ideal $I$. Hence the codimension of $I:(x_i)$ is $n-2$. This proves
(1). The standard short exact sequence:
$$0\to S/I:(x_i)(-1)\to S/I\to S'/J \to 0$$
shows that the Hilbert series of $S'/J$ is $1+(n-3)z/(1-z)^2$. Hence $S'/J$ has a short $h$-vector. 
Furthermore, the exact sequence implies that the regularity of $S'/J$ is $1$, that is, 
$\reg(J)=2$. By Lemma \ref{CMvsCM} we conclude that
$S'/J$ is Cohen-Macaulay. 
\end{proof}

\begin{corollary}\label{caroca} With the assumption of Lemma \ref{induct} and the notation of its proof, there exist $x_j, x_k\in V$ such that $I:(x_i)=( x_v : 1\leq v\leq n \mbox{ and } v\not\in
\{j,k\})$ and $x_jx_k\not\in I$. 
\end{corollary}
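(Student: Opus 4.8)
The plan is to read off the conclusion directly from the proof of Lemma \ref{induct}, observing that that proof already isolated everything we need. First I would recall that in the proof of Lemma \ref{induct}(1) we established that $I:(x_i)$ is generated by variables and has codimension $n-2$ in $S=K[x_1,\dots,x_n]$. A monomial ideal generated by variables and of codimension $n-2$ must be generated by exactly $n-2$ of the variables, say $I:(x_i) = (x_v : v \notin \{j,k\})$ for a unique pair of indices $j \neq k$ (necessarily $j,k \neq i$, since $x_i \in I \subseteq I:(x_i)$ would contradict codimension $n-2$... wait, $x_i^2 \in I$ only gives $x_i \in I:(x_i)$, so indeed $i \notin \{j,k\}$).

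Next I would argue that $x_j$ and $x_k$ lie in $V$, i.e. $x_j^2, x_k^2 \notin I$. Suppose for contradiction that $x_j^2 \in I$, i.e. $x_j \in Q$. Then since $x_j \notin I:(x_i)$ we have $x_ix_j \notin I$; but $x_i^2 \in I$ and $x_j^2 \in I$, so by Lemma \ref{sizi}(1) applied to the pair $x_i, x_j$ we get $x_ix_j \in I$, a contradiction. (Here I am using that $I$ is generated in degree $2$ with linear syzygies: $S/I$ is Cohen-Macaulay of dimension $2$ with minimal multiplicity, hence has a short $h$-vector $1 + (n-2)z$, hence by Lemma \ref{CMvsCM} has regularity $1$, so $I$ is generated in degree $2$ with linear resolution — in particular linear syzygies. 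This puts us in the hypothesis of Lemma \ref{sizi}.) The same argument with $k$ in place of $j$ shows $x_k \in V$.

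Finally I would check $x_jx_k \notin I$. If $x_jx_k \in I$, then since both $x_j^2, x_k^2 \notin I$, the monomial $x_jx_k$ would be a minimal generator of $I$ of the form $m = x_jx_k$ with $x_j, x_k \in V$; applying the argument from the proof of Lemma \ref{induct} (via Lemma \ref{sizi}(2) on $x_i^2 \in I$ and $x_jx_k \in I$) forces $x_ix_j \in I$ or $x_ix_k \in I$, i.e. $x_j \in I:(x_i)$ or $x_k \in I:(x_i)$, contradicting $I:(x_i) = (x_v : v \notin \{j,k\})$. Hence $x_jx_k \notin I$, completing the proof.

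There is essentially no obstacle here: the corollary is a bookkeeping consequence of facts already extracted in the proof of Lemma \ref{induct}. The only point requiring a moment's care is confirming that we are genuinely in the setting where Lemma \ref{sizi} applies — that is, that $I$ is generated in degree $2$ with linear syzygies — which follows from the short $h$-vector and Lemma \ref{CMvsCM} as indicated above.
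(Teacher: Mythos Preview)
Your proof is correct and follows essentially the same strategy as the paper, namely reading off the conclusion from Lemma~\ref{induct}. Two small points of comparison are worth recording. First, you explicitly verify that $x_j,x_k\in V$ via Lemma~\ref{sizi}(1), whereas the paper's proof leaves this implicit (it follows at once since every $x_q\in Q$ satisfies $x_ix_q\in I$ by Lemma~\ref{sizi}(1), hence $Q\subseteq I:(x_i)$). Second, for the final claim $x_jx_k\notin I$ the paper takes a shorter route than your appeal to Lemma~\ref{sizi}(2): it simply observes that $I\subseteq I:(x_i)=(x_v:v\notin\{j,k\})$, and the monomial $x_jx_k$ visibly does not lie in this prime ideal. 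Your argument via Lemma~\ref{sizi}(2) is also valid, just slightly less direct.
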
 

\begin{proof} We know by Lemma \ref{induct} that there are variables $x_j$ and $x_k$ so that 
$I:(x_i)=( x_v : 1\leq v\leq n \mbox{ and } v\not\in \{j,k\})$. If $x_jx_k\in I$, then $x_jx_k\in
I:(x_i)$. This is a contradiction. 
\end{proof} 

\begin{definition} Let $V$ and $Q$ be disjoint sets of variables. Let $G$ be a tree with vertex set $V$
and edge set $E$. Let $\phi:Q \to E$ be a map. Let $J$ be the square-free monomial ideal associated
with $G$. Let $H=(Q)^2+( xy : x\in Q , y\in V \mbox{ and } y\not\in \phi(x) )$. We define
$$I(G,\phi)=J+H.$$
\end{definition}

\begin{example} Let $V=\{v_1,v_2,v_3,v_4\}$ and $Q=\{q_1,q_2,q_3,q_4,q_5\}$. Let $G$ be the tree on
$V$ with edges $E=\{e_1, e_2, e_3\}$ with $e_1=\{v_1,v_2\}, e_2=\{v_1, v_3\}, e_3=\{v_3,v_4\}$
and let $\phi:Q\to E$ be the map sending $q_1,q_2,q_3,q_4,q_5$ to $e_2,e_1,e_1,e_3,e_2$, 
respectively. Then $J=(v_1v_4, v_2v_3, v_2v_4)$ and
$H=(q_1,q_2,q_3,q_4,q_5)^2+q_1(v_2,v_4)+q_2(v_3,v_4)+q_3(v_3,v_4)+q_4(v_1,v_2)+q_5(v_2,v_4)$.
\end{example} 

In the next proposition we achieve the first step of the classification. 

\begin{proposition} \label{class1}
Let $I\subset S$ be a monomial ideal. The following conditions are equivalent:
\begin{itemize} 
\item[(1)] there exist a tree $G$ and a map $\phi:Q\to E$ such that $I=I(G,\phi)$. 
\item[(2)] $S/I$ is a $2$-dimensional Cohen-Macaulay ring with minimal multiplicity. 
 \end{itemize} 
\end{proposition}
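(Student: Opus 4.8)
The plan is to prove the two implications separately, with the direction $(1)\Rightarrow(2)$ being a direct computation and $(2)\Rightarrow(1)$ proceeding by induction on the number of variables using Lemma~\ref{induct} and Corollary~\ref{caroca}.

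\medskip
\noindent\textbf{Proof of $(1)\Rightarrow(2)$.} Suppose $I=I(G,\phi)=J+H$ with $G$ a tree on vertex set $V$, $|V|=m$, $|E|=m-1$, and $Q$ a set of variables with $|Q|=q$; so $S$ has $n=m+q$ variables. I would first show directly that $S/I$ is Cohen--Macaulay of dimension $2$ by exhibiting an explicit s.o.p. or, more conveniently, by showing it has the right Hilbert series and is Cohen--Macaulay via Lemma~\ref{CMvsCM}. The generators of $I$ are all quadratic (or the degree-$3$ square-free monomials inherited from $J$), so after a quick check $\reg(I)\le 2$; combined with a computation of the $h$-vector this gives Cohen--Macaulayness. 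To get the Hilbert series, note that modulo $(Q)$ we recover $S'/J$ for the tree $G$, which by Lemma~\ref{trees} has Hilbert series $1+mz/(1-z)+(m-1)z^2/(1-z)^2$, hence short $h$-vector $1+(m-1)z$; and one checks that the contribution of the variables in $Q$ adds exactly $q$ to the linear coefficient (each $q_i$ is a nonzerodivisor-like addition because $\phi(q_i)$ is a genuine edge, i.e. $\{v,w\}$ with $vw\notin I$, so $q_i$ together with that edge behaves like a new pendant structure). The cleanest route is: order the variables in $Q$ and successively quotient, using the short exact sequences $0\to S/(I:q_i)(-1)\to S/I\to S/(I+(q_i))\to 0$, showing at each stage that $I:q_i$ is generated by $n-2$ variables (its two non-generators being the endpoints of the edge $\phi(q_i)$) so that the quotient has short $h$-vector and $\reg=2$, whence by Lemma~\ref{CMvsCM} everything in sight is Cohen--Macaulay. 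This reduces $(1)\Rightarrow(2)$ to the tree case already handled in Lemma~\ref{trees}.

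\medskip
\noindent\textbf{Proof of $(2)\Rightarrow(1)$.} I would induct on $n=\dim S$. If no variable has its square in $I$, then $I$ is square-free (any minimal generator $x_jx_k$ of degree $2$ survives, and degree-$3$ square-free monomials are the only other generators forced by minimal multiplicity), so $V$ is all of $S$, $Q=\emptyset$, and by Lemma~\ref{trees} $G$ is a tree and $I=I(G,\emptyset)$. Otherwise pick $x_i$ with $x_i^2\in I$. By Corollary~\ref{caroca} there are $x_j,x_k\in V$ with $I:(x_i)=(x_v: v\notin\{j,k\})$ and $x_jx_k\notin I$. By Lemma~\ref{induct}(2), writing $I+(x_i)=J'+(x_i)$ with $J'\subset S'=K[x_v:v\neq i]$, the ring $S'/J'$ is $2$-dimensional Cohen--Macaulay with minimal multiplicity, so by induction $J'=I(G',\phi')$ for a tree $G'$ on a vertex set $V'\subseteq\{x_v:v\neq i\}$ and a map $\phi':Q'\to E'$. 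The content of the inductive step is to reconstruct $I$ from this data: I claim one recovers $I=I(G,\phi)$ by taking $G=G'$ (same tree, on the same $V$, after identifying $V=V'$), $Q=Q'\cup\{x_i\}$, and $\phi=\phi'$ extended by $\phi(x_i)=\{x_j,x_k\}$ --- provided one first checks that $\{x_j,x_k\}$ is actually an edge of $G'$, i.e. $x_jx_k\notin J'$, and that $x_j,x_k\notin Q'$. The condition $x_jx_k\notin I$ from Corollary~\ref{caroca} combined with $x_jx_k\notin(x_i)$ gives $x_jx_k\notin J'$; and $x_j,x_k\in V$ means $x_j^2,x_k^2\notin I$, so they are not among the ``square vanishes'' variables and must be vertices of the tree rather than elements of $Q'$. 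Finally one verifies $I=J+H$ holds on the nose: every generator of $I$ either lies in $S'$ (giving the generators of $J'=I(G',\phi')$) or is divisible by $x_i$, and the $x_i$-divisible generators are exactly $x_i^2$ together with $x_ix_v$ for $v\notin\phi(x_i)=\{x_j,x_k\}$, which by Lemma~\ref{induct}(1) and Corollary~\ref{caroca} is precisely the contribution $(x_i)^2 + x_i(V\setminus\{x_j,x_k\})$ prescribed by $H$.

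\medskip
\noindent\textbf{Main obstacle.} The delicate point is the bookkeeping in the inductive step of $(2)\Rightarrow(1)$: one must be sure that passing to $S'/J'$ and back does not lose or create edges, that the ``pendant'' vertices $x_j,x_k$ attached to $x_i$ genuinely form an edge of the tree $G'$ (not merely a non-edge that happens to avoid $I$), and that no generator of $I$ of the mixed form $x_i x_v$ with $v\in Q'$ is overlooked --- this is where Lemma~\ref{sizi}, which forces any such generator to be accounted for, is essential. I expect the verification that $I(G,\phi)$ as reconstructed has exactly the stated generators, with no extras, to be the part requiring the most care, though it is ultimately routine given Lemmas~\ref{sizi} and~\ref{induct}.
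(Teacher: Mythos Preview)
Your proposal is correct and follows essentially the same approach as the paper: both directions proceed by induction on $|Q|$ (equivalently, on the number of variables), using the short exact sequence associated to a variable $q\in Q$ together with Lemma~\ref{induct} and Corollary~\ref{caroca}, and reducing to the square-free tree case of Lemma~\ref{trees}. The paper streamlines your final verification in $(2)\Rightarrow(1)$ by writing $I=J+q(I:q)$ directly, which packages the ``$x_i$-divisible generators'' bookkeeping you flag as the main obstacle into a single line; your concern about needing Lemma~\ref{sizi} explicitly is unnecessary, as its content is already absorbed into Lemma~\ref{induct} and Corollary~\ref{caroca}.
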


\begin{proof} First we show that every ideal of type $I(G,\phi)$ defines a Cohen-Macaulay, $2$-dimensional
ring with minimal multiplicity. We proceed by induction on the cardinality of $Q$. If $Q$ is empty, then the ideal
$I$ is $2$-dimensional Cohen-Macaulay with minimal multiplicity by Lemma \ref{trees}. Now assume
$Q$ is not empty and pick $q\in Q$. By construction $I(G,\phi):(q)$ is generated by $Q\cup V\setminus
\phi(q)$ and $I(G,\phi)+(q)=I(G,\phi')+(q)$ where $\phi'$ is the restriction of $\phi$ to
$Q'=Q\setminus \{q\}$. By induction we know that $I(G,\phi')\subset K[V,Q']$ is $2$-dimensional Cohen-Macaulay with minimal multiplicity. The short exact sequence 
$$0\to K[V,Q]/I(G,\phi):(q)(-1)\to K[V,Q]/I(G,\phi) \to K[V,Q']/I(G,\phi')\to 0$$
shows that $I(G,\phi)$ is $2$-dimensional with minimal multiplicity and $K[V,Q]/I(G,\phi)$ is Cohen-Macaulay since
both $K[V,Q]/I(G,\phi):(q)$ and $K[V,Q']/I(G,\phi')$ are $2$-dimensional and Cohen-Macaulay. 

We show now that every $2$-dimensional Cohen-Macaulay monomial ideal $I$ with minimal multiplicity is of the
form $I(G,\phi)$. Let $Q$ be the set of variables whose square is in $I$ and $V$ the remaining
variables. We argue by induction on the cardinality of $Q$. If $Q$ is empty, then $I$ is square-free
and we know that $I$ is associated to a tree. If $Q$ is not empty, let $q\in Q$. Write $I+(q)=J+(q)$
with $J$ a monomial ideal not involving the variable
$q$. By Lemma \ref{induct} we know that $J\subset K[V,Q\setminus\{q\}]$ is $2$-dimensional Cohen-Macaulay with minimal
multiplicity. Therefore there exists a tree $G$ with vertices $V$ and edges $E$, and a map $\phi':
Q\setminus\{q\}\to E$ so that $J=I(G,\phi')$. On the other hand $I=J+q(I:q)$ and by Corollary \ref{caroca} there 
are $x,y\in V$ so that $I:(q)=(Q\cup V\setminus \{x,y\})$ with $xy\not\in I$. Hence $\{x,y\}$ belongs
to $E$ and we extend $\phi'$ to $Q$ by sending $q$ to $\{x,y\}$. Call $\phi$ the resulting map 
$\phi:Q\to E$. By construction, $I(G,\phi)=I$. 
\end{proof}

Now we come to the second step of our classification: describe the Cohen-Macaulay monomial initial ideals of $P$. Let $I$ be a monomial Cohen-Macaulay initial ideal of $P$. Since $P$ is $2$-dimensional with a short $h$-vector then also $I$ is so. By what we have seen above, there exist $G$ and $\phi$ so that $I=I(G,\phi)$. We want to describe which pairs $ (G, \phi) $ arise in this way. 

\begin{lemma}
\label{regole} 
 Let $I$ be a monomial initial ideal of $P$ generated in degree $2$. 
\begin{itemize} 
\item[(1)] For every $v$, $0\leq v\leq 2d,$ there exists exactly one monomial $t_it_j$ such that
$i+j=v$ and $t_it_j\not\in I$. In particular, $t_0^2$ and $t_d^2$ are not in $I$. 
\item[(2)] Let $0\leq i<j<k\leq d$. Assume that $t_i^2, t_j^2$ and $t_k^2$ are not in $I$. Then
$t_it_k\in I$. 
\item[(3)] Let $0\leq i<j<k\leq d$. Assume that $t_i^2\not\in I$ and $t_st_j\not\in I$ for every $s$ with $i\leq s\leq j$. Then $t_it_k\in I$. 
\end{itemize} 
\end{lemma}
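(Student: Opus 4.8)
The plan is to work with $I=\ini_\tau(P)$ for a term order $\tau$ (any monomial initial ideal of $P$ arises this way), writing $|\alpha|=\sum\alpha_i$ and $\omega(\alpha)=\sum i\alpha_i$ for a monomial $t^\alpha$; two monomials lie in the same \emph{fiber} (have the same image $x^{d|\alpha|-\omega(\alpha)}y^{\omega(\alpha)}$ in $K[x,y]$) iff they agree in degree and weight, and $P$ is generated by the binomials $t^\alpha-t^\beta$ of that type. The first thing I would establish is a strengthening of (1): for every $n\ge 0$ and $0\le v\le nd$ there is exactly one standard monomial (monomial $\notin I$) of degree $n$ and weight $v$. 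This holds because the standard monomials are a $K$-basis of $S/P$, hence of $(S/P)_n\cong\bigoplus_{v=0}^{nd}Kx^{nd-v}y^{v}$; monomials of one fiber become equal in $S/P$, so a fiber contains at most one standard monomial, and as there are $nd+1$ standard monomials of degree $n$ and $nd+1$ nonempty fibers, there is exactly one per fiber. Part (1) is the case $n=2$, and since the weight-$0$ and weight-$2d$ fibers in degree $2$ are $\{t_0^2\}$ and $\{t_d^2\}$, these are standard.

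For (2) the key is the binomial $g(a,b,c):=t_a^{c-b}t_c^{b-a}-t_b^{c-a}\in P$ (valid for $a<b<c$: both terms have degree $c-a$ and weight $b(c-a)$). Applying it with $(a,b,c)=(i,j,k)$, the monomial $\ini_\tau(g(i,j,k))$ lies in $I$; it cannot be $t_j^{k-i}$, for then (using $k-i\ge 2$ and that $I$ is generated in degree $2$) the only degree-$2$ monomial dividing it, $t_j^2$, would lie in $I$, against the hypothesis. Hence $t_i^{k-j}t_k^{j-i}\in I$, and since $I$ is generated in degree $2$ some degree-$2$ monomial divides it and lies in $I$; the only candidates are $t_it_k$ and, possibly, $t_i^2$ and $t_k^2$, and as the latter two are not in $I$ we get $t_it_k\in I$.

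For (3) I would first reduce. If $i+k\le 2j$, set $l=i+k-j$; then $i<l\le j$, so $t_jt_l\notin I$ by hypothesis, and since $t_jt_l\ne t_it_k$ lie in the same weight-$(i+k)$ fiber, the strengthened (1) forces $t_it_k\in I$. So assume $i+k>2j$ (hence $k\ge j+2$); if moreover $t_k^2\notin I$, then $t_i^2,t_j^2,t_k^2\notin I$ (the $s=j$ case gives $t_j^2\notin I$) and (2) applies. This leaves the case $i+k>2j$, $t_k^2\in I$. When $j=i+1$, the binomial $g(i,i+1,k)=t_i^{k-i-1}t_k-t_{i+1}^{k-i}$ does it: its leading monomial is in $I$ and cannot be $t_{i+1}^{k-i}$ (else $t_{i+1}^2=t_j^2\in I$), so $t_i^{k-i-1}t_k\in I$, and then $t_it_k\in I$ as before since $t_i^2\notin I$. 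When $j\ge i+2$, I would first note that $g(i,i+1,j)$ forces $t_{i+1}^2\in I$ (its leading monomial is in $I$ and is not $t_i^{j-i-1}t_j$, all of whose degree-$2$ divisors — $t_it_j$, and $t_i^2$ if present — are standard), and then argue by induction on $k-j$, combining the binomials $g(s,j,k)$ and $g(s,s+1,k)$ for $s$ decreasing from $j$ to $i$ with the strengthened (1) in degree $3$ and the inductive hypothesis $t_it_{k'}\in I$ for $j<k'<k$. I expect the main obstacle to be exactly this last case: $g(i,j,k)$ only gives $t_i^{k-j}t_k^{j-i}\in I$, a containment which for $j-i\ge 2$ might be produced solely by the divisor $t_k^2$; to exclude this one has to descend along $s$ keeping the exponent of $t_k$ equal to $1$ — which forces a conclusion about $t_st_k$ rather than about $t_k^2$ — and then transfer the information back to $t_it_k$ using the inductive hypothesis together with the uniqueness of the standard monomial in each graded–weighted piece.
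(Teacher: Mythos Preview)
Your treatments of (1) and (2) coincide with the paper's (the strengthened (1) is exactly the bigrading argument, and (2) is identical). For (3), your first sub-case ($i+k\le 2j$) is already the paper's argument specialised to degree two: with $s=i+k-j\in\{i,\dots,j\}$ one has $t_it_k-t_st_j\in P$ and $t_st_j\notin I$, so $t_it_k\in I$.

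The gap is your fourth case ($i+k>2j$, $t_k^2\in I$, $j\ge i+2$). You correctly diagnose the obstacle --- in $g(i,j,k)$ the exponent of $t_k$ is $j-i\ge 2$, so $t_i^{k-j}t_k^{j-i}\in I$ could be explained by the divisor $t_k^2$ --- but the descent/induction you sketch is not carried out and does not obviously close: knowing $t_{i+1}^2\in I$ and $t_it_{k'}\in I$ for $j<k'<k$ does not by itself pin down which monomial in the relevant degree-$3$ fiber is standard, and the binomials $g(s,j,k)$ you list again have $t_k$-exponent $\ge 2$ once $s<j-1$.

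The paper avoids all of this with one move: replace $g(i,j,k)$ by a binomial whose $t_k$-exponent is $1$ from the outset. Namely $t_i^{\,a}t_k-t_st_j^{\,a}\in P$ whenever $s=k-a(j-i)$, and one needs $i\le s\le j$, i.e.
\[
\frac{k-j}{\,j-i\,}\ \le\ a\ \le\ \frac{k-i}{\,j-i\,}.
\]
This closed interval has length exactly $1$, hence contains an integer, which is positive since $k>j$. For such $a$, every degree-$2$ divisor of $t_st_j^{\,a}$ is $t_st_j$ or $t_j^2$, both standard by hypothesis, so $t_st_j^{\,a}\notin I$; therefore $t_i^{\,a}t_k\in I$, and since its only degree-$2$ divisors are $t_i^2\notin I$ and $t_it_k$, you conclude $t_it_k\in I$. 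Your first sub-case is precisely $a=1$; the point is that allowing $a\ge 2$ handles $i+k>2j$ uniformly, with no need to know anything about $t_k^2$ or to induct.
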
 
\begin{proof} (1) By definition, $K[t_0,\dots,t_d]/P$ is bigraded by setting $\deg(t_i)=(d-i,i)$ and
$1$-dimensional in each bi-homogeneous component. Therefore also the ring $K[t_0,\dots,t_d]/I$ is bigraded 
 and $1$-dimensional in each bi-homogeneous component. This proves the assertion. 

(2) Since
$t_i^{k-j}t_k^{j-i}-t_j^{k-i}\in P$ we have that either 
$t_i^{k-j}t_k^{j-i}$ or $t_j^{k-i}$ belongs to $I$. If $t_j^{k-i}$ belongs to $I$, since $I$ is
generated in degree $2$, then
$t_j^2$ belongs to $I$, a contradiction. So we have $t_i^{k-j}t_k^{j-i}\in I$. Since $t_i^2$ and
$t_k^2$ are not in $I$ we may conclude that $t_it_k\in I$. 

(3) We show that there exist a positive integer $a$ and $i\leq s\leq j$ so that 
$t_i^at_k-t_st_j^a\in P$. Then since, by assumption, the factors of degree $2$ of $t_st_j^a$ are 
not in $I$, we have that a factor of degree $2$ of $t_i^at_k$ is in $I$. Since $t_i^2\not\in I$, it follows that $t_it_k\in
I$. To show that $a$ and $s$ as above exist, note that the condition $t_i^at_k-t_st_j^a\in P$
translates into $s=k-a(j-i)$. So we have to show that there exists a positive integer $a$ such that 
$$i\leq k-a(j-i) \leq j,$$
equivalently 
$$\frac{k-j}{j-i} \leq a \leq \frac{k-i}{j-i}.$$
But this interval has length $1$ and so it contains an integer which is positive since $i<j<k$. 
\end{proof} 

The matrix $T_d$ appearing in the following theorem is defined in Section \ref{notpre}. 
 
\begin{theorem}\label{moninidCM} Let $I$ be a monomial ideal of $K[t_0,\dots, t_d]$. 
The following conditions are equivalent: 
\begin{itemize} 
\item[(1)] There exists a sequence $0=i_0<i_1<i_2\dots<i_k=d$ such that $I(G,\phi)=I$ 
where $V=\{t_{i_0},
t_{i_1}, \dots, t_{i_k}\}$, $Q=\{t_0,\dots,t_d\}\setminus V$, $G$ is the tree (a line) 
with vertex set $V$ and with edge set $E=\{ \{t_{i_j}, t_{i_{j+1}}\} : 
0\leq j\leq k-1\}$ and the map $\phi:Q\to E$ sends $t_s$ to $\{t_{i_j}, t_{i_{j+1}}\}$
where $i_j<s<i_{j+1}$. 
\item[(2)] There exists a sequence $0=i_0<i_1<i_2\dots<i_k=d$ such that $I$ is generated by 
 \begin{itemize}
\item[(i)] the main diagonals $t_{v-1}t_r$ of the $2$-minors of the matrix $T_d$ with column indices
$v,r$ such that $v\leq i_j< r$ for some $j$,
\item[(ii)] the anti-diagonals $t_{v}t_{r-1}$ of the $2$-minors of the matrix $T_d$ with column indices
$v,r$ such that $i_j<v<r\leq i_{j+1}$ for some $j$. 
 \end{itemize} 
\item[(3)] $I$ is a Cohen-Macaulay initial ideal of $P$. 
\end{itemize} 
\end{theorem}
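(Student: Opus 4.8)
The plan is to prove $(1)\Leftrightarrow(2)$, then $(2)\Rightarrow(3)$, then $(3)\Rightarrow(1)$, the last being the substantial direction.

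\emph{Step 1: $(1)\Leftrightarrow(2)$.} This is an unwinding of the definition of $I(G,\phi)$. When $G$ is the line on $t_{i_0},\dots,t_{i_k}$, every triple of its vertices contains a non-edge, so the Stanley--Reisner ideal $J\subset K[V]$ has no minimal generator of degree $3$ and is minimally generated by the products $t_{i_p}t_{i_q}$ with $|p-q|\ge 2$; writing $v-1=i_p$, $r=i_q$ these are exactly the main diagonals $t_{v-1}t_r$ of (2)(i) whose two column indices both lie in $V$. The ideal $H=(Q)^2+(t_st_{i_c}:i_c\notin\phi(t_s))$ contributes the remaining generators: the squares $t_s^2$ and products $t_st_{s'}$ with $t_s,t_{s'}\in Q$, together with the $t_st_{i_c}$ for $t_s\in Q$, $i_c\in V$, $i_c\notin\{i_j,i_{j+1}\}$ where $i_j<s<i_{j+1}$. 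Splitting each $2$-minor $t_{v-1}t_r-t_vt_{r-1}$ of $T_d$ according to whether its column range $\{v,\dots,r-1\}$ contains some $i_j$ (then one takes the main diagonal, case (i)) or is contained in a single interval $(i_j,i_{j+1}]$ (then the anti-diagonal, case (ii)), one checks these two families together give exactly the generating set in (2). The only care needed is in the boundary cases $v-1=i_j$ and $r=i_{j+1}$.

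\emph{Step 2: $(2)\Rightarrow(3)$.} Fix the sequence $0=i_0<\dots<i_k=d$ and choose $\aa\in\QQ^{d+1}_{\ge 0}$ whose consecutive differences $\delta_m:=a_m-a_{m-1}$ are strictly decreasing inside each block $\{i_{j-1}+1,\dots,i_j\}$ and jump far upward at each $i_j$ (for instance $\delta_m=jN-m$ for $i_{j-1}<m\le i_j$, with $N>d$). For the $2$-minor with columns $v<r$ the monomials $t_{v-1}t_r$ and $t_vt_{r-1}$ have equal total degree and $\deg_{\aa}(t_{v-1}t_r)-\deg_{\aa}(t_vt_{r-1})=\delta_r-\delta_v$, which is positive exactly when $v$ and $r$ lie in different blocks, i.e. when $\{v,\dots,r-1\}$ meets $\{i_0,\dots,i_k\}$, and negative otherwise; in particular there is never a tie. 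Hence $\ini_\aa$ of this minor is precisely the generator attached to it in (2), so $I\subseteq\ini_\aa(P)$ since $P$ is generated by its $2$-minors. By Proposition \ref{class1}, $S/I$ is Cohen--Macaulay of dimension $2$ with minimal multiplicity, hence has the short $h$-vector $1+(d-1)z$ — the same as $S/P$, and therefore the same Hilbert function as $S/\ini_\aa(P)$. Together with $I\subseteq\ini_\aa(P)$ this forces $I=\ini_\aa(P)$; as $I$ is monomial, $I=\ini_{\tau\aa}(P)$ for any term order $\tau$, and $I$ is Cohen--Macaulay by Proposition \ref{class1}. This is (3). I would phrase it this way precisely to avoid having to show that the $2$-minors form a Gr\"obner basis for $\aa$: the Hilbert-function comparison sidesteps that.

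\emph{Step 3: $(3)\Rightarrow(1)$.} Let $I=\ini_\tau(P)$ be Cohen--Macaulay. Since $S/I$ has the Hilbert series of $S/P$, it is $2$-dimensional of multiplicity $d$, hence Cohen--Macaulay of minimal multiplicity, so $\reg S/I=1$, $\reg I=2$, and since $I$ contains no linear form it is generated in degree $2$; thus both Lemma \ref{regole} and Proposition \ref{class1} apply. The latter gives $I=I(G,\phi)$ for a tree $G$ and a map $\phi$. Set $V=\{t_c:t_c^2\notin I\}$; by Lemma \ref{regole}(1), $t_0,t_d\in V$, so write $V=\{t_{i_0},\dots,t_{i_k}\}$ with $0=i_0<\dots<i_k=d$. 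For $p<q<r$ we have $t_{i_p}^2,t_{i_q}^2,t_{i_r}^2\notin I$, so Lemma \ref{regole}(2) gives $t_{i_p}t_{i_r}\in I$; since for $I(G,\phi)$ a product of two vertices lies in $I$ iff the pair is a non-edge, no two non-consecutive vertices are joined, i.e. $E(G)\subseteq\{\{t_{i_p},t_{i_{p+1}}\}:0\le p<k\}$; as $G$ is a tree on $k+1$ vertices it has $k$ edges, so $G$ is this line. It remains to identify $\phi$, and this is the only genuinely delicate point. For $t_s\in Q$ with $i_j<s<i_{j+1}$ one must show $\phi(t_s)=\{t_{i_j},t_{i_{j+1}}\}$; here I would use Lemma \ref{regole}(1) in its sharp form, that the $2d+1$ monomials lying outside $I$ — the $k+1$ vertex squares, the $k$ edge monomials, and the two products of $t_s$ with the endpoints of $\phi(t_s)$ for each $t_s\in Q$ — have pairwise distinct index sums and so realize each of $0,\dots,2d$ exactly once. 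A routine analysis of which index sums occur (with Lemma \ref{regole}(3) available for the residual cases) shows this fails for any non-bracketing $\phi$, because a variable $t_s\in Q$ whose $\phi$-edge lies to one side of $s$ duplicates index sums on that side while leaving the sums near $s+i_j$ and $s+i_{j+1}$ uncovered. This yields the pair $(G,\phi)$ of (1). Everything outside this last step — the bookkeeping in Step 1, the weight and Hilbert-function argument in Step 2, and the identification of $G$ as a line via Lemma \ref{regole}(1)--(2) — is straightforward.
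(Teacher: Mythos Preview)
Your overall architecture matches the paper's: $(1)\Leftrightarrow(2)$ by unpacking the definition of $I(G,\phi)$, $(2)\Rightarrow(3)$ by exhibiting a weight and comparing Hilbert functions (the paper builds a permutation vector as in Remark~\ref{next}, you use $\delta_m=jN-m$; both work, and your observation that the Hilbert-function comparison lets one avoid verifying the Gr\"obner-basis property directly is exactly what the paper does), and $(3)\Rightarrow(1)$ via Proposition~\ref{class1} and Lemma~\ref{regole}. Through the identification of $G$ as the line using Lemma~\ref{regole}(2), you and the paper coincide.

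The gap is in identifying $\phi$. You propose to argue that the $2d+1$ degree-$2$ monomials outside $I$ realize each index sum $0,\dots,2d$ exactly once (Lemma~\ref{regole}(1)) and that a non-bracketing $\phi$ would ``duplicate index sums on one side''; but this last claim is never established, and ``routine analysis'' stands in for the actual argument. The difficulty is that a single wrong value of $\phi$ need not by itself produce an obvious duplication --- one has to control what happens at \emph{all} the other $t_s$ simultaneously. The paper resolves this with a minimal-counterexample device you do not have: take the \emph{smallest} $j\in Q$ for which $\phi'(t_j)$ is wrong, say $i_r<j<i_{r+1}$; minimality guarantees $t_st_{i_{v+1}}\notin I$ for all $i_v\le s\le i_{v+1}$ and $v<r$, which is precisely the hypothesis needed to invoke Lemma~\ref{regole}(3) and conclude $t_{i_v}t_q\in I$ for every $v<r$ and $q\ge j$. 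This forces $\phi'(t_j)$ to lie strictly to the right of $i_r$, hence $t_{i_r}t_j\in I$; then the unique $t_at_b\notin I$ with $a+b=i_r+j$ is run through a three-case split ($i_r<a\le b<j$; $a<i_r$ with $a\in V$; $a<i_r$ with $a\in Q$), each contradicting either $(Q)^2\subset I$, the Claim, or the inductively known value of $\phi'(t_a)$. Your sketch names the right lemmas but omits this mechanism; without it the determination of $\phi$ is not proved.
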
 

\begin{proof} 
To prove that (1) and (2) are equivalent is just a direct check. 
To prove that (1) and (2) imply (3), it is enough to describe a term order
$\tau$ such that $\ini_\tau(P)=I(G,\phi)$. Indeed, the inclusion $\ini_\tau(P)\supseteq I(G,\phi)$ is
enough because the two ideals have the same Hilbert function. To do so consider a vector $b=(b_1,b_2,\dots, b_d)$ such that $b_r>b_v$ if $v\leq i_j< r$ for some $j$ and $b_v>b_r$ if $i_j<v<r\leq i_{j+1}$ for some $j$. In Remark \ref{next} we will show a canonical way to construct such a vector. Define $\aa=(a_0,a_1,\dots,a_d)$ by setting $a_0=0$ and $a_i=\sum_{j=1}^i
b_j$. Consider a term order $\tau$ refining (no matter how) the order defined by the vector $\aa$. We claim that the initial term with respect to $\tau$ of the minor with column indices $v,r$
($v<r$) of the matrix $T_d$ is the one prescribed by (2). The minor with those column indices is
$t_{v-1}t_r-t_vt_{r-1}$. The weights with respect to $\aa$ of the two terms are $a_{v-1}+a_r$
and $a_{v}+a_{r-1}$. Hence $a_{v-1}+a_r>a_{v}+a_{r-1}$ iff $b_r>b_v$ and $a_{v-1}+a_r<a_{v}+a_{r-1}$
 iff $b_r<b_v$. Therefore, by construction, the initial forms of the minors of $T_d$ with respect to $\aa$ are exactly the monomials prescribed by (2). 

We prove now that (3) implies (1). Assume that $I$ is a Cohen-Macaulay initial ideal of $P$. Then $I$ has
regularity $2$, in particular is generated by elements of degree $2$. Set
$V=\{ t_i : t_i^2 \not\in I\}$ and $Q=\{ t_i : t_i^2 \in I\}$. We know by Lemma \ref{regole}(1) that $t_0$
and $t_d$ are in $V$. So we may write $V=\{t_{i_0},t_{i_1},\dots, t_{i_k}\}$
with $0=i_0<i_1<i_2\dots<i_k=d$. Since $I$ is $2$-dimensional Cohen-Macaulay with minimal multiplicity, it has
the form $I=I(G', \phi')$. We prove that
$G'=G$ and $\phi'=\phi$ where $G$ and $\phi$ are those described in (1). Note that, by 
Lemma \ref{regole}(2), we have that $t_{i_v}t_{i_r}\in I$ whenever $r-v>1$ so that $\{t_v, t_r\}$ is not an
edge of $G'$. This implies that the underlying tree $G'$ is exactly the line $G$. It remains to prove
that $\phi=\phi'$. In other words, we have to prove that for every $t_j\in Q$, say $i_r<j<i_{r+1}$,
one has $t_jt_{i_r}\not\in I$ and $t_jt_{i_{r+1}}\not\in I$. By contradiction, let $j$ be the
smallest element of $Q$ that does not satisfy the required condition, say $i_r<j<i_{r+1}$. 
Now we claim the following: \medskip 

\noindent { \bf Claim} If $0 \leq v<r$ and $q\geq j$ then $t_{i_v}t_q\in I$. 
\medskip 

To prove the claim note that, by the choice of $j$, we know that $t_{s}t_{i_{v+1}}\not\in I$ 
for every $i_v\leq s \leq i_{v+1}$ and $i_{v+1}\leq i_r<j$. Therefore we may apply Lemma \ref{regole}(3) to
the indices $i_v, i_{v+1}, q$ and we conclude that $t_{i_v}t_q\in I$. 

In particular, for $q=j$ the claim says that $t_{i_v}t_j\in I$ for every $v<r$. So $\phi'(t_j)$ must be
an edge of the form $\{t_{i_u}, t_{i_{u+1}} \}$ with $u>r$. It follows that $t_{i_r}t_j\in I$. 
But, according to Lemma \ref{regole}(1), there exists (exactly one, but we do not need this) a 
monomial $t_at_b$ (say $a\leq b$) such that $a+b=i_r+j$ and $t_at_b\not\in I$. We distinguish now
three cases: 

\medskip 

\noindent{ \bf Case} 1) 
 $i_r<a$ and $b<j$, so that $a$ and $b$ are both in $Q$. A contradiction (the square of $Q$ is
contained in $I$). 

\medskip 

\noindent{ \bf Case} 2) $a<i_r$, $b>j$ and $a\in V$. The claim above says
that $t_at_b\in I$. A contradiction.

\medskip 

\noindent{ \bf Case} 3) $a<i_r$, $b>j$ and $a\in Q$ . Say $i_u<a<i_{u+1}$. By induction we know
that $\{ t_v : t_vt_a\not\in I\}=\{ t_{i_u}, t_{i_{u+1}}\}$ and so $b>j>i_r\geq i_{u+1}$. Therefore 
$t_at_b\in I$. A contradiction. 

This concludes the proof. 
\end{proof}

\begin{remark}\label{next}
Given $0=i_0<i_1<i_2\dots<i_k=d$, in the proof of Theorem \ref{moninidCM} we have used a vector $b=(b_1,b_2,\dots,b_d)\in \QQ_{\geq 0}^d$ with the property that $b_r>b_v$ if $v\leq i_j< r$ for some $j$ and $b_v>b_r$ if $i_j<v<r\leq i_{j+1}$ for some $j$. Of course there are many vectors with that property. But there is just one ``permutation" vector (i.e. its entries are the number $1,\dots,d$ permuted in some way) with that property. 
It arises as following: for
$j=1,\dots, k$ consider the vector 
$c_j=(i_j,i_j-1,\dots, i_{j-1}+1)$ and define the vector $b$ to be the
concatenation of $c_1,c_2\dots,c_k$. 
\end{remark}

The following theorem summarizes what we have proved so far. 

\begin{theorem} \label{riassunto} 
\begin{itemize} 
\item[(1)] The ideal $P$ has exactly $2^{d-1}$ distinct Cohen-Macaulay monomial initial ideals.
\item[(2 )] They are in bijective correspondence with the sequences $0=i_0<i_1<\dots<i_k=d$, i.e. 
with their radical. 
\item[(3)] Each of them can be obtained with a term order associated to a vector
$\aa=(a_0,a_1,\dots,a_d)\in \NN^{d+1}$ with $0=a_0<a_1<\dots<a_d$. 
\item[(4)] Each of them is obtained by taking the appropriate initial terms of the $2$-minors of
the matrix $T_d$. 
\item[(5)] The reduced
Gr\"obner basis of $P$ giving the Cohen-Macaulay initial ideal corresponding to the sequence 
$0=i_0<i_1<i_2\dots<i_k=d$ is the set of polynomials 
$$\begin{array} {ll} 
\underline{t_st_r}-t_{i_v}t_{s+r-i_v} & \mbox{ if } \quad 2i_v\leq s+r\leq i_v+i_{v+1}\\
\underline{t_st_r}-t_{i_{v+1}}t_{s+r-i_{v+1}} & \mbox{ if } \quad i_v+i_{v+1}\leq s+r\leq 2i_{v+1}
\end{array} 
$$
where the initial terms are underlined.
\end{itemize} 
\end{theorem}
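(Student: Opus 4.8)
The plan is to assemble Theorem \ref{riassunto} as a bookkeeping corollary of Theorem \ref{moninidCM} together with Remark \ref{next}, so the proof should be short and essentially combinatorial. First I would observe that parts (2), (3) and (4) are immediate from the equivalence (1)$\Leftrightarrow$(3) in Theorem \ref{moninidCM}: that equivalence shows that the Cohen-Macaulay monomial initial ideals of $P$ are exactly the ideals $I(G,\phi)$ built from a sequence $0=i_0<i_1<\dots<i_k=d$, which gives the parametrization in (2); moreover distinct sequences give distinct ideals because the sequence is recovered from $I$ as $\{\,i : t_i^2\notin I\,\}$ (Lemma \ref{regole}(1) guarantees $0$ and $d$ are always in this set, matching $i_0=0$, $i_k=d$), and the radical of $I(G,\phi)$ is the Stanley-Reisner ideal of the line $G$, whence the phrase ``i.e. with their radical''. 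For (3), the construction in the proof of Theorem \ref{moninidCM} (refined in Remark \ref{next}) produces a vector $\aa$ with $0=a_0<a_1<\dots<a_d$ by partial-summing a positive permutation vector $b$, and a term order refining $\aa$ realizes the ideal; part (4) is literally condition (2)(i)--(ii) of Theorem \ref{moninidCM}.

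Next I would count: the sequences $0=i_0<i_1<\dots<i_k=d$ are in bijection with subsets $\{i_1,\dots,i_{k-1}\}$ of $\{1,2,\dots,d-1\}$ (the intermediate indices, with $k$ ranging over all allowed values), and there are exactly $2^{d-1}$ such subsets. Combined with the injectivity of the map (sequence)$\mapsto I(G,\phi)$ just noted, this yields part (1): exactly $2^{d-1}$ distinct Cohen-Macaulay monomial initial ideals.

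For part (5) I would take the sequence $0=i_0<\dots<i_k=d$ and the associated term order, then exhibit explicitly the claimed set of binomials and check three things. First, each listed polynomial lies in $P$: the binomial $t_st_r-t_{i_v}t_{s+r-i_v}$ is a difference of monomials whose exponent vectors agree in total degree ($2=2$) and in weighted degree ($(d-s)+(d-r)$ versus $(d-i_v)+(d-(s+r-i_v))$, i.e. $s+r$ on both sides), so it is one of the binomials in $P$ listed after Definition \ref{P}, provided the indices $i_v$ and $s+r-i_v$ are legitimate, which is exactly what the range conditions $2i_v\le s+r\le i_v+i_{v+1}$ (resp. $i_v+i_{v+1}\le s+r\le 2i_{v+1}$) ensure — they force $s+r-i_v$ to lie between $i_v$ and $i_{v+1}$. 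Second, $t_st_r$ is the initial term: this follows because the weight argument in the proof of Theorem \ref{moninidCM} shows $\ini_\aa(t_{v-1}t_r-t_vt_{r-1})$ is the monomial prescribed by condition (2), and a telescoping/divisibility argument shows every degree-$2$ generator of $\ini_\tau(P)=I(G,\phi)$ reduces against these; concretely, one checks the underlined monomials are precisely the degree-$2$ monomials in $I$ listed in Theorem \ref{moninidCM}(2). Third, this set is a Gröbner basis and is reduced: it generates $P$ because its leading terms generate $\ini_\tau(P)=I(G,\phi)$ (they are exactly the minimal degree-$2$ generators of $I(G,\phi)$, as $I(G,\phi)$ is generated in degree $2$), so by the standard criterion it is a Gröbner basis; it is reduced because the tail $t_{i_v}t_{s+r-i_v}$ (a product of one ``$V$-variable'' and one arbitrary variable lying in the span of the corresponding edge) is not divisible by any leading term $t_st_r$ with $t_s^2,t_r^2\in I$-pattern, which one verifies from the explicit description of $V$, $Q$, and $\phi$.

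The main obstacle is part (5), and within it the verification that the displayed list is \emph{exactly} the reduced Gröbner basis — not just a Gröbner basis. Showing it is a Gröbner basis is routine (the leading terms are forced by the Hilbert function argument already used). The finicky point is matching the tails: one must confirm that for each degree-$2$ monomial in $I(G,\phi)$ there is one and only one binomial in the list with that leading term, that its tail is reduced, and that the two range conditions in (5) partition the relevant pairs $(s,r)$ without overlap or omission (the overlap at $s+r=i_v+i_{v+1}$ is harmless since both formulas then give the same binomial, and one should remark on this). I expect the cleanest route is to index everything by the value $s+r=:v$, use Lemma \ref{regole}(1) (unique non-member monomial in each bidegree) to pin down the tail, and then read off the ranges from the requirement that the tail's non-$V$ factor sit strictly inside the correct edge of the line $G$.
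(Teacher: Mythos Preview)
Your proposal is correct and matches the paper's treatment: the paper does not give a separate proof of Theorem \ref{riassunto} at all, but simply introduces it with ``The following theorem summarizes what we have proved so far,'' so your plan of reading off (1)--(4) from Theorem \ref{moninidCM} and Remark \ref{next} and then checking (5) directly is exactly what is intended. Your added detail for (5)---using Lemma \ref{regole}(1) to identify the tail as the unique bidegree-$(s+r)$ monomial outside $I$, and noting the harmless overlap at $s+r=i_v+i_{v+1}$---is more than the paper supplies and is the right way to fill it in.
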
 

 \begin{definition}\label{thecone}
For every sequence $\ii=(i_0,i_1,\dots,i_k)$ with $0=i_0<i_1<\dots<i_k=d$ we denote by 
$C(\ii)$ the open cone in $\QQ_{\geq 0}^{d+1}$ of the points $(a_0,\dots,a_d)$ satisfying the inequalities 
$$\begin{array} {ll} 
a_s+a_r > a_{i_v}+a_{s+r-i_v} & \mbox{ if } 2 i_v\leq s+r \leq i_v+i_{v+1},\\
a_s+a_r >a_{i_{v+1}}+a_{s+r-i_{v+1}} & \mbox{ if } i_v+i_{v+1}\leq s+r \leq 2i_{v+1},
\end{array} 
$$
and by $\ol{C(\ii)}$ the corresponding closed cone, that is, the subset of $\QQ_{\geq 0}^{d+1}$ described by the inequalities above where $>$ is replaced throughout by $\geq$. 
\end{definition} 

 We are ready to state and prove the main theorem of this section. 

\begin{theorem}\label{nonmon} 
Let $\aa\in \QQ_{\geq 0}^{d+1}$. Then $\ini_\aa(P)$ is Cohen-Macaulay if and only if $\ini_\aa(P)$ has a Cohen-Macaulay initial monomial ideal. In other words, 
$$\{ \aa\in \QQ_{\geq 0}^{d+1} : \ini_\aa(P) \mbox{ is Cohen-Macaulay}\}=\cup_\ii \ \ol{C(\ii)}$$
where the union is extended to the set of the $2^{d-1}$ sequences $\ii=(0=i_0<i_1<\dots<i_k=d)$.
\end{theorem}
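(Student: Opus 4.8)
The plan is to prove the set equality by establishing both inclusions, reducing the harder one to a single claim about Cohen--Macaulay monomial initial ideals. I first record the common ground. For every $\aa$ the ring $S/\ini_\aa(P)$ has the same Hilbert series $(1+(d-1)z)/(1-z)^2$ as $S/P$, and so does every monomial initial ideal of $\ini_\aa(P)$; hence all these rings are $2$-dimensional with a short $h$-vector, and by Lemma~\ref{CMvsCM} each is Cohen--Macaulay exactly when its Castelnuovo--Mumford regularity equals $2$. Moreover, applying the depth inequality of Section~\ref{notpre} to the one-parameter degeneration, if $\ini_\aa(P)$ admits \emph{some} Cohen--Macaulay monomial initial ideal $J$, then $2=\depth S/J\le\depth S/\ini_\aa(P)\le\dim S/\ini_\aa(P)=2$, so $\ini_\aa(P)$ itself is Cohen--Macaulay. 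For a sequence $\ii=(0=i_0<\cdots<i_k=d)$ write $\ini_\ii(P)$ for the Cohen--Macaulay monomial initial ideal of $P$ attached to $\ii$ by Theorem~\ref{riassunto}. With this in hand it suffices to prove (A) if $\aa\in\ol{C(\ii)}$ for some $\ii$ then $\ini_\aa(P)$ has a Cohen--Macaulay monomial initial ideal, and (B) if $\ini_\aa(P)$ is Cohen--Macaulay then $\aa\in\ol{C(\ii)}$ for some $\ii$.

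For (A) I would argue by perturbation. Fix $\aa\in\ol{C(\ii)}$ and choose $\cc$ in the open cone $C(\ii)$, which is nonempty since $\ini_\ii(P)$ is a monomial initial ideal of $P$. Each defining inequality of $C(\ii)$ has the form $\ell>0$ for a linear functional $\ell$ with $\ell(\cc)>0$ and $\ell(\aa)\ge 0$, so $\ell(\aa+\epsilon\cc)>0$ for all $\epsilon>0$; thus $\aa+\epsilon\cc\in C(\ii)\subseteq\QQ_{\ge 0}^{d+1}$ and $\ini_{\aa+\epsilon\cc}(P)=\ini_\ii(P)$ for every $\epsilon>0$. Since also $\ini_{\aa+\epsilon\cc}(P)=\ini_\cc(\ini_\aa(P))$ for small $\epsilon>0$, we get $\ini_\ii(P)=\ini_\cc(\ini_\aa(P))$, and refining $\cc$ by a term order $\tau$ and using $\ini_\tau(\ini_\cc(-))=\ini_{\tau\cc}(-)$ exhibits $\ini_\ii(P)$ as a monomial initial ideal of $\ini_\aa(P)$. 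It is Cohen--Macaulay by Theorem~\ref{riassunto}; this proves (A), and together with the common ground it yields $\cup_\ii\ol{C(\ii)}\subseteq\{\aa:\ini_\aa(P)\text{ is Cohen--Macaulay}\}$.

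For (B) I would first reduce, by a Gr\"obner-basis bookkeeping, to the claim $(\star)$: \emph{if $\ini_\aa(P)$ is Cohen--Macaulay then it has a Cohen--Macaulay monomial initial ideal.} Granting $(\star)$, let $J=\ini_\sigma(\ini_\aa(P))$ be such an ideal for a term order $\sigma$; then $J=\ini_{\sigma\aa}(P)$, and $J=\ini_\ii(P)$ for a unique $\ii$ by Theorem~\ref{riassunto}. Since the reduced Gr\"obner basis of an ideal with respect to a term order depends only on the resulting monomial initial ideal, the reduced Gr\"obner basis of $P$ for $\sigma\aa$ is the one displayed in Theorem~\ref{riassunto}(5); hence for each of its elements $\underline{t_st_r}-t_{i_v}t_{s+r-i_v}$ (resp.\ $\underline{t_st_r}-t_{i_{v+1}}t_{s+r-i_{v+1}}$) the underlined monomial is the $\sigma\aa$-leading term, which forces $a_s+a_r\ge a_{i_v}+a_{s+r-i_v}$ (resp.\ $a_s+a_r\ge a_{i_{v+1}}+a_{s+r-i_{v+1}}$). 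By Definition~\ref{thecone} this says exactly $\aa\in\ol{C(\ii)}$, completing (B).

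It remains to prove $(\star)$, which is the heart of the matter. If $\ini_\aa(P)$ is Cohen--Macaulay then $\reg\ini_\aa(P)=2$, so $\ini_\aa(P)$ is generated in degree $2$; since $P_2$ is spanned by the $2\times 2$ minors of $T_d$, $\ini_\aa(P)$ is generated by the forms $f_{v,r}=\ini_\aa(t_{v-1}t_r-t_vt_{r-1})$, $1\le v<r\le d$, and, writing $b_i=a_i-a_{i-1}$, $f_{v,r}$ equals the monomial $t_{v-1}t_r$ if $b_r>b_v$, the monomial $t_vt_{r-1}$ if $b_v>b_r$, and the whole binomial if $b_v=b_r$. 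The plan is to deduce, from $\reg\ini_\aa(P)=2$ (equivalently, from the fact that the $f_{v,r}$ generate an ideal with linear resolution, via an analysis in the spirit of Lemmas~\ref{sizi} and~\ref{regole}), that the order-comparisons among $b_1,\dots,b_d$ are \emph{layered}: there is a partition $\{1,\dots,d\}=B_1\sqcup\cdots\sqcup B_k$ into consecutive intervals $B_j=(i_{j-1},i_j]$ on which the sequence is non-increasing and with $b_v<b_r$ whenever $v\in B_j$, $r\in B_{j'}$ and $j<j'$; this is precisely the shape of the vectors of Remark~\ref{next}. Indeed one would show that any arrangement of the $b_i$ not admitting such a partition forces a minimal generator of $\ini_\aa(P)$ in degree $\ge 3$ or a non-linear first syzygy, contradicting $\reg=2$. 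Given the partition, put $\ii=(i_0,\dots,i_k)$; then exactly as in (A) the Cohen--Macaulay ideal $\ini_\ii(P)$ is a monomial initial ideal of $\ini_\aa(P)$, which proves $(\star)$ and hence the theorem. The main obstacle is this last step: one must control the (in general non-monomial) ideal $\ini_\aa(P)$ finely enough to see that Cohen--Macaulayness forces the comparisons of the $b_i$ into one of the $2^{d-1}$ layered patterns, and not into one of the many other patterns that keep the short $h$-vector but destroy the linear resolution; everything else is formal manipulation with the Gr\"obner fan together with the classification of the Cohen--Macaulay monomial initial ideals of $P$ already obtained in Theorem~\ref{riassunto}.
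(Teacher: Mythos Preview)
Your argument for (A), and the bookkeeping reducing (B) to $(\star)$, are correct and essentially match the paper. The genuine gap is in your proposed proof of $(\star)$: the intermediate claim --- that Cohen--Macaulayness of $\ini_\aa(P)$ forces the $b_i$ to be ``layered'' (non-increasing on consecutive blocks $B_j=(i_{j-1},i_j]$, with $b_v<b_r$ whenever $v$ lies in an earlier block than $r$) --- is \emph{false}. That layered condition describes exactly the permutation cone of Remark~\ref{next}, and the paper observes explicitly that the inclusion $C_\sigma\subseteq C(\ii)$ can be strict. Concretely, for $d=4$ take $b=(4,2,3,1)$, i.e.\ $\aa=(0,4,6,9,10)$; one checks directly that $\aa$ lies in the open cone $C(0,4)$, so $\ini_\aa(P)=(t_1,t_2,t_3)^2$ is Cohen--Macaulay (indeed monomial), yet no partition of $\{1,2,3,4\}$ into consecutive blocks makes $b$ layered in your sense. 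Thus no syzygy analysis can possibly yield the layered pattern, and your plan for $(\star)$ cannot succeed as stated; at best it would place $\aa$ in the union of the permutation cones, which is strictly smaller than $CM_d$ already for $d=4$.

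The paper sidesteps this by abandoning the direct syzygy route for (B) and instead exploiting the bridge to lex-segment ideals built in Section~\ref{contra2}. After normalizing $\aa$ to lie in $W_d$, one attaches to $\aa$ the lex-segment ideal $L$ and uses Proposition~\ref{veryimpo} together with Theorem~\ref{redunum} to translate Cohen--Macaulayness of $\ini_\aa(P)$ into the vanishing of the deviation $V(L)$. The sequence $\ii$ is then \emph{read off from the Newton polytope} of $L$: its vertices are precisely the points $(d-i_t,a_{i_t})$. A direct computation (the area formula for $e_0(L)$ combined with Lemma~\ref{formH} for $\dim_K R/L^2$) gives
\[
V(L)=\sum_{j=0}^{2d}(\beta_j-\alpha_j),
\]
where $\alpha_j=\min\{a_s+a_r:s+r=j\}$ and $\beta_j$ is the specific sum appearing on the right-hand side of the defining inequality of $\ol{C(\ii)}$ in degree $j$. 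Since $\beta_j\ge\alpha_j$ always, $V(L)=0$ forces $\beta_j=\alpha_j$ for every $j$, which is exactly the statement $\aa\in\ol{C(\ii)}$. This argument never attempts to pin down the pairwise comparisons of the $b_i$; it works because the Newton polytope selects the correct $\ii$ and the deviation formula compares $\aa$ directly against the inequalities of that single closed cone.
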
 

\begin{proof} First we prove the inclusion $\supseteq$. Let $\aa \in \ol{C(\ii)}$. To see that 
$\ini_\aa(P)$ is Cohen-Macaulay it is enough to prove that it has a Cohen-Macaulay initial ideal. Just take $\aa'$ in the open cone ${C(\ii)}$ and check that $\ini_{\aa'}(\ini_\aa(P))=\ini_{\aa'}(P)$. This is easy since it is enough to check that $\ini_{\aa'}(\ini_\aa(P))\supseteq \ini_{\aa'}(P)$. 

In order to prove the opposite inclusion, let $\aa\in \QQ_{\geq 0}^{d+1}$ be such that $\ini_\aa(P)$ is Cohen-Macaulay. We have to show that $\aa\in \ol{C(\ii)}$ for some $\ii$. We know already that if $\ini_\aa(P)$ is monomial then $\aa\in C(\ii)$ where $\ii$ is the sequence of the indices $i$ such that no power of $t_i$ belongs to $\ini_\aa(P)$. So we are left with the case $\ini_\aa(P)$ is non-monomial. To treat this case we first note that, without loss of generality, we may assume that $\aa=(a_0,\dots,a_d)$ with $a_i\in \NN$, $a_0=0$, and $a_i<a_{i+1}$. This is because cleaning denominators and adding to $\aa$ multiples of the vectors $(1,\dots, 1)$ and $(0,1,2,\dots,d)$ do not change $\ini_\aa(P)$ nor the membership in the cones. 
Then we may associate to $\aa$ a lex-segment ideal $L$ in $R=K[x,y]$ as described in Section \ref{contra2}. We compute the deviation $V(L)$ of $L$ in terms of the $a_i$'s. It is well-know, see for instance \cite{DTVVW}, that the multiplicity $e_0(L)$ of $L$ is twice the area of the region $\RR_+^2\setminus \hull(L)$ where $\hull(L)$ is the Newton polytope of $L$, i.e. the convex hull of the set of elements $(a,b)\in \NN^2$ such that $x^ay^b\in L$. To determine $e_0(L)$ we describe the vertices of $\hull(L)$. The generators of $L$ are the elements $x^{d-i}y^{a_i}$. Set $i_0=0$ and assume that $i_t<d$ is already defined. Then we set 
$$m=\min\{ (a_j-a_{i_t})/(j-i_t) : j=i_t+1,\dots,d\}$$
 and 
$$i_{t+1}=\max\{j : i_t+1<j\leq d \mbox{ and } (a_j-a_{i_t})/(j-i_t)=m \}.$$
The procedure stops when we have reached, say after $k$ steps, $i_k=d$. By construction 
the points $(d-c,a_{c})$ with $c\in \{i_0,i_1,\dots, i_k\}$ are the vertices of $\hull(L)$. Taking into account that twice the area of the triangle with vertices $(0,0), (d-i_t,a_{i_t}), (d-i_{t+1}, a_{i_{t+1}})$ is 
$$a_{i_{t+1}}(d-i_t)-a_{i_{t}}(d-i_{t+1})$$
and that $a_0=0$, $i_0=0$ we obtain

$$e_0(L)=a_0(i_1-i_0)+\sum_{t=1}^{k-1} a_{i_t}(i_{t+1}-i_{t-1})+a_{i_k}(i_k-i_{k-1}).$$

For $j=0,\dots,2d$ set $\alpha_j=\min\{ a_s+a_r : s+r=j\}$ and 

$$\beta_j=\left\{\begin{array}{ll}
a_{i_t}+a_{j-i_t} & \mbox{ if } 2 i_t\leq j \leq i_t+i_{t+1}\\
a_{i_{t+1}}+a_{j-i_{t+1}} & \mbox{ if } i_t+i_{t+1}\leq j \leq 2i_{t+1} 
\end{array}
\right.
.$$
 
Since $\dim_K (R/L)=\sum_{i=0}^d a_i$ and $\dim_K (R/L^2)=\sum_{i=0}^{2d} \alpha_i$ we have
$$\begin{array}{l}
V(L)= e_0(L)-\sum_{i=0}^{2d} \alpha_i+2\sum_{i=0}^d a_i=\\ \\ 
e_0(L)-\sum_{i=0}^{2d} \beta_i+2\sum_{i=0}^d a_i+\sum_{i=0}^{2d}(\beta_i-\alpha_i)= \\ \\ 
Z+\sum_{i=0}^{2d}(\beta_i-\alpha_i)
\end{array}$$
 where 
 $$Z=a_0(i_1-i_0)+\sum_{t=1}^{k-1} a_{i_t}(i_{t+1}-i_{t-1})+a_{i_k}(i_k-i_{k-1})
 -\sum_{i=0}^{2d} \beta_i+2\sum_{i=0}^d a_i.$$

We claim that $Z=0$ identically as a linear form in the $a_i$'s. This can be checked by direct inspection. That $Z=0$ follows also from the fact that $Z$, as a linear function in the $a_i$'s, computes the deviation $V(H)$ where $H$ is any lex-segment with associated vector $\aa$ in the cone $C(\ii)$. 
Since every such lex-segment ideal has a Cohen-Macaulay associated graded ring, we have $V(H)=0$. Therefore $Z$ vanishes when evaluated at the points of $C(\ii)\cap \{\aa\in \NN^{d+1} : 0=a_0<a_1<\dots <a_d\}$ and so it is $0$ identically. 
Summing up, 
$$V(L)=\sum_{i=0}^{2d}(\beta_i-\alpha_i).$$
Now, by assumption $\ini_\aa(P)$ is Cohen-Macaulay, thus by Proposition \ref{veryimpo} $\gr_L(R)$ is Cohen-Macaulay and by Theorem \ref{redunum} $V(L)=0$. Since $\beta_i\geq \alpha_i$ for every $i$, it follows that $\beta_i=\alpha_i$ for every $i$ which in turns implies that $\aa\in \ol{C(\ii)}$. 
\end{proof} 

\begin{remark} Let $\aa=(a_0,\dots,a_d)$ be the vector associated to a lex-segment ideal $L$. Denote by $Y$ the convex hull of $\{ (i,j) : x^iy^j\in L\}$, by $V$ the set of the vertices of $Y$ and by $V'$ the set of the elements $(d-i,a_i)$ belonging to the lower boundary of $Y$. Clearly $V\subseteq V'$. Assume that $\gr_L(R)$ is Cohen-Macaulay. The proof above shows that $\aa\in \ol{C(\ii)}$ where $\{(d-j,a_j) : j\in \ii\}=V$. The same argument shows also that $\aa\in \ol{C(\ii)}$ for every $\ii$ such that $V\subseteq \{(d-j,a_j) : j\in \ii \}\subseteq V'$. 
In particular, $\aa$ belongs to $2^u$ of the cones $\ol{C(\ii)}$ where $u=\# V'-\# V$. 
\end{remark} 

The next example illustrates the remark above. 

\begin{example} Let $\aa=(0,2,4,a_3,a_4,a_5)$ with 
$$
\begin{array}{l}
4<a_3<a_4<a_5, \qquad a_3>4+(a_5-4)/3, \qquad a_4>4+2(a_5-4)/3.
\end{array}
$$
Then $V=\{(5,0), (3,4), (0,a_5)\}$ and $V'=V\cup \{(4,2)\}$. By the remark above we have that if $\ini_\aa(P)$ is Cohen-Macaulay then $\aa\in \ol {C(\ii)}\cap \ol{C(\jj)}$ with $\ii=(0,1,2,5)$ and $\jj=(0,2,5)$. In this case the Cohen-Macaulay property is equivalent to the inequalities 
$$
\begin{array}{l}
a_4\geq a_3+2, \quad a_5\geq a_4+2,\quad 2a_3\geq a_4+4,\quad 2a_4\geq a_3+a_5.
\end{array}
$$
\end{example} 

\begin{definition}
Let $\sigma\in S_d$ be a permutation. We may associate to $\sigma$ a cone 
$$C_\sigma=\{ \aa \in \QQ_{\geq 0}^{d+1} : b_{\sigma^{-1}(1)}<\dots< b_{\sigma^{-1}(d)}\}$$
where $b_i=a_i-a_{i-1}$. We call $C_\sigma$ the permutation cone associated to $\sigma$. 
\end{definition}

\begin{remark}
 As shown in the proof of Theorem \ref{moninidCM} and Remark \ref{next} each cone $C(\ii)$ contains a specific permutation cone $C_\sigma$. The permutations involved in the construction are indeed permutations avoiding the patterns ``231" and ``312". More precisely, there is a bijective correspondence between the permutations $\sigma \in S_d $ avoiding the patterns ``231" and ``312" and the cones $C(\ii)$ so that $C(\ii)\supseteq C_\sigma$. However, as we will see, the inclusion $C_\sigma\subseteq C(\ii)$ can be strict in general. The study of permutation patterns is an important subject in combinatorics, see for instance \cite{W}. 
 \end{remark}
 
The following example illustrates Theorem \ref{riassunto}. 

\begin{example} Suppose $d=6$ and take the sequence $\ii=(i_0=0,\ i_1=3,\ i_2=4,\ i_3=6)$. 
The corresponding Cohen-Macaulay initial ideal $I$ of $P$ is obtained by dividing the matrix $T_6$ in blocks (from column
$i_v+1$ to $i_{v+1}$)

$$T_6=\left( \begin{array}{cccccccc}
t_0 & t_1 & t_2 &|& t_3 &|& t_4 & t_5 \\
t_1 & t_2 & t_3 &|& t_4 &|& t_5 & t_6 
\end{array} 
\right) 
$$
and then taking anti-diagonals of minors whose columns belong to the same block, 
$$t_1^2,t_1t_2,t_2^2,t_5^2,$$ 
and main diagonals from minors whose columns belong to different blocks
$$t_0t_4, t_0t_5, t_0t_6, t_1t_4, t_1t_5, t_1t_6, t_2t_4, t_2t_5, t_2t_6, t_3t_5, t_3t_6.$$ 
The ideal $I$ is the initial ideal of $P$ with respect to every term order refining the weight
$\aa=(0,3,5,6,10,16,21)$ obtained from the ``permutation" vector $\sigma=(3,2,1|4|6,5)\in S_6$ by setting $a_0=0$ and
$a_i=\sum_{j=1}^i \sigma_j$. With respect to this term order the $2$-minors of $T_6$ are a Gr\"obner basis of $P$ but not the reduced Gr\"obner basis. The corresponding reduced Gr\"obner basis is 
$$\begin{array}{lllllll}
\underline{t_1^2}-t_0t_2, & 
\underline{t_1t_2}-t_0t_3, & 
\underline{t_2^2}-t_1t_3, & 
\underline{t_0t_4}-t_1t_3 & 
\underline{t_0t_5}-t_2t_3, \\ \\ 
\underline{t_1t_4}-t_2t_3, &
\underline{t_0t_6}-t_3^2, &
\underline{t_1t_5}-t_3^2, & 
\underline{t_2t_4}-t_3^2, & 
\underline{t_1t_6}-t_3t_4, \\ \\ 
\underline{t_2t_5}-t_3t_4, & 
\underline{t_2t_6}-t_4^2, & 
\underline{t_3t_5}-t_4^2, & 
\underline{t_3t_6}-t_4t_5, & 
\underline{t_5^2}-t_4t_6.
\end{array} 
$$
So for every vector $\aa=(a_0,a_1,\dots,a_6)\in \QQ_{\geq 0}^{7}$ satisfying the following system of linear inequalities
$$\begin{array}{lllll}
2a_1>a_0+a_2*& 
a_1+a_2>a_0+a_3 & 
2a_2>a_1+a_3* & 
a_0+a_4>a_1+a_3* \\
a_0+a_5>a_2+a_3 & 
a_1+a_4>a_2+a_3 &
a_0+a_6>2a_3 &
a_1+a_5>2a_3 \\
a_2+a_4>2a_3 & 
a_1+a_6>a_3+a_4 &
a_2+a_5>a_3+a_4 & 
a_2+a_6>2a_4 \\
a_3+a_5>2a_4 &
a_3+a_6>a_4+a_5* & 
2a_5>a_4+a_6 *
\end{array} 
$$
we have $\ini_\aa(P)=I$. 
The $15$ linear homogeneous inequalities above define the open Cohen-Macaulay cone $C(\ii)$. The description is however far from being minimal. The inequalities marked with the star $*$ are indeed sufficient to describe $C(\ii)$. In terms of $b_i=a_i-a_{i-1}$ the inequalities can be described by 
$b_3<b_2<b_1<b_4<b_6<b_5,$ that is, $C(\ii)=C_{\sigma}$. 
\end{example} 
 
\begin{remark} \begin{itemize} 
\item[(1)] There exist Cohen-Macaulay ideals of dimension $2$ with minimal multiplicity and without Cohen-Macaulay initial monomial ideals in the given coordinates. For instance, let $J$ be the ideal of $K[t_0,\dots,t_4]$ generated by the $2$-minors of the matrix 
$$\left( \begin{array}{ccccc}
t_0& t_2& t_4-t_0& 0 \\
t_1& t_3& 0 & t_4+t_0
\end{array}\right).
$$
Then $J$ has the expected codimension and hence it is $2$-dimensional Cohen-Macaulay with minimal multiplicity. No monomial initial ideal of $J$ is quadratic since the degree $2$ part of every monomial initial ideal has codimension $2$. Hence no monomial initial ideal of $J$ is
Cohen-Macaulay. This example shows that Theorem \ref{nonmon} does not hold for $2$-dimensional binomial Cohen-Macaulay ideals with minimal multiplicity. 

\item[(2)] $\ini_\aa(P)$ can be monomial, quadratic and
non-Cohen-Macaulay. For example, for $d=4$ the ideal $I=(t_3^2, t_2t_3, t_1t_3, t_0t_4, t_0t_3, t_1^2)$ is a non-Cohen-Macaulay (indeed non-pure) monomial initial ideal of $P$. The corresponding cone is described in terms of $b_i=a_i-a_{i-1}$ by the inequalities 
$b_3>b_1>b_2>b_4$ and $b_3+b_4>b_1+b_2$. 

\item[(3)] $\ini_\aa(P)$ can be quadratic with linear $1$-syzygies and not Cohen-Macaulay. 
For instance, with $d=7$ the ideal generated by 
$$\begin{array}{l}
t_2^2,\ t_4^2,\ t_6^2,\ t_1t_2,\ t_0t_4,\ t_0t_5,\ t_1t_4,\ t_0t_6,\ t_1t_5,\ t_2t_4,\ t_0t_7,\ t_1t_6,\ 
t_3t_4,\ \\ t_1t_7,\ t_2t_6,\ t_3t_6,\ t_3t_7,\ t_4t_6,\ t_1^2+t_0t_2,\ -t_4t_5+t_2t_7,\ -t_5t_6+t_4t_7
\end{array} $$
 is an initial ideal of $P$ with linear $1$-syzygies and a non-linear $2$-syzygy. 
 
 \item[(4)] We do not know an example as the one in (3) if we further assume that $\ini_\aa(P)$ is a monomial ideal. 
Note however that $2$-dimensional non-Cohen-Macaulay quadratic monomial ideals with a short h-vector and linear $1$-syzygies exist, 
for example $(t_1t_3, t_1t_5, t_0t_2, t_2t_5, t_0t_3, t_2^2, t_2t_4, t_2t_3, t_0t_4, t_4t_5)$. 
\end{itemize} 
\end{remark} 

\begin{remark}\label{Rekha} As Rekha Thomas pointed out to us, 
one can deduce from results in \cite{HT,OT} that $P$ has exactly one Cohen-Macaulay monomial initial ideal for each regular triangulation of the underlying point configuration $A$. In \cite[Theorem 5.5(ii)]{HT} it is proved that for every regular triangulation of $A$ there exists exactly one initial ideal having no embedded primes (they are called Gomory initial ideals in the paper). In \cite{OT} it is proved that every Gomory initial ideal coming from a $\Delta$-normal configuration is Cohen-Macaulay. Since every triangulation of $A$ is $\Delta$-normal, one can conclude that the Gomory ideals of $P$ are indeed Cohen-Macaulay. Hence these results imply that $P$ has exactly $2^{d-1}$ Cohen-Macaulay monomial initial ideals. 
\end{remark}

\section{Contracted ideals whose associated graded ring is Cohen-Macaulay}\label{contraCM}

In this section we use the results of Section \ref{CM for P} to solve the problem (2) mentioned in the introduction. 
Since the ideal $P$ is homogeneous with respect to the vectors $(1,1,\dots,1)$ and $(0,1,2,\dots,d)$ of $\QQ^{d+1}$ each cone of the Gr\"obner fan of $P$ is determined by its intersection with 
$$W_d=\{(a_0,a_1,\dots, a_d)\in \NN^{d+1} : 0=a_0<a_1<\dots<a_d\}.$$
As explained in Section \ref{contra2}, $W_d$ parametrizes the lex-segment ideals of initial degree $d$. 
For a given $d\in \NN, d>0$ we set 
$$CM_d=\{ \aa\in \QQ_{\geq 0}^{d+1} : \ini_{\aa}(P) \mbox{ is Cohen-Macaulay}\}$$ 
the ``Cohen-Macaulay region" of the Gr\"obner fan of $P$. According to 
\ref{nonmon} we have 
$$CM_d=\cup_{\ii} \ol{C(\ii)}$$
where the union is extended to all the $2^{d-1}$ sequences 
$$\ii=(0=i_0<i_1<\dots<i_k=d).$$

\begin{theorem} 
\label{togo}
Let $d_1,\dots,d_s$ be positive integers and $\aa_1,\dots, \aa_s$ be vectors such that $\aa_i\in W_{d_i}$. Let $\ell_1,\dots,\ell_s,z$ be linear forms in $R=K[x,y]$ such that each pair of them is linearly independent. For every $i=1,\dots,s$ consider the lex-segment ideals $L_i$ associated to $\aa_i$ with respect to $\ell_i,z$, that is, 
$L_i=(\ell_i^{d_i-j}z^{a_{ij}} : j=0,\dots, d_i)$. Set $I=L_1\cdots L_s$. We have: 

\begin{itemize} 
\item[(1)] $I$ is contracted and every homogeneous contracted ideal in $R=K[x,y]$ arises in this way. 
\item[(2)] $\gr_I(R)$ is Cohen-Macaulay iff $\aa_i\in CM_{d_i}$ for all $i=1,\dots,s$. 
\end{itemize}
\end{theorem}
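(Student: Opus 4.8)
The plan is to prove the two assertions separately, leaning heavily on Zariski's factorization theory (Theorem~\ref{ZaGr}) and on the translation between lex-segment ideals and initial ideals of $P$ (Proposition~\ref{veryimpo} together with the characterization in Theorem~\ref{nonmon}).

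For part (1), first I would recall that each $L_i$ is a homogeneous $\mm$-primary ideal whose characteristic form is a power of $\ell_i$: indeed a lex-segment ideal associated to $\aa_i\in W_{d_i}$ with respect to the coordinates $\ell_i,z$ has $\GCD$ of its initial-degree part equal to $\ell_i^{\,d_i - j_i}$ for an appropriate exponent, and in particular its characteristic form is a power of $\ell_i$. Since $\ell_1,\dots,\ell_s$ are pairwise linearly independent, Theorem~\ref{ZaGr}(1) (more precisely the existence and shape of Zariski's factorization, see \cite[App.5]{ZS}, \cite[Chap.14]{HS}, \cite{CDJR}) tells us that a product of such ideals is contracted and, conversely, that every homogeneous contracted ideal admits such a factorization $I = L_1\cdots L_s$ with the $L_i$ lex-segment in suitable coordinate systems (Remark~\ref{sonolex}). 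So $I=L_1\cdots L_s$ is contracted, and every contracted ideal arises this way; I would just need to note that the coordinate change bringing each $L_i$ into lex-segment form can be taken of the form $(\ell_i, z)$ for a common second form $z$, which is exactly the content of Zariski's theorem in the homogeneous setting over an algebraically closed field.

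For part (2), the chain of equivalences is: $\gr_I(R)$ is Cohen-Macaulay $\iff$ $\gr_{L_i}(R)$ is Cohen-Macaulay for every $i$ $\iff$ $\ini_{\aa_i}(P)$ is Cohen-Macaulay for every $i$ $\iff$ $\aa_i\in CM_{d_i}$ for every $i$. The first equivalence is Theorem~\ref{ZaGr}(2), which gives $\depth\gr_I(R)=\min_i \depth\gr_{L_i}(R)$, so the big ring is Cohen-Macaulay (equivalently has depth $2$, since $\dim\gr_I(R)=2$) precisely when each $\gr_{L_i}(R)$ has depth $2$. The one subtlety here is that $\depth\gr_{L_i}(R)\le 2$ always and $\gr_{L_i}(R)$ is Cohen-Macaulay iff this depth equals $2$; since all the $\gr_{L_i}(R)$ and $\gr_I(R)$ are $2$-dimensional, ``minimal depth $=2$'' is the same as ``all Cohen-Macaulay''. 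The second equivalence is Proposition~\ref{veryimpo}, applied to each lex-segment ideal $L_i$ with its $a$-sequence $\aa_i$: $\depth\gr_{L_i}(R)=\depth K[t_0,\dots,t_{d_i}]/\ini_{\aa_i}(P)$, so $\gr_{L_i}(R)$ is Cohen-Macaulay iff $\ini_{\aa_i}(P)$ is. The third equivalence is just the definition of $CM_{d_i}$.

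I do not expect a genuine obstacle here: the theorem is really a repackaging of results already established in the excerpt. The one point requiring a little care is making sure the hypotheses line up—that the coordinate pair $(\ell_i,z)$ used to build $L_i$ is compatible with the coordinate system in which Zariski's factorization exhibits the factors as lex-segment ideals, and that the pairwise linear independence of $\ell_1,\dots,\ell_s,z$ is exactly what is needed for the product to remain contracted (rather than collapsing to a product with fewer distinct characteristic forms). I would phrase part (1) so that this compatibility is transparent, citing \cite[App.5]{ZS} or \cite[Chap.14]{HS} and \cite{CDJR} for the precise form of the factorization theorem, and then part (2) follows formally by stringing together Theorem~\ref{ZaGr}(2), Proposition~\ref{veryimpo}, and Theorem~\ref{nonmon}.
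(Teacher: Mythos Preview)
Your proposal is correct and matches the paper's own proof essentially line for line: part~(1) is Zariski's factorization theorem (Theorem~\ref{ZaGr}(1) together with Remark~\ref{sonolex}), and part~(2) is obtained by chaining Theorem~\ref{ZaGr}(2), Proposition~\ref{veryimpo}, and the definition of $CM_{d_i}$ (the paper also cites Theorem~\ref{nonmon}, as you do at the end). The extra care you take about coordinate compatibility is fine but not strictly needed beyond what the cited references already provide.
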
 
\begin{proof} 
(1) is a restatement of Zariski's factorization theorem for contracted ideals. Statement (2) follows from Theorem \ref{ZaGr}, Proposition \ref{veryimpo} and Theorem \ref{nonmon}. 
\end{proof}

Theorem \ref{togo} can be generalized as follows. 
 
\begin{theorem} 
\label{ritogo}
 Let $I\subset K[x,y]$ be a monomial ideal (not necessarily contracted) and let $\aa=(a_0,\dots,a_d)$ be its associated sequence. 
Then $\gr_I(R)$ is Cohen-Macaulay if and only if $\aa\in CM_d$. 
\end{theorem}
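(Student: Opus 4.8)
The plan is to run the deviation computation from the proof of Theorem~\ref{nonmon}, but to apply Theorem~\ref{redunum} \emph{directly} to $I$ (it holds for any $\mm$-primary homogeneous ideal of $R$, contracted or not), using that the formulas recalled in Section~\ref{contra2} for $\dim_K(R/I)$, $\dim_K(R/I^2)$ and $e_0(I)$ in terms of $\aa$ are valid for an arbitrary monomial ideal. First note that $I$ is $\mm$-primary (it contains $x^d$ and $y^{a_d}$), so by Theorem~\ref{redunum} one has $\gr_I(R)$ Cohen--Macaulay iff $V(I)=0$, where $V(I)=e_0(I)-\dim_K(R/I^2)+2\dim_K(R/I)\ge 0$; and $\dim_K(R/I)=\sum_{i=0}^d a_i$, $\dim_K(R/I^2)=\sum_{j=0}^{2d}\alpha_j$ with $\alpha_j:=\min\{a_s+a_r:s+r=j,\ 0\le s,r\le d\}$, while $e_0(I)$ is twice the area of $\RR^2_{\ge0}\setminus\hull(I)$.

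For a sequence $\ii=(0=i_0<\dots<i_k=d)$ put $E(\ii):=\sum_{t=0}^{k-1}\big((d-i_t)a_{i_{t+1}}-(d-i_{t+1})a_{i_t}\big)$ and let $\beta_j^{(\ii)}$ ($0\le j\le 2d$) be as in the proof of Theorem~\ref{nonmon}. Two ingredients are at hand. (a) The linear form $E(\ii)-\sum_j\beta_j^{(\ii)}+2\sum_i a_i$ vanishes identically: this is the identity ``$Z=0$'' established there, whose justification (evaluation on the full-dimensional open cone $C(\ii)$, where the form computes the deviation of a lex-segment ideal with Cohen--Macaulay associated graded ring, hence equals $0$) applies to every such $\ii$. (b) $E(\ii)\ge e_0(I)$: indeed $E(\ii)$ is twice the area of the region of $\RR^2_{\ge0}$ lying below the monotone polygonal path through the points $(d-i_t,a_{i_t})$, and since $x^{d-i_t}y^{a_{i_t}}\in I$ each such point lies in $\hull(I)$, so this region contains $\RR^2_{\ge0}\setminus\hull(I)$, with equality precisely when that path is the lower boundary of $\hull(I)$. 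Plugging (a) into the formula for $V(I)$ yields, for every $\ii$,
$$V(I)=\big(e_0(I)-E(\ii)\big)+\sum_{j=0}^{2d}\big(\beta_j^{(\ii)}-\alpha_j\big).$$

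Now I would take $\ii_0$ to be the sequence of indices $i$ for which $(d-i,a_i)$ lies on the lower boundary of $\hull(I)$ (this includes $0$ and $d$); then (b) is an equality, so $V(I)=\sum_j(\beta_j^{(\ii_0)}-\alpha_j)\ge 0$ term by term, each $\beta_j^{(\ii_0)}$ having the shape $a_s+a_r$ with $s+r=j$. Hence $V(I)=0$ iff $\alpha_j=\beta_j^{(\ii_0)}$ for every $j$, and by Definition~\ref{thecone} this is exactly $\aa\in\overline{C(\ii_0)}$; since $\overline{C(\ii_0)}\subseteq CM_d$, this gives $\gr_I(R)$ Cohen--Macaulay $\Rightarrow\aa\in CM_d$. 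Conversely, if $\aa\in CM_d$ then $\aa\in\overline{C(\ii)}$ for some $\ii$, which by Definition~\ref{thecone} forces $\alpha_j=\beta_j^{(\ii)}$ for all $j$ (the listed inequalities give $\alpha_j\ge\beta_j^{(\ii)}$, while $\alpha_j\le\beta_j^{(\ii)}$ always); the displayed identity then collapses to $V(I)=e_0(I)-E(\ii)\le 0$ by (b), so $V(I)=0$ and $\gr_I(R)$ is Cohen--Macaulay.

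The step I expect to be the real obstacle is the converse: one needs $\aa\in\overline{C(\ii)}$ to force $V(I)=0$ \emph{for every} admissible $\ii$, not only for the Newton sequence $\ii_0$, and it is precisely the combination of the universal identity (a) with the area estimate (b) that supplies this. A reason to route through $V(I)$ at all is that the more direct hope — that Proposition~\ref{veryimpo} extends verbatim — fails: the identification $\Rees(I)/(y)\Rees(I)=K[x,t_0,\dots,t_d]/\big(x(t_1,\dots,t_d)+\ini_\aa(P)\big)$ used in its proof breaks down as soon as some $a_{i-1}=a_i$.
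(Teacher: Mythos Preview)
Your argument is correct and takes a genuinely different route from the paper's. The paper reduces the general monomial case to the lex-segment case: it replaces $\aa$ by $\aa'=\aa+(0,1,\dots,d)$, observes that the associated ideal $L$ is a lex-segment, invokes the external result \cite[Thm.~3.12]{CDJR} to get $\depth\gr_I(R)=\depth\gr_L(R)$, and uses the homogeneity of the cones $C(\ii)$ with respect to $(0,1,\dots,d)$ to see that $\aa\in CM_d$ iff $\aa'\in CM_d$; Theorem~\ref{togo} then finishes. You instead run the deviation computation from the proof of Theorem~\ref{nonmon} directly for $I$: the formulas for $\dim_K(R/I)$, $\dim_K(R/I^2)$ and $e_0(I)$ in Section~\ref{contra2} are valid for arbitrary monomial ideals, and combining the universal identity $Z=0$ with the area inequality $E(\ii)\ge e_0(I)$ (which follows from convexity of $\hull(I)$ since all the points $(d-i_t,a_{i_t})$ lie in it) yields $V(I)=(e_0(I)-E(\ii))+\sum_j(\beta_j^{(\ii)}-\alpha_j)$ for every $\ii$, and the argument closes. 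Your approach is more self-contained, avoiding both the quadratic transform and the reference to \cite{CDJR}; the paper's is shorter and exhibits the structural reason (invariance under the shift by $(0,1,\dots,d)$) behind the result. Your closing observation that Proposition~\ref{veryimpo} does not extend verbatim when some $b_i=0$ is exactly the obstruction the paper's detour through the quadratic transform is designed to bypass.
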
 
\begin{proof} 
If $\aa$ is strictly increasing, then $I$ is a lex-segment ideal. Hence $I$ is contracted and the statement is a special case of Theorem \ref{togo}. 
If $\aa$ is not strictly increasing, then we set $\aa'=\aa+(0,1,\dots,d)$ and let $L$ be the monomial ideal associated to $\aa'$. Since $\aa'$ is strictly increasing, $L$ is a lex-segment ideal. 
The cones $C(\ii)$ are described by inequalities that are homogeneous with respect to $(0,1,\dots,d)$. Therefore $\aa$ belongs to $CM_d$ if and only if $\aa'$ does it. 
By construction, $I$ is the quadratic transform of the contracted ideal $L$ in the sense of \cite[Sect.3]{CDJR}. Further we know that $\depth \gr_I(R)=\depth \gr_L(R)$, \cite[Thm.3.12]{CDJR}. Summing up, we have: $\gr_I(R)$ is Cohen-Macaulay iff $\gr_L(R)$ is Cohen-Macaulay iff $\aa'\in CM_d$ iff $\aa\in CM_d$. 
\end{proof} 

\begin{remark} 
\begin{itemize} 
\item[(1)] In $K[x,y]$ denote by $C$ the class of contracted ideals, by $C_1$ the class of the ideals in $C$ with Cohen-Macaulay associated graded ring and by $C_2$ the class of integrally closed ideals. We have $C\supset C_1\supset C_2$. 
One knows that $C$ and $C_2$ are closed under product. On the other hand $C_1$ is not: the lex-segment ideals associated to the sequences $(0, 4, 6, 7)$ and $(0, 2)$ belong to $C_1$ and their product does not. 
However, $C_1$ is closed under powers: if $I\in C_1$ then $I^k\in C_1$. This can be seen, for instance, by looking at the Hilbert function of $I$. Furthermore we will show in Section \ref{bigcone} that a certain subset of $C_1$ is closed under product. 
\item[(2)] For a lex-segment ideal $L$ in $K[x,y]$ we have seen that $\gr_L(R)$ and $F(L)$ have the same depth. We believe that $\gr_I(R)$ and $F(I)$ have the same depth for every contracted ideal $I$. In \cite[Thm.3.7, Cor.3.8]{DRV} D'Cruz, Raghavan and Verma proved that the Cohen-Macaulayness of $\gr_I(R)$ is equivalent to that of $F(I)$. 
Note however that for a monomial ideal $I$ the rings $\gr_I(R)$ and $F(I)$ might have different depth. For instance, for the ideal $I$ associated to $(0,2,2,3)$ one has $\depth \gr_I(R)=1$ and $\depth F(I)=2$. 
\end{itemize} 
\end{remark}

\begin{remark} Two of the cones of the Cohen-Macaulay region $CM_d$ are special as they correspond to opposite extreme selections: 
\begin{itemize}
\item[(1)] If $\ii=(0,1,2,\dots,d)$, then the closed cone $\ol{C(\ii)}$ is described by the inequality system 
$$a_i+a_j\geq a_u+a_v $$ 
with $u=\lfloor (i+j)/2 \rfloor , \ v=\lceil (i+j)/2$ for every $i,j$
 or, equivalently, by
$$b_{i+1}\geq b_i$$
 for every $i=1,\dots,d-1$. 
In other words, $C(\ii)$ equals its permutation cone $C_{\id}$, where $\id\in S_d$ is the identity permutation. 
 In this case the initial ideal of $P$ is $(t_it_j : j-i>1)$ and it can be realized by the lex-order with $t_0<t_1<\dots<t_d$ or by the lex-order with $t_0>t_1>\dots>t_d$. This is the only radical monomial initial ideal of $P$. The points in $W_d\cap \ol{C(\ii)}$ correspond to integrally closed lex-segment ideals. Indeed, they are the products of $d$ complete intersections of type $(x,y^u)$. 
\item[(2)] 
If $\ii=(0,d)$ then the closed cone $\ol{C(\ii)}$ is described by inequality system 
$$a_i+a_j\geq a_0+a_{i+j}$$
 if $i+j\leq d$ and 
 $$a_i+a_j\geq a_d+a_{i+j-d}$$
 if $i+j\geq d$. It can be realized by the revlex order with $t_0<t_1<\dots<t_d$ or by the
revlex order with $t_0>t_1>\dots>t_d$. The corresponding initial ideal of $P$ is $(t_1,\dots,t_{d-1})^2$. 
The lex-segment ideals $L$ belonging to the cone are characterized by the fact that 
$L^2=(x^d,y^{a_d})L$, that is, they are exactly the lex-segment ideals with a monomial minimal reduction and reduction number $1$. It is not difficult to show that the simple homogeneous integrally closed ideals of $K[x,y]$ are exactly the ideals of the form 
$\ol{(x^d,y^c)}$ with $\GCD(d,c)=1$. In other words, $\ol{C(\ii)}$ contains (the exponent vectors of) all the simple integrally closed ideals of order $d$. 
The associated permutation cone is $C_\sigma$ with $\sigma=(d,d-1,\dots,1)$. 
For $d\leq 3$ one has $C(\ii)=C_\sigma$. For $d=4$ one has $C(\ii)\supsetneq C_\sigma$ and 
$\ol{C(\ii)}=\ol{C_\sigma} \cup \ol{C_\tau}$ with $\tau=(4,2,3,1)$. For $d>4$ the cone $\ol{C(\ii)}$ is not the union of the 
closure of the permutation cones it contains. For $d=5$, for example, the cone $\ol{C(\ii)}$ is described by the inequalities
$$b_1\geq b_i \geq b_5 \mbox{ with } i=2,3,4, \quad b_1+b_2\geq b_3+b_4, \quad b_2+b_3\geq b_4+b_5$$ 
and hence it intersects but it does not contain the cone associated with the permutation $(5,2,4,3,1)$. 
\item[(3)] Apart from the example discussed in (1), the other Cohen-Macaulay monomial initial ideals of $P$ arising from lex orders are exactly those associated to sequences $\ii=(0,1,\dots,\widehat{j},\dots,d)$ for some $0<j<d$. Apart from the example discussed in (2), the other Cohen-Macaulay monomial initial ideals of $P$ arising from revlex orders are exactly those associated to sequences $\ii=(0,j,d)$ for some $0<j<d$. Therefore, starting from $d=5$, there are Cohen-Macaulay monomial initial ideals of $P$ not coming from lex or revlex orders. For instance, the initial ideal associated to $\ii=(0,1,4,5)$ is such an example. 
\end{itemize} 
\end{remark}

 \section{Describing the Hilbert series of $\gr_L(R)$}\label{HF}
Let $L$ be a lex-segment ideal in $R=K[x,y]$ with associated $a$-sequence $\aa=(a_0,a_1,\dots,a_d)$. We have seen in the proof of Theorem \ref{nonmon} that the multiplicity $e_0(L)$ can be expressed as a linear function in the $a_i$'s. In terms of initial ideals of $P$, that assertion can be rephrased as follows: let $I$ be a monomial initial ideal of $P$ and let $C_I$ be the corresponding closed cone in the Gr\"obner fan of $P$ 
 $$C_I=\{ \aa\in \QQ_{\geq 0}^{d+1} : \ini_\tau(\ini_\aa(P))=I \}$$
 where $\tau$ is a given term order such that $\ini_\tau(P)=I$. 
Let $\ii=(0=i_0<i_1<\dots<i_k=d)$ the set of the integers $0\leq j\leq d$ such that $t_j\not\in \sqrt I$. Then $\sqrt I=(t_j : j\not\in \ii)+(t_{i_v}t_{i_r} : r-v>1)$. Consider the following linear form in $\ZZ[A_0,\dots,A_d]$ 
$$e^I_0=A_0 (i_1-i_0)+\sum_{t=1}^{k-1} A_{i_t} (i_{t+1}-i_{t-1}) + A_{i_k}(i_k-i_{k-1}) $$
where the $A_i$ are variables. 
 For every lex-segment ideal $L$ with associated sequence $\aa$ belonging to $C_I$ one has that $e_0(L)$ is equal to $e^I_0$ evaluated at $A=\aa$. 
 
So the ``same" formula holds in all the cones of the Gr\"obner fan associated with the same radical, i.e. in all the cones whose union form a maximal cone of the secondary fan \cite[pag.71]{St}. We establish now similar formulas for the Hilbert function $H^1(L,k)$ and the h-polynomial of $\gr_L(R)$.

To this end, consider $S=K[t_0, t_1,\dots , t_d]$ equipped with its natural $\ZZ^{d+1}$-graded structure. The quotient $S/I$ is $\ZZ^{d+1}$-graded and we denote by 
$H_{S/I}(\underline{t})$ its $\ZZ^{d+1}$ graded Hilbert series, namely
$$H_{S/I}(\underline{t})=\sum_{\alpha\in \NN^{d+1} } \dim [S/I]_\alpha t^\alpha=\sum_{t^\alpha\not\in I} t^\alpha$$
where $t^\alpha=t_0^{\alpha_0}\cdots t_d^{\alpha_d}$. 
The key observation is contained in the following lemma. 

\begin{lemma}\label{ancoratu}
Let $L$ be a lex-segment ideal with associated vector $\aa$ belonging to $C_I$. For $k\in \NN$ set $M_k(I)=\{ \alpha\in \NN^{d+1} : t^\alpha\not\in I, |\alpha|=k \}$. Denote by $\sum M_k(I)$ the sum of the vectors in $M_k(I)$. By construction $\sum M_k(I) \in \NN^{d+1}$ and 
$$H^1(L,k-1)=\aa \cdot \sum M_k(I)$$
for all $k$. 
\end{lemma}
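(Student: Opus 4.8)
The plan is to read both sides of the asserted identity off the fiber cone $F(L)$ and the minimal generators of the powers $L^k$, using the presentation $F(L)=K[t_0,\dots,t_d]/\ini_\aa(P)$ obtained earlier together with the fact that passing from $\ini_\aa(P)$ to its $\tau$-initial ideal $I$ does not change the set of standard monomials. Throughout, $\pi\colon K[t_0,\dots,t_d]\twoheadrightarrow F(L)=\bigoplus_k L^k/\mm L^k$ denotes the graded surjection induced by $\psi$ (which kills $x,y$), so $\pi(t_i)$ is the class of the minimal generator $x^{d-i}y^{a_i}$ of $L$, and hence for $|\alpha|=k$ one has that $\pi(t^\alpha)$ is the class in $L^k/\mm L^k$ of the monomial $x^{kd-v_\alpha}y^{\aa\cdot\alpha}$, where $v_\alpha:=\sum_i i\alpha_i$ and $kd-v_\alpha=\sum_i(d-i)\alpha_i$.

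For the left-hand side I would first note $H^1(L,k-1)=\dim_K(R/L^k)$, and then record that $L^k$ is again a lex-segment ideal: by the product rule of Section \ref{contra2} its $a$-sequence is $c=(c_0,\dots,c_{kd})$ with $c_i=\min\{a_{j_1}+\dots+a_{j_k}:j_1+\dots+j_k=i\}$, and this sequence $c$ is strictly increasing because $\aa$ is (given an index tuple realizing the minimum for $c_i$, lowering one entry that is $\ge 1$ by one produces an index tuple summing to $i-1$ whose $\aa$-value is strictly smaller, so $c_{i-1}<c_i$; formally, an induction on $k$ using the two-factor case). Consequently the $kd+1$ minimal generators of $L^k$ are exactly $x^{kd-i}y^{c_i}$ for $i=0,\dots,kd$, and $\dim_K(R/L^k)=\sum_{i=0}^{kd}c_i$ by the standard count $\dim_K R/J=\sum a_i(J)$ for monomial $\mm$-primary $J$; in other words, the left-hand side equals the sum of the $y$-exponents of the minimal generators of $L^k$.

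For the right-hand side, since $\aa\in C_I$ the monomials $t^\alpha$ with $\alpha\in M_k(I)$ are exactly the degree-$k$ standard monomials of $\ini_\aa(P)$ with respect to $\tau$, so their images under $\pi$ form a $K$-basis of $F(L)_k=L^k/\mm L^k$; in particular each $\pi(t^\alpha)$ is nonzero, which forces $x^{kd-v_\alpha}y^{\aa\cdot\alpha}$ to be a minimal generator of $L^k$, and distinctness of these classes forces the monomials themselves to be pairwise distinct. Since $\#M_k(I)=\dim_K F(L)_k=kd+1$ is the exact number of minimal generators of $L^k$ (Lemma \ref{Fshort}), the assignment $\alpha\mapsto x^{kd-v_\alpha}y^{\aa\cdot\alpha}$ is a bijection from $M_k(I)$ onto the minimal generators of $L^k$. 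Comparing this with the parametrization $x^{kd-i}y^{c_i}$ from the previous paragraph --- minimal generators of a monomial ideal in $K[x,y]$ have pairwise distinct $x$-exponents, so $\alpha\mapsto v_\alpha$ is a bijection onto $\{0,\dots,kd\}$ and $\aa\cdot\alpha=c_{v_\alpha}$ --- I obtain
$$\aa\cdot\sum M_k(I)=\sum_{\alpha\in M_k(I)}\aa\cdot\alpha=\sum_{i=0}^{kd}c_i=\dim_K(R/L^k)=H^1(L,k-1).$$

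The one place that needs an actual argument rather than bookkeeping is the assertion that $L^k$ is lex-segment, i.e.\ that the sequence $c$ above is strictly increasing when $\aa$ is; I expect the short induction on $k$ sketched above to settle it, and everything else follows by tracking identifications already in place. The auxiliary remark that $I$ and $\ini_\aa(P)$ share the same standard monomials needs only the observation that passing to a $\tau$-initial ideal leaves the complement monomials (a $K$-basis of the quotient) unchanged.
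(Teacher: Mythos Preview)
Your proof is correct and reaches the same conclusion as the paper's, but the paper's argument is considerably more direct. The paper simply observes that for $\alpha\in M_k(I)$ one has $\aa\cdot\alpha=\min\{a_{i_1}+\cdots+a_{i_k}:i_1+\cdots+i_k=v_\alpha\}$: indeed, for any other $\beta$ with $|\beta|=k$ and $v_\beta=v_\alpha$ the binomial $t^\alpha-t^\beta$ lies in $P$, and since $t^\alpha\notin I=\ini_{\tau\aa}(P)$ the leading term must be $t^\beta$, whence $\aa\cdot\beta\geq\aa\cdot\alpha$. Combined with the fact (from the bigrading of $S/P$) that each $v\in\{0,\dots,kd\}$ is hit by exactly one $\alpha\in M_k(I)$, summing and invoking Lemma~\ref{formH} finishes immediately.

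Your route recovers the identity $\aa\cdot\alpha=c_{v_\alpha}$ indirectly, by passing through the fiber-cone presentation, proving that $L^k$ is again lex-segment, and setting up a bijection between $M_k(I)$ and the minimal generators of $L^k$. This is fine, and the lex-segment argument you sketch (drop one index by $1$) works, but none of this extra structure is needed: Lemma~\ref{formH} already gives $H^1(L,k-1)=\sum_v c_v$ for any monomial ideal, lex-segment or not, so the fiber-cone detour and the lex-segment verification can be dropped entirely. What your packaging buys is a more conceptual explanation (the identity literally equates $y$-exponents of minimal generators of $L^k$); what the paper's buys is a three-line proof.
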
 

\begin{proof} 
Set $C_k=\aa \cdot \sum M_k(I) $. 
Writing $t^\alpha$ as $t_{j_1}\cdots t_{j_k}$, we may rewrite $C_k$ as the sum $a_{j_1}+\cdots +a_{j_{k}}$ over all monomials $t_{j_1}\cdots t_{j_{k}} \not\in I$. By
construction $a_{j_1}+\cdots +a_{j_k}=\min\{ a_{i_1}+a_{i_2}+\dots+ a_{i_k} :
i_1+i_2+\dots+i_k=j_1+j_2+\dots+j_k\}$ if and only if $t_{j_1}\cdots t_{j_{k}} \not\in I$. Therefore $C_k$ is the sum over all $v$, $0\leq v\leq kd$ of $\min\{ a_{i_1}+a_{i_2}+\dots+ a_{i_k} : i_1+i_2+\dots+i_k=v\}$. But this is exactly $H^1(L,k-1)$, see Lemma \ref{formH}. 
\end{proof} 

In terms of Hilbert series Lemma \ref{ancoratu} can be rewritten as in the following lemma. 

 \begin{lemma}\label{RuiCosta} Let $L$ be a monomial ideal with associated sequence $\aa$ belonging to $C_I$. Then 
$$H^1_L(z)=\aa \cdot \nabla H_{S/I}(\underline{t})_{t_i=z} $$
where $\nabla =(\partial/\partial t_0, \dots, \partial/\partial t_d)$ is the gradient operator. 
 \end{lemma}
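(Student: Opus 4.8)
The plan is to reinterpret Lemma \ref{ancoratu} through the language of generating functions, so that the numerical identity $H^1(L,k-1)=\aa\cdot\sum M_k(I)$ becomes an identity of formal power series. First I would recall that, by definition,
$$H_{S/I}(\underline t)=\sum_{t^\alpha\notin I} t^\alpha,$$
and that applying the partial derivative $\partial/\partial t_i$ has the effect of multiplying the coefficient of each monomial $t^\alpha$ by $\alpha_i$ (after lowering the $t_i$-exponent by one). Hence the $i$-th component of the gradient $\nabla H_{S/I}(\underline t)$, once we substitute $t_j=z$ for all $j$, becomes $\sum_{t^\alpha\notin I} \alpha_i\, z^{|\alpha|-1}$. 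Taking the dot product with $\aa$ then yields $\sum_{t^\alpha\notin I}(\aa\cdot\alpha)\,z^{|\alpha|-1}$.

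Next I would regroup this sum by total degree: collecting the terms with $|\alpha|=k$ gives $z^{k-1}\sum_{\alpha\in M_k(I)}\aa\cdot\alpha = z^{k-1}\,\bigl(\aa\cdot\sum M_k(I)\bigr)$, using that $M_k(I)=\{\alpha:\ t^\alpha\notin I,\ |\alpha|=k\}$ exactly as defined in Lemma \ref{ancoratu}. By that lemma, $\aa\cdot\sum M_k(I)=H^1(L,k-1)$ for every $k$, provided $\aa\in C_I$. Therefore
$$\aa\cdot\nabla H_{S/I}(\underline t)_{t_i=z}=\sum_{k\geq 1} H^1(L,k-1)\,z^{k-1}=\sum_{k\geq 0}H^1(L,k)\,z^{k}=H^1_L(z),$$
which is precisely the claimed formula. (One should note the degenerate term: monomials with $|\alpha|=0$, i.e. the constant $1$ in $H_{S/I}$, are killed by the gradient, which is consistent with the index shift; the $k=0$ term $H^1(L,0)=\dim_K R/L$ correctly appears from the $|\alpha|=1$ part.)

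The argument is essentially a bookkeeping translation, so there is no deep obstacle; the only point requiring a little care is making sure the degree shift and the substitution $t_i=z$ are applied in the right order, and that convergence/formal-power-series issues are handled — but since everything is a formal manipulation of power series with nonnegative integer coefficients (and $H_{S/I}(\underline t)$ is a well-defined element of the completion), this is routine. I would also remark that, just as with the multiplicity formula $e^I_0$ discussed before the lemma, the right-hand side depends only on the radical of $I$ (equivalently, only on the maximal cone of the secondary fan containing $\aa$), since $H_{S/I}(\underline t)$ and hence its gradient are determined by $I$, and $I$ is constant on $C_I$; so the ``same formula'' holds across all Gröbner cones sharing a common radical.
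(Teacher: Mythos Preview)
Your main argument is correct and is exactly the translation the paper intends: Lemma \ref{RuiCosta} is stated in the paper without a separate proof, simply as the generating-function reformulation of Lemma \ref{ancoratu}, and you have spelled out that reformulation cleanly.

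However, your closing remark is wrong. You assert that the right-hand side depends only on $\sqrt{I}$, i.e.\ that the same formula holds across all Gr\"obner cones sharing a common radical. This is false: the multigraded Hilbert series $H_{S/I}(\underline t)$ genuinely depends on $I$, not just on $\sqrt I$, and different monomial initial ideals of $P$ with the same radical produce different formulas for $H^1_L(z)$. Indeed, Proposition \ref{stessoh}(2) proves precisely that $h^I=h^J$ forces $I=J$; only the leading coefficient $e_0^I$ is determined by the radical (Proposition \ref{stessoh}(1)). Your own reasoning actually says this (``determined by $I$, and $I$ is constant on $C_I$''), but the conclusion you draw (``holds across all Gr\"obner cones sharing a common radical'') does not follow, since distinct cones in the same secondary cone correspond to distinct $I$'s. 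Drop that remark.
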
 

\begin{remark}\label{LuisFigo} \begin{itemize}
\item[(1)] The series $H_{S/I}(\underline{t})$ is rational and it can be described in terms of the multigraded Betti numbers 
$\beta_{i,\alpha}(S/I)=\dim_K \Tor_i^S(S/I,K)_\alpha:$ 
$$H_{S/I}(\underline{t})=\frac{\sum_{i, \alpha} (-1)^i t^{\alpha} \beta_{i,\alpha}(S/I)}{\Pi_{i=0}^d (1-t_i)} .$$
\item[(2)]
A rational expression of $H_{S/I}(\underline{t})$ can be computed also from a Stanley decomposition of $S/I$. For a monomial ideal $I$ a Stanley decomposition of $S/I$ is a finite set $\Omega$ of pairs $(\sigma,\tau)$ where $\sigma \in \NN^{d+1}$ and $\tau \subseteq \{0,\dots,d\}$ which induces a decomposition
$$S/I=\bigoplus_{(\sigma,\tau)\in \Omega}\ t^\sigma K[ t_i : i\in \tau]$$
as a $K$-vector space. Stanley decompositions always exist but they are far from being unique. There are algorithms to compute them, see \cite{MS} for more. 
For every Stanley decomposition $\Omega$ of $S/I$ clearly one has: 
$$H_{S/I}(\underline{t})=\sum_{(\sigma,\tau)\in \Omega} \frac{ t^\sigma}{\Pi_{i\in \tau} (1-t_i)}.$$ 
\end{itemize}
\end{remark}

Combining Lemma \ref{ancoratu}, Remark \ref{LuisFigo} with Lemma \ref{RuiCosta} we obtain the following result. 

\begin{corollary}\label{Maniche} For every cone $C_I$ and for a monomial ideal $L$ whose associated sequence $\aa$ belongs to $C_I$ we have 
$$H^1_L(z)= |\aa|\frac{1+(d-1)z}{(1-z)^3} + \aa \cdot \sum_{i\geq 1} (-1)^i \sum_{\alpha} \beta_{i,\alpha}(S/I)
\frac{ z^{|\alpha|-1}}{(1-z)^{d+1}} \alpha,$$
where the $\beta_{i,\alpha}(S/I) $ are the multigraded Betti numbers of $S/I$. Moreover
$$H^1_L(z)=\aa \cdot \sum_{(\sigma, \tau)\in \Omega} 
\frac{ z^{|\sigma|-1}}{(1-z)^{|\tau|+1}}
(z\tau+(1-z)\sigma),$$
where $\Omega$ is a Stanley decomposition of $S/I$ and where we have identified the subset $\tau$ with the corresponding $0/1$-vector. 
\end{corollary}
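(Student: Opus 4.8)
The plan is to assemble Corollary~\ref{Maniche} purely by substitution, treating Lemma~\ref{RuiCosta} as the master formula and feeding it the two rational expressions for $H_{S/I}(\underline{t})$ recorded in Remark~\ref{LuisFigo}. First I would recall that by Lemma~\ref{RuiCosta} we have $H^1_L(z)=\aa\cdot\nabla H_{S/I}(\underline t)\big|_{t_i=z}$, so everything reduces to computing the gradient of each rational expression for $H_{S/I}(\underline t)$ and then setting all $t_i=z$. The only genuine computation is differentiating a product/quotient of the factors $(1-t_i)$ and monomials $t^\alpha$, and then collecting terms; I would do this componentwise and then dot with $\aa$ at the very end.

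For the first (Betti-number) formula, I would start from
$$H_{S/I}(\underline t)=\frac{\sum_{i,\alpha}(-1)^i t^\alpha\beta_{i,\alpha}(S/I)}{\prod_{j=0}^d(1-t_j)}.$$
Separating the $i=0$ term (which is just $1/\prod_j(1-t_j)$, giving the Hilbert series of $S$ itself), its contribution to $\aa\cdot\nabla(\cdot)|_{t_i=z}$ is $|\aa|$ times the derivative of $1/(1-z)^{d+1}$ divided appropriately — more precisely, one checks that $\aa\cdot\nabla\big(\prod_j(1-t_j)^{-1}\big)\big|_{t_i=z}=|\aa|(1-z)^{-(d+1)}\cdot\frac{1}{1-z}$, and using the known Hilbert series $H_{S/P}(z)=(1+(d-1)z)/(1-z)^2$ of $S/P$ together with the fact that $S/I$ and $S/P$ have the same Hilbert function (Lemma in Section~\ref{notpre}), this $i=0$ piece can be repackaged as $|\aa|(1+(d-1)z)/(1-z)^3$. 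For $i\geq 1$, differentiating $(-1)^i t^\alpha\beta_{i,\alpha}/\prod_j(1-t_j)$ with respect to $t_\ell$ and evaluating at $t_i=z$ produces two kinds of terms: one from differentiating $t^\alpha$ (yielding $\alpha_\ell z^{|\alpha|-1}/(1-z)^{d+1}$) and one from differentiating the denominator (yielding $z^{|\alpha|}/(1-z)^{d+2}$). Dotting with $\aa$, the first kind gives exactly $\aa\cdot\alpha\, z^{|\alpha|-1}/(1-z)^{d+1}$; the second kind, when summed over $\ell$, contributes $|\aa| z^{|\alpha|}/(1-z)^{d+2}$ times $\sum_{i\geq1,\alpha}(-1)^i\beta_{i,\alpha}$. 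The point is that the alternating sum $\sum_i(-1)^i\beta_{i,\alpha}$ over all $i$ (including $i=0$) recovers the numerator of $H_{S/P}(z)$, so the ``denominator-derivative'' terms for $i\geq 1$ combine with the $i=0$ term to produce precisely the stated $|\aa|(1+(d-1)z)/(1-z)^3$, while the ``numerator-derivative'' terms give the displayed sum $\aa\cdot\sum_{i\geq1}(-1)^i\sum_\alpha\beta_{i,\alpha}(S/I)\,z^{|\alpha|-1}\alpha/(1-z)^{d+1}$.

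For the Stanley-decomposition formula I would argue analogously but more cleanly: from $H_{S/I}(\underline t)=\sum_{(\sigma,\tau)\in\Omega} t^\sigma/\prod_{i\in\tau}(1-t_i)$, differentiate each summand with respect to $t_\ell$. If $\ell\in\tau$ one gets a denominator-derivative term $t^\sigma/\big((1-t_\ell)\prod_{i\in\tau}(1-t_i)\big)$; if $\ell$ appears in $\sigma$ one gets a numerator-derivative term $\sigma_\ell t^{\sigma-e_\ell}/\prod_{i\in\tau}(1-t_i)$. Evaluating at $t_i=z$, the $\ell\in\tau$ terms sum (over $\ell$, weighted by $a_\ell$) to $(\aa\cdot\tau)\,z^{|\sigma|}/(1-z)^{|\tau|+1}$ and the $\sigma$-terms to $(\aa\cdot\sigma)\,z^{|\sigma|-1}/(1-z)^{|\tau|}$. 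Writing both over the common denominator $(1-z)^{|\tau|+1}$ and factoring out $z^{|\sigma|-1}$ yields $\aa\cdot\frac{z^{|\sigma|-1}}{(1-z)^{|\tau|+1}}\big(z\tau+(1-z)\sigma\big)$, summed over $(\sigma,\tau)\in\Omega$, which is exactly the second displayed formula.

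I do not anticipate a serious obstacle: both parts are routine applications of the quotient rule followed by bookkeeping. The only delicate point is the first formula, where one must be careful to show that the $i=0$ term and the denominator-derivative parts of the $i\geq1$ terms reassemble into the closed form $|\aa|(1+(d-1)z)/(1-z)^3$; this uses that $\sum_i(-1)^i\beta_{i,\alpha}$ summed appropriately reproduces the $h$-numerator of the common Hilbert series of $S/P$ and $S/I$, and that $H^1_L(z)$ for the particular $\aa=(0,1,\dots,d)$ (or any fixed $\aa$) is consistent — but this is forced, since Lemma~\ref{RuiCosta} is an identity of rational functions in $\aa$. So the main care is algebraic bookkeeping rather than any conceptual difficulty.
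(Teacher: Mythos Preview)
Your proposal is correct and follows exactly the approach the paper intends: the paper's own proof is a single line saying the corollary follows by combining Lemma~\ref{ancoratu}, Lemma~\ref{RuiCosta}, and Remark~\ref{LuisFigo}, and your plan is precisely the unpacking of that combination via the product/quotient rule applied to the two rational expressions for $H_{S/I}(\underline t)$. The one spot where your wording is slightly imprecise is when you first say the $i=0$ piece ``can be repackaged as'' $|\aa|(1+(d-1)z)/(1-z)^3$; as you yourself clarify a few lines later, that repackaging only happens after combining the $i=0$ term with the denominator-derivative contributions from $i\ge 1$, using $\sum_{i,\alpha}(-1)^i\beta_{i,\alpha}(S/I)z^{|\alpha|}=(1+(d-1)z)(1-z)^{d-1}$.
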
 

 The next proposition summarizes what we have proved so far concerning formulas for $h^I$ and related invariants. 

\begin{proposition}\label{Super} 
 Given a cone $C_I$ of the Gr\"obner fan of $P$ there are polynomials $h^I \in \ZZ[A,z]$ and $Q^I,Q_1^I \in \QQ[A,z]$ linear in the variables $A=A_0,\dots,A_d$ and without constant term, such that: 
\begin{itemize} 
\item[(1)] For every monomial ideal $L$ with associated sequence $\aa$ belonging to $C_I$ the polynomial $h^I$ evaluated at $A=\aa$ equals the $h$-polynomial of $L$, $Q^I$ evaluated at $A=\aa$ equals the Hilbert polynomial $P_L$ of $L$, and $Q_1^I$ evaluated at $A=\aa$ equals the Hilbert polynomial $P^1_L$ of $L$.
\item[(2)] $h^I, Q^I$ and $Q_1^I$ can be expressed in terms of the multigraded Betti numbers of $I$. They can also be expressed in terms of a Stanley decomposition of $S/I$. 
\item[(3)] In particular, 
$$h^I= |A|(1+(d-1)z) + A \sum_{i\geq 1} (-1)^i \sum_{\alpha} \beta_{i,\alpha}(S/I)
\frac{z^{|\alpha|-1}}{(1-z)^{d-2}} \alpha, $$
where $\beta_{i,\alpha}(S/I)$ are the multigraded Betti numbers of $S/I$ and 
$$h^I=A \cdot \sum_{(\sigma, \tau)\in \Omega} z^{|\sigma|-1}(1-z)^{2-|\tau|} 
\left( z \tau+(1-z)\sigma \right),
$$
where $\Omega$ is a Stanley decomposition of $S/I$. 
Explicit expressions for $Q^I$ and $Q_1^I$ can be obtained from that of $h^I$. 
\end{itemize} 
\end{proposition}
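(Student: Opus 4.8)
The plan is to assemble Proposition \ref{Super} entirely from the pieces established in the preceding lemmas, so the proof is mostly bookkeeping rather than new mathematics. First I would fix a cone $C_I$ of the Gr\"obner fan of $P$ with its monomial initial ideal $I$, and recall from Lemma \ref{RuiCosta} that for any monomial ideal $L$ with associated sequence $\aa \in C_I$ one has $H^1_L(z) = \aa \cdot \nabla H_{S/I}(\underline{t})|_{t_i = z}$. The point is that $\nabla H_{S/I}(\underline{t})|_{t_i=z}$ depends only on $I$ (not on $\aa$) and is a vector of rational functions in $z$ with denominator a power of $(1-z)$; pairing it with the variable vector $A = (A_0, \dots, A_d)$ produces a rational function linear in $A$ with no constant term. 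Clearing the denominator $(1-z)^{d+1}$ gives a polynomial in $\ZZ[A,z]$ (the numerator has integer coefficients because the multigraded Betti numbers are integers), and after dividing by the correct power of $(1-z)$ dictated by the Krull dimension one obtains $h^I$. I would then \emph{define} $Q^I$ and $Q^I_1$ to be the polynomials (linear in $A$, no constant term) obtained by the standard passage from the $h$-polynomial to the Hilbert polynomials $P_L$ and $P^1_L$ — this is a purely formal operation on rational Hilbert series and, since the passage is $\QQ$-linear in the coefficients of $h^I$, it preserves $A$-linearity and vanishing at $A=0$. That establishes (1).

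For part (2), the explicit formulas of Remark \ref{LuisFigo} give two rational expressions for $H_{S/I}(\underline{t})$: one in terms of the multigraded Betti numbers $\beta_{i,\alpha}(S/I)$ with denominator $\Pi_i(1-t_i)$, and one in terms of a Stanley decomposition $\Omega$ of $S/I$ with summands $t^\sigma/\Pi_{i\in\tau}(1-t_i)$. Applying $\nabla$ and setting all $t_i = z$ in each expression, then using Lemma \ref{RuiCosta} (equivalently Corollary \ref{Maniche}, which already records the outcome), yields the two displayed formulas for $H^1_L(z)$, and hence, after stripping off the Cohen-Macaulay part $|\aa|(1+(d-1)z)/(1-z)^3$ coming from $S/P$ and clearing $(1-z)$'s, the two displayed formulas for $h^I$ in (3). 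The differentiations are routine: $\partial_{t_j}(t^\alpha)|_{t=z} = \alpha_j z^{|\alpha|-1}$ accounts for the factor $z^{|\alpha|-1}\alpha$ in the Betti-number formula, and $\partial_{t_j}\big(t^\sigma/\Pi_{i\in\tau}(1-t_i)\big)|_{t=z}$ produces the combination $\sigma_j z^{|\sigma|-1}(1-z)^{-|\tau|-1} + [j\in\tau]\, z^{|\sigma|}(1-z)^{-|\tau|-2}$, which bundles into the stated $z\tau + (1-z)\sigma$ form. The claim that the result is a polynomial (not merely a rational function) in $z$ follows because $H^1_L(z)(1-z)^{n+1} = h(z)$ has the displayed denominator structure cancel against the dimension count, exactly as in Section \ref{notpre}; for the Betti-number expression the numerator's denominator is $(1-z)^{d+1}$ while $n+1 = 3$, leaving $(1-z)^{d-2}$ as written.

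Finally, for the statements about $Q^I$ and $Q^I_1$ I would remark only that they are obtained from $h^I$ by the usual formula expressing the Hilbert polynomial of a module of Krull dimension $n+1$ from its $h$-polynomial, namely $P^1_L(z) = \sum_j h_j \binom{n+1-j+z}{n+1}$ with $h^I = \sum_j h_j(A) z^j$, and $P_L = (1-\text{shift})P^1_L$; linearity of $h^I$ in $A$ transfers verbatim. The only genuinely substantive point — and the one I would be most careful about — is the integrality claim $h^I \in \ZZ[A,z]$ rather than merely $\QQ[A,z]$: this needs the Betti-number expression of $H_{S/I}(\underline{t})$ (whose numerator has integer coefficients) together with the observation that dividing by $(1-z)^{d-2}$ still yields an integral polynomial, which holds because $h^I$ is, for each integral $\aa \in C_I$, the genuine $h$-polynomial of a lex-segment ideal and hence has integer coefficients — so $h^I$, being a polynomial in $A$ that takes integer-vector values to integer-coefficient polynomials on a Zariski-dense set of points, must itself lie in $\ZZ[A,z]$. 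I would phrase the proof as: ``Combine Lemma \ref{ancoratu}, Lemma \ref{RuiCosta}, Corollary \ref{Maniche} and Remark \ref{LuisFigo}; the asserted $A$-linearity and absence of constant term are visible in the displayed formulas, and integrality of $h^I$ follows since for each $\aa \in C_I \cap W_d$ it evaluates to the $h$-polynomial of the lex-segment ideal $L$, which has integer coefficients.''
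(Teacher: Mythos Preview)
Your proposal is correct and matches the paper's approach: the paper offers no proof for this proposition, presenting it explicitly as a summary of Lemma \ref{ancoratu}, Lemma \ref{RuiCosta}, Remark \ref{LuisFigo}, and Corollary \ref{Maniche}, which is precisely the assembly you carry out. Your added details go beyond what the paper provides; one small remark is that your integrality argument for $h^I$ has a loose end (since every associated sequence has $a_0=0$, evaluations alone do not pin down the $A_0$-coefficient), but integrality follows more directly from the Stanley-decomposition formula once you observe that $|\tau|\le 2$ because $\dim S/I=2$, so each factor $(1-z)^{2-|\tau|}$ is already a polynomial.
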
 

 Similarly one has expressions for the Hilbert coefficients $e_i^I$ as a linear function in the variables $A$. 
Now we discuss the dependence of the polynomials $h^I$ and $Q_1^I$ on $I$. 

\begin{proposition}\label{stessoh}
Let $I,J$ be monomial initial ideals of $P$. Then 
\begin{itemize} 
\item[(1)] $e_{0}^I=e_{0}^J$ iff $\sqrt{I}=\sqrt{J}$.
\item[(2)] $h^I=h^J$ iff $I=J$. 
\item[(3)] $Q_1^I=Q_1^J$ iff $I$ and $J$ have the same saturation, equivalently, they coincide from a certain degree on. 
\end{itemize} 
\end{proposition}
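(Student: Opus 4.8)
The plan is to reduce all three equivalences to one observation about the standard monomials of a monomial initial ideal of $P$. If two monomials $t^{\beta},t^{\gamma}$ of $S$ have the same total degree $k$ and the same weight $\sum_i i\beta_i=\sum_i i\gamma_i=:b$, then they have the same image $x^{dk-b}y^{b}$ under the map of Definition \ref{P}, so $t^{\beta}-t^{\gamma}\in P$. Fix once and for all a weight vector $\aa$ with $\ini_{\aa}(P)=I$ (such $\aa$ exists; one may take $\aa\in W_d$). For this $\aa$, inside each fibre $F_{k,b}=\{\beta\in\NN^{d+1}:|\beta|=k,\ \sum_i i\beta_i=b\}$ the numbers $\aa\cdot\beta$ are pairwise distinct — otherwise $\ini_{\aa}(P)$ would not be a monomial ideal — and of any two monomials of $F_{k,b}$ the $\aa$-larger lies in $\ini_{\aa}(P)=I$; hence exactly one monomial of $F_{k,b}$ is standard for $I$, namely the one minimising $\aa\cdot\beta$, and this minimum is attained uniquely. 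I write $\mu_I(k,b)\in\NN^{d+1}$ for its exponent vector; as $b$ runs over $0,\dots,kd$ these exhaust the degree-$k$ standard monomials of $I$.

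\emph{Part (1).} By construction $e_0^I$ depends only on the sequence $\ii=(0=i_0<\dots<i_k=d)$ of the $j$ with $t_j\notin\sqrt I$, and conversely $\sqrt I=(t_j:j\notin\ii)+(t_{i_v}t_{i_r}:r-v>1)$ is recovered from $\ii$; thus $\sqrt I=\sqrt J$ gives $e_0^I=e_0^J$. For the converse, in $e_0^I=A_0(i_1-i_0)+\sum_{t=1}^{k-1}A_{i_t}(i_{t+1}-i_{t-1})+A_{i_k}(i_k-i_{k-1})$ every coefficient is strictly positive and the $i_t$ are distinct, so the set of variables occurring in $e_0^I$ is exactly $\{i_0,\dots,i_k\}=\ii$; hence $e_0^I=e_0^J$ forces $\ii_I=\ii_J$, and so $\sqrt I=\sqrt J$.

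\emph{Part (2).} By Lemma \ref{ancoratu}, for $\aa\in C_I$ one has $H^1(L,k-1)=\aa\cdot\sum_{b=0}^{kd}\mu_I(k,b)$, and by Proposition \ref{Super}(1) this quantity is the coefficient of $z^{k-1}$ in $h^I(\aa,z)/(1-z)^3$. Hence, evaluating the assumed identity $h^I=h^J$ at $A=\aa$ for the fixed $\aa$ above and comparing coefficients yields, for each $k$,
$$\sum_{b}\aa\cdot\mu_J(k,b)\ =\ \sum_{b}\aa\cdot\mu_I(k,b)\ =\ \sum_{b}\min_{\beta\in F_{k,b}}\aa\cdot\beta ,$$
the last equality being the minimality property of $\mu_I$ recorded above. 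For every $b$ one has $\aa\cdot\mu_J(k,b)\ge\min_{\beta\in F_{k,b}}\aa\cdot\beta$, and since the outer sums coincide, equality must hold term by term; as the minimum over $F_{k,b}$ is attained \emph{only} at $\mu_I(k,b)$, we obtain $\mu_J(k,b)=\mu_I(k,b)$ for all $k$ and $b$. Thus $I$ and $J$ have the same standard monomials in every degree, i.e. $I=J$; the reverse implication is trivial. The one step here that is not pure bookkeeping is this passage from an equality of sums to equality of summands, and it works precisely because the left-hand sums run over \emph{values} $\aa\cdot\mu_J(k,b)$ of genuine monomials of the fibre while the right-hand sums run over the fibrewise \emph{minima}, each of which is attained uniquely for our choice of $\aa$.

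\emph{Part (3).} This is the same argument carried out in large degree. Since $Q_1^I(\aa,\cdot)$ is the Hilbert polynomial $P^1_L$, for $\aa\in C_I$ and $m\gg0$ we have $Q_1^I(\aa,m)=H^1(L,m)=\aa\cdot\sum_b\mu_I(m+1,b)$; because $Q_1^I$ is linear in $A$ and $C_I$ is a full-dimensional cone, for each fixed $m\gg0$ this identity extends from $C_I$ to all of $\QQ^{d+1}$, and likewise for $J$. If $Q_1^I=Q_1^J$, then applying the term-by-term argument of Part (2) at the fixed $\aa$ and at each large $m$ gives $\mu_I(m+1,b)=\mu_J(m+1,b)$ for all $b$ and all $m\gg0$, that is, $I_e=J_e$ for $e\gg0$; this is exactly the statement that $I$ and $J$ agree in all sufficiently high degrees, which is standardly equivalent to $I^{\mathrm{sat}}=J^{\mathrm{sat}}$. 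Conversely, if $I_e=J_e$ for $e\gg0$ then, the weight being intrinsic to a monomial, the families $(\mu_I(e,b))_b$ and $(\mu_J(e,b))_b$ coincide for $e\gg0$, whence $Q_1^I(\aa,m)=Q_1^J(\aa,m)$ for all $\aa$ and all $m\gg0$, and therefore $Q_1^I=Q_1^J$.
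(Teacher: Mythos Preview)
Your proof is correct. Part (1) matches the paper's. For Parts (2) and (3), however, you take a different route from the paper.

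The paper argues as follows for (2): since $h^I$ is linear in $A$ without constant term, the coefficient of $z^{k-1}$ in $h^I/(1-z)^3$ is the linear form $A\cdot\sum M_k(I)$; hence $h^I=h^J$ as elements of $\ZZ[A,z]$ forces $\sum M_k(I)=\sum M_k(J)$ as vectors in $\NN^{d+1}$ for every $k$. At this point the paper invokes \cite[Cor.~2.7]{St} to conclude $I=J$. Part (3) is then the same argument restricted to large $k$.

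You instead evaluate $h^I=h^J$ at a single weight $\aa$ with $\ini_\aa(P)=I$, and exploit that the standard monomial of $I$ in each bigraded fibre $F_{k,b}$ is the \emph{unique} $\aa$-minimiser. The equality $\sum_b\aa\cdot\mu_J(k,b)=\sum_b\aa\cdot\mu_I(k,b)$ together with the termwise inequality $\aa\cdot\mu_J(k,b)\geq \aa\cdot\mu_I(k,b)$ and uniqueness of the minimum then pins down $\mu_J(k,b)=\mu_I(k,b)$ directly. This is a clean, fully self-contained argument that avoids the appeal to Sturmfels' result; the price is that the step ``the coefficient of $z^{k-1}$ in $h^J(\aa,z)/(1-z)^3$ equals $\sum_b\aa\cdot\mu_J(k,b)$'' uses the linearity of $h^J$ in $A$ (since $\aa\notin C_J$ in general), which you spell out for Part (3) but leave implicit in Part (2). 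The paper's approach, by contrast, extracts the stronger vector identity $\sum M_k(I)=\sum M_k(J)$ before concluding, but then needs the external reference to finish.
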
 

\begin{proof} Denote by $A$ the vector of variables $(A_0,\dots,A_d)$. We have discussed already the fact that the formula for the multiplicity $e_{0}^I$ identifies and it is identified by the radical of $I$. For statement (2), we have already seen that the coefficient $C_k$ of $z^k$ in the series $h^I/(1-z)^3$ is exactly $A\cdot \sum M_{k+1}(I)$. 
Hence $h^I=h^J$ holds if and only if $A \cdot \sum M_{k}(I) =A \cdot \sum M_{k}(J)$ for all $k$, that is, $\sum M_{k}(I)= \sum M_{k}(J)$ as vectors for every $k$. By virtue of \cite[Corollary 2.7]{St}, we conclude that $h^I=h^J$ implies $I=J$. For (3) one just applies the same argument to all large degrees. 
\end{proof} 
For an ideal $I$ of dimension $v$ we denote by $I^{top}$ the component of dimension $v$ of $I$, that is, the intersection of the primary components of $I$ of dimension $v$. 

\begin{remark} \label{toppo}
Let $I,J$ monomial initial ideals of $P$. In terms of $M_k(I)$ the condition 
$Q^I=Q^J$ is equivalent to $\sum M_k(I)-\sum M_{k-1}(I)=\sum M_k(J)-\sum M_{k-1}(J)$ for all $k>>0$. 
There is some computational evidence that $Q^I=Q^J$ could be equivalent to $I^{top}=J^{top}$. This is related with the hypergeometric Gr\"obner fan of $P$, see \cite[Section 3.3]{SST}. In particular Example 3.3.7 in 
\cite{SST} discusses the secondary fan, the hypergeometric fan and Gr\"obner fan of $P$ for $d=4$.
 \end{remark} 
 
\section{The big Cohen-Macaulay cone}\label{bigcone}

Starting from $d=3$, the Cohen-Macaulay region $CM_d$ it is not a cone (i.e. it is not convex), see Section \ref{smalld} for examples. However, a bunch of the cones $C(\ii)$ get together to form a big cone. 

\begin{proposition}\label{bCM}
Let $B_d=\cup_{\ii} \ol{C(\ii)}$ where the union is extended to all the sequences $\ii=\{0=i_0<i_1<\dots<i_k=d\}$ such that $i_{v}-i_{v-1}\leq 2$ for all $v=1,\dots,k$. 
Then $B_d$ is the closed cone described, in terms of the $b_i$'s, by the inequalities $b_j\leq b_{j+2}$ for all $j=1,\dots,d-2$. 
\end{proposition}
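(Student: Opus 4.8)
The plan is to prove the two inclusions $B_d\subseteq\{b:\,b_j\le b_{j+2},\ j=1,\dots,d-2\}$ and $\{b:\,b_j\le b_{j+2}\ \forall j\}\subseteq B_d$ separately, working throughout in the coordinates $b_i=a_i-a_{i-1}$ and with the defining inequalities of $\ol{C(\ii)}$ from Definition \ref{thecone}. For the first inclusion, fix $\ii=(0=i_0<\dots<i_k=d)$ with all gaps $i_v-i_{v-1}\le 2$, a point $\aa\in\ol{C(\ii)}$, and $j$ with $1\le j\le d-2$. Since every block $[i_{v-1}+1,i_v]$ has length $\le 2$, either $j=i_v$ for some $v$, or $j=i_{v-1}+1$ with $i_v=j+1$; so one of $j,j+1$ is some $i_w$. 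Apply the inequality of Definition \ref{thecone} to the pair $(s,r)=(j-1,j+2)$: if $j=i_v$ it reads $a_{j-1}+a_{j+2}\ge a_{i_v}+a_{2j+1-i_v}=a_j+a_{j+1}$, and the range condition $2i_v\le s+r\le i_v+i_{v+1}$ becomes $2j\le 2j+1\le j+i_{v+1}$, valid since $i_{v+1}\ge j+1$; if $j+1=i_{v+1}$ the second-type inequality reads $a_{j-1}+a_{j+2}\ge a_{i_{v+1}}+a_j=a_{j+1}+a_j$, with range condition $i_v+i_{v+1}\le 2j+1\le 2i_{v+1}$, again valid. Either way $a_{j-1}+a_{j+2}\ge a_j+a_{j+1}$, i.e.\ $b_j\le b_{j+2}$; hence $B_d=\cup_\ii\ol{C(\ii)}$ lies in the stated cone.

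For the reverse inclusion I would first record, by direct inspection of Definition \ref{thecone} (equivalently of the reduced Gr\"obner basis in Theorem \ref{riassunto}(5)), the following explicit shape of the relevant cones: \emph{if $\ii$ has all gaps $\le 2$, then $\aa\in\ol{C(\ii)}$ if and only if (A) $b_p\ge b_{p+1}$ for every length-$2$ block $\{p,p+1\}$, and (B) for every pair of consecutive blocks $\beta$ (earlier, with right endpoint $p$) and $\beta'$ (with left endpoint $q_0$): $b_q\ge b_p$ for every index $q$ of $\beta'$, and $b_{q_0}\ge b_{p'}$ for every index $p'$ of $\beta$.} Each binomial $\underline{t_st_r}-t_{i_v}t_{s+r-i_v}$ of Theorem \ref{riassunto}(5) contributes the inequality $a_s+a_r\ge a_{i_v}+a_{s+r-i_v}$, which rewrites as a comparison of two sums of equally many consecutive $b_i$'s and is then seen to be a consequence of (A)--(B); conversely each inequality in (A)--(B) is such a contribution. (In particular $\ol{C(\ii)}\subseteq\{b_j\le b_{j+2}\}$ drops out again.)

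Granting this, I would prove the inclusion by induction on $d$ (the cases $d\le 2$ being immediate), peeling off the top variable. Let $\aa=(a_0,\dots,a_d)$ satisfy $b_j\le b_{j+2}$ for all $j$. If $b_{d-1}\le b_d$, apply induction to $(a_0,\dots,a_{d-1})$ to obtain a gap-$\le 2$ sequence $\ii'$ (ending at $d-1$) with $\aa'\in\ol{C(\ii')}$ and set $\ii=\ii'\cup\{d\}$, a new length-$1$ block $\{d\}$; if $b_{d-1}>b_d$, apply induction to $(a_0,\dots,a_{d-2})$ to get $\ii''$ (ending at $d-2$) and set $\ii=\ii''\cup\{d\}$, a new length-$2$ block $\{d-1,d\}$, legitimate since $b_{d-1}>b_d$. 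In both cases $\ii$ has all gaps $\le 2$, and by the description above the inequalities (A)--(B) for $\ii$ not already known are just (A) for the new block (if length $2$) and (B) for it against the previous block; these involve only $b_{d-3},\dots,b_d$ and follow from $b_j\le b_{j+2}$ together with the case hypothesis (e.g.\ in the second case $b_{d-2}\le b_d<b_{d-1}$ forces $b_{d-2}<b_{d-1}$, while $b_d\ge b_{d-2}$ and $b_{d-1}\ge b_{d-3}$ are instances of $b_j\le b_{j+2}$). Hence $\aa\in\ol{C(\ii)}\subseteq B_d$. The step I expect to be the main obstacle is the explicit $b_i$-description of $\ol{C(\ii)}$ in the middle paragraph: Definition \ref{thecone} supplies one inequality per ordered pair $(s,r)$, and one must verify that, exactly when all gaps are $\le 2$, this whole family is generated by the short list (A)--(B); this is the only place the size-$\le 2$ combinatorics is really used, and it is what makes both the first inclusion transparent and the peeling step of the induction clean.
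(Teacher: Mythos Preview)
Your first inclusion is exactly the paper's argument: for each $j$, one of $j,j+1$ lies in $\ii$, and the inequality with $(s,r)=(j-1,j+2)$ gives $b_j\le b_{j+2}$.

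For the reverse inclusion you take a genuinely different route from the paper, and it is worth comparing. The paper does not attempt any explicit $b$-description of the individual cones $\ol{C(\ii)}$. Instead it constructs $\ii$ in one stroke as $\{0,\dots,d\}\setminus U$ with $U=\{j:b_j>b_{j+1}\}$ (this is exactly the sequence your induction builds, unwound), observes that $b_j\le b_{j+2}$ gives $a_s+a_r\ge a_{s-2}+a_{r+2}$ whenever $s-r\ge 2$ (just add two consecutive hypotheses), and from this reads off directly that for every $m$ the minimum of $a_s+a_r$ over $s+r=m$ is attained at the standard monomial prescribed by $\ii$; membership in $\ol{C(\ii)}$ follows immediately. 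So the paper replaces your (A)--(B) lemma plus induction by a single two-line observation.

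Your approach is correct in outline, and your (A)--(B) description of $\ol{C(\ii)}$ for step-$\le 2$ sequences is true, but you are right to flag it as the crux: as written, the sentence ``rewrites as a comparison of two sums of equally many consecutive $b_i$'s and is then seen to be a consequence of (A)--(B)'' hides the real work. Concretely, a generator $t_st_r$ with $s,r$ in non-adjacent closed intervals produces an inequality like $b_{u+1}+\cdots+b_r\ge b_{s+1}+\cdots+b_w$ (equal length), and to deduce this from (A)--(B) one must chain the consecutive-block inequalities across all intermediate blocks --- essentially the same telescoping the paper uses. Once you try to write this cleanly you will likely rediscover the paper's shortcut; the induction on $d$ then becomes unnecessary, since the paper's direct verification already covers all $d$ at once. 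Your peeling argument is fine given (A)--(B), but it is paying twice for the same combinatorics.
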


\begin{proof} Let $B'$ be the cone described by the inequalities $b_j\leq b_{j+2}$ for all $j=1,\dots,d-2$. We have to show that $B_d=B'$. For the inclusion $\subseteq$, let $\aa\in B_d$ and $b_j=a_j-a_{j-1}$. Then $\aa\in \ol{C(\ii)}$ for a sequence $\ii=\{0=i_0<i_1<\dots<i_k=d\}$ such that $i_{v}-i_{v-1}\leq 2$ for all $v=1,\dots,k$. 
For every $j$, $1\leq j\leq d-2$, at least one among $j$ and $j+1$ is in $\ii$. We distinguish two cases: 

Case (1) $j\in \ii$, say $j=i_v$. Then set $s=j-1$ and $r=j+2$. We have $2i_v\leq s+r\leq i_v+i_{v+1}$ and so, by Theorem \ref{riassunto} (5), $a_s+a_r\geq a_{i_v}+a_{s+r-i_v}$ is one of the defining inequalities of $\ol{C(\ii)}$. Explicitly, it says $a_{j-1}+a_{j+2}\geq a_{j}+a_{j+1}$, i.e., $b_j\leq b_{j+2}$. 

Case (2) $j+1\in \ii$, say $j+1=i_{v+1}$. Then set $s=j-1$ and $r=j+2$. We have $i_v+i_{v+1}\leq s+r\leq 2i_{v+1}$ and so, by Theorem \ref{riassunto} (5) $a_s+a_r\geq a_{i_{v+1}}+a_{s+r-i_{v+1}}$ is one of the defining inequalities of $\ol{C(\ii)}$. Explicitly, $a_{j-1}+a_{j+2}\geq a_{j}+a_{j+1}$, i.e., $b_j\leq b_{j+2}$.

For the inclusion $\supseteq$, let $\aa\in B'$ and $b_i=a_i-a_{i-1}$. Set $U=\{ j : 1\leq j\leq d-1 : b_j>b_{j+1}\}$. Since $b_j\leq b_{j+2}$ for all $j$, $U$ does not contain pairs of consecutive numbers. Set 
$$\ii=\{0,1,\dots,d\}\setminus U=(0=i_0<\dots<i_k=d).$$
Note that for all $0\leq r,s\leq d$ such that $s-r\geq 2$ one has 
$b_s+b_{s-1}\geq b_{r+1}+b_r$
and hence 
$$a_s+a_r\geq a_{s-2}+a_{r+2}.$$
This fact, together with the definition of $U$, implies that for all 
$0\leq r,s\leq d$ one has: 

$$a_s+a_r\geq 
\left\{
\begin{array}{ll}
a_j+a_{j+1} & \mbox{ if } s+r=2j+1\\
2a_j & \mbox{ if } s+r=2j \mbox{ and } j\in \ii\\
a_{j-1}+a_{j+1} & \mbox{ if } s+r=2j \mbox{ and } j\not\in \ii
\end{array}
\right.
$$
Using this information one proves directly that $\aa$ satisfies the 
inequalities of Theorem \ref{riassunto} (5) defining $\ol{C(\ii)}$. 
\end{proof}

\begin{remark} 
\begin{itemize}
\item[(1)] The number $f_d$ of the cones $\ol{C(\ii)}$ appearing in the description of $B_d$ satisfies the recursion $f_d=f_{d-1}+f_{d-2}$ with $f_1=1$ and $f_2=2$. Hence $f_d$ is the $(d+1)$-th Fibonacci number. 
\item[(2)] One also has $B_d=\cup \ol{C_\sigma}$ where $\sigma\in S_d$ satisfies $\sigma(j)<\sigma(j+2)$ for $j=1,\dots,d-2$. There are $\binom{d}{\lfloor d/2 \rfloor}$ such permutations. 
\item[(3)] Indeed, each cone $\ol{C(\ii)}$ appearing in the description of $B_d$ is the union of 
permutation cones $\ol{C_\sigma}$. Precisely, the permutations involved are those $\sigma\in S_d$ such that $\sigma(j)<\sigma(j+2)$ for all $j=1,\dots,d-2$ and such that $\sigma(j)>\sigma(j+1)$ iff $j\not\in \ii$. The number of these permutations, say $n(\ii)$, is a product of Catalan numbers. Recall that the $n$-th Catalan number is $c(n)=(n+1)^{-1}\binom{2n}{n}$. Decompose $\{1,\dots,d\}\setminus \ii$ as a disjoint union $\cup_{i=1}^t V_i$ where the $V_i$ are of the form $\{a,a+2,\dots\}$ and are maximal. Then $n(\ii)=c(|V_1|)\cdots c(|V_t|)$. For instance, if $\ii=(0,2,3,5,7,9,10,12,14)$ then $\{1,\dots,14\}\setminus \ii=\{1\}\cup \{4,6,8\}\cup \{11,13\}$ and hence $\ol{C(\ii)}$ is the union of $c(1)c(3)c(2)=10$ permutation cones. 
\item[(4)] The family $B_d$ with $d\in \NN$ is closed under multiplication, that is, if $\aa\in B_d$ and $\aa'\in B_e$ and $\cc=\aa\cdot \aa'$, then $\cc \in B_{d+e}$. Set $\cc=(c_0,c_1,\dots,c_{d+e})$. To show that $\cc\in B_{d+e}$ one has to prove that $c_{j+2}-c_{j+1}\geq c_{j}-c_{j-1}$, that is, 
$$c_{j+2}+c_{j-1}\geq c_{j+1}+c_{j}.$$ 
By definition, $c_{j+2}=a_v+a'_u$ with $v+u=j+2$ and $c_{j-1}=a_w+a'_z$ with $w+z=j-1$. 
Since $(v-w)+(u-z)=(v+u)-(w+z)=3$ we may assume that $v-w\geq 2$. 
Then $a_v+a_w\geq a_{v-2}+a_{w+2}$ and hence 
$c_{j+2}+c_{j-1}=a_v+a'_u+a_w+a'_z\geq a_{v-2}+a'_u+a_{w+2}+a'_z\geq c_j+c_{j+1}$. 
\end{itemize} 
\end{remark}

\section{Examples with small $d$}\label{smalld}

In this section we describe, for small $d$, the Gr\"obner fan, the Cohen-Macaulay region, and give formulas for the Hilbert series associated to the various cones. For simplicity, the cones will be described in terms of $b_1,\dots,b_d$ where $b_i=a_i-a_{i-1}$. \medskip 

For $d=1$, there is no much to say. The ideal $P$ is $0$, $CM_1=\QQ_{\geq 0}^2$ and $CM_1\cap W_1=\{ (0,a)\in \NN^2 : a>0\}$. For $d=2$ the Gr\"obner fan has two maximal cones, both Cohen-Macaulay. The lex cone $C(0,1,2)$ described by $b_1\leq b_2$ and the revlex cone $C(0,2)$ described by $b_1\geq b_2$. 

For $d=3$ the Gr\"obner fan has $8$ maximal cones, $4$ of them are Cohen-Macaulay and $4$ have depth $1$. We show below the cones. Each table shows:

\begin{itemize} 
\item[(1)] an initial ideal $I$ of $P$,
\item[(2)] the linear inequalities defining the corresponding cone in the Gr\"obner fan,
 \item[(3)] the coefficients of the $h$-vector of $\gr_L(R)$ for the ideal $L$
 corresponding to points $(a_0,a_1,a_2,a_3)$ in the cone. 
 \end{itemize} 
 
 The expressions of the $h$-vectors have been computed using Stanley decompositions and the formula in Lemma \ref{Maniche}. The Stanley decompositions have been computed by the algorithm presented in \cite{MS}. Cones (a),(b),(c) and (d) are Cohen-Macaulay cones. In particular (a) is the lex cone and (d) is the revlex cone. The union of (a),(b) and (c) is the big cone $B_3$ and it is defined by $b_1\leq b_3$. The revlex cone (d) is isolated, it intersects $B_3$ only at $b_1=b_2=b_3$. In particular the Cohen-Macaulay region is not a cone. 
 
 $$
\begin{array}{rl}
\boxed{\begin{array}{lllll} 
(a) \ (t_1t_3, t_0t_3, t_0t_2),\\ 
b_1\leq b_2\leq b_3\\
 (h_0)\ \ a_0 + a_1 + a_2 + a_3 \\
 (h_1)\ \ a_1 + a_2 
 \end{array} } &
\boxed{\begin{array}{lllll}
(b)\ (t_1t_3, t_0t_3, t_1^2)\\
 b_2\leq b_1\leq b_3\\
 (h_0) \ \ a_0 + a_1 + a_2 + a_3 \\ 
 (h_1)\ \ a_0 - a_1 + 2a_2 
 \end{array} } \\ \\
\boxed{\begin{array}{lllll}
 (c)\ (t_2^2, t_0t_3, t_0t_2),\\ 
 b_1\leq b_3\leq b_2,\\
 (h_0)\ \ a_0 + a_1 + a_2 + a_3\\
 (h_1)\ \ 2a_1 - a_2 + a_3 
 \end{array} } & 
\boxed{\begin{array}{lllll}
 (d)\ (t_2^2, t_1t_2, t_1^2),\\ 
 b_3\leq b_2\leq b_1, \\
 (h_0)\ \ a_0 + a_1 + a_2 + a_3\\ 
 (h_1)\ \ 2a_0 - a_1 - a_2 + 2a_3 
\end{array} } \\ \\ 
\end{array} $$
The non-Cohen-Macaulay cones are: 
$$
\begin{array}{rl}
\boxed{\begin{array}{lllll}
 (e)\ (t_2^2, t_1t_2, t_0t_2, t_0^2t_3),\\ 
 b_3\leq b_1\leq b_2 \mbox{ and } b_3+b_2\geq 2b_1, \\ 
 (h_0) \ \ a_0 + a_1 + a_2 + a_3 \\
 (h_1) \ \ a_0 + a_1 - 2a_2 + 2a_3 \\
 (h_2) \ \ -a_0 + a_1 + a_2 - a_3 
 \end{array} } & 
\boxed{\begin{array}{lllll}
(f)\ (t_1t_3, t_1t_2, t_1^2, t_0t_3^2),\\ 
b_2\leq b_3\leq b_1 \mbox{ and } b_1+b_2\leq 2b_3, \\
(h_0)\ \ a_0 + a_1 + a_2 + a_3 \\
(h_1) \ \ 2a_0 - 2a_1 + a_2 + a_3 \\
(h_2) \ \ -a_0 + a_1 + a_2 - a_3 
\end{array} }
\\ \\ 
\boxed{\begin{array}{lllll}
(g)\ (t_1t_3, t_1t_2, t_1^2, t_2^3),\\ 
 b_2\leq b_3\leq b_1 \mbox{ and } b_1+b_2\geq 2b_3, \\
 (h_0)\ \ a_0 + a_1 + a_2 + a_3 \\
 (h_1)\ \ 2a_0 - 2a_1 + a_2 + a_3 \\
 (h_2)\ \ a_1 - 2a_2 + a_3 
\end{array} } & 
\boxed{\begin{array}{lllll}
(h)\ (t_2^2, t_1t_2, t_0t_2, t_1^3),\\
 b_3\leq b_1\leq b_2 \mbox{ and } b_3+b_2\leq 2b_1,\\
 (h_0)\ \ a_0 + a_1 + a_2 + a_3 \\
 (h_1)\ \ a_0 + a_1 - 2a_2 + 2a_3 \\
 (h_2)\ \ a_0 - 2a_1 + a_2 
 \end{array} }
 \end{array} $$

For $d=4$ there are $42$ cones of the Gr\"obner fan, $10$ of them have depth $0$, $24$ have depth $1$ and $8$ are Cohen-Macaulay. The big Cohen-Macaulay cone is the union of $5$ of the $8$ Cohen-Macaulay cones. The remaining $3$ are isolated. The following example illustrates Proposition \ref{stessoh} and Remark \ref{toppo}. The ideals $I,J$ below are non-Cohen-Macaulay initial ideals of $P$. They satisfy $Q^I=Q^J$ and $Q_1^I\neq Q_1^J$. We display the ideals and the formulas for the coefficients $e_0,e_1, e_2$ that have been computed via Stanley decompositions. 
 
 $$
\begin{array}{ll}
\boxed{\begin{array}{lllll}
&I& (t_1t_3, t_1t_2, t_0t_2, t_3^3, t_1^2t_4, t_1^3, t_2t_4, t_2t_3, t_2^2) \\
& (e_0) & 4a_0 + 4a_4\\
& (e_1) & 3a_0 - a_1 - 3a_3 + 4a_4\\
& (e_2) & -a_0 + 2a_1 - 2a_3 + a_4
 \end{array} }
\\ \\
\boxed{\begin{array}{lllll}
&J & (t_1t_3, t_1t_2, t_1^2, t_3^3, t_2t_4, t_2t_3, t_2^2)\\
&(e_0) & 4a_0 + 4a_4\\
&(e_1) & 3a_0 - a_1 - 3a_3 + 4a_4\\
&(e_2) & a_2 - 2a_3 + a_4 \end{array} }
\end{array}$$ 
\medskip

In this case $I^{top}=J^{top}= (t_1t_3, t_2, t_3^3, t_1^2)$ as expected by Remark \ref{toppo} and 
$J=J^{sat}\neq I^{sat}=(t_2, t_1t_3, t_1^2t_4, t_3^3, t_1^3)$ as proved in Proposition \ref{stessoh}

\end{document}